\theoremstyle{plain}
\newtheorem{theorem}{Theorem}[section]
\newtheorem*{theorem*}{Theorem}
\newtheorem{proposition}[theorem]{Proposition}
\newtheorem{lemma}[theorem]{Lemma}
\newtheorem{corollary}[theorem]{Corollary}
\theoremstyle{remark}
\theoremstyle{definition}
\newcommand{\AX}{A_0(X)}
\newcommand{\Cf}{\textit{cf.}\;}
\newcommand{\CH}{\operatorname{CH}}
\newcommand{\CHX}{\CH_0(X)}
\newcommand{\E}{\mathcal{E}}
\newcommand{\Ehat}{\wh{E}}
\newcommand{\Fp}{\mathbb{F}_p}
\newcommand{\FK}{F(K)}
\newcommand{\FmK}{F^m(K)}
\newcommand{\FmppK}{F^{m+1}(K)}
\newcommand{\Gal}{\operatorname{Gal}}
\newcommand{\Gm}{\mathbb{G}_{m}}
\newcommand{\GmK}{G^m(K)}
\newcommand{\Gmpp}{G^{m+1}}
\newcommand{\GmpK}{G^{mp}(K)}
\newcommand{\GmtK}{G^{m+t}(K)}
\newcommand{\Gmphi}{G^m(\phi)}
\newcommand{\Gmppphi}{\Gmpp(\phi)}
\newcommand{\Gmhat}{\wh{\mathbb{G}}_{m}}
\newcommand{\gr}{\operatorname{gr}}
\newcommand{\grm}{\gr^m}
\newcommand{\grmp}{\gr^{mp}}
\newcommand{\grmt}{\gr^{m+t}}
\newcommand{\grto}{\gr^{t/(p-1)}}
\newcommand{\grtto}{\gr^{t + t/(p-1)}}
\newcommand{\grmF}{\grm(F)}
\newcommand{\grtoF}{\grto(F)}
\newcommand{\grmG}{\grm(G)}
\newcommand{\grmpG}{\grmp(G)}
\newcommand{\grmtG}{\grmt(G)}
\newcommand{\grttoG}{\grtto(G)}
\newcommand{\grmphi}{\grm(\phi)}
\newcommand{\grttophi}{\grtto(\phi)}
\newcommand{\grd}{\gr(\delta)}
\renewcommand{\Im}{\operatorname{Im}}
\newcommand{\isomto}{\stackrel{\simeq}{\longrightarrow}}
\newcommand{\Kbar}{\,\overline{\!K\!}\,}
\newcommand{\Ker}{\operatorname{Ker}}
\newcommand{\Kt}{K^{\times}}
\newcommand{\m}{\mathfrak{m}}
\newcommand{\mK}{\m_K}
\newcommand{\N}{\mathbb{N}}
\renewcommand{\O}{\mathcal{O}}
\newcommand{\onto}[1]{\stackrel{#1}{\to}}
\newcommand{\OK}{\O_K}
\newcommand{\OKT}{\OK[\![T]\!]}
\newcommand{\OKtimes}{\OK^{\times}}
\newcommand{\OKbar}{\O_{\Kbar}}
\newcommand{\ol}[1]{\overline{#1}}
\newcommand{\phihat}{\wh{\phi}}
\newcommand{\Qp}{\mathbb{Q}_{p}}
\newcommand{\Res}{\mathrm{Res}}
\newcommand{\ur}{\mathrm{ur}}
\newcommand{\ve}{\varepsilon}
\newcommand{\wt}[1]{\widetilde{#1}}
\newcommand{\wh}[1]{\widehat{#1}}
\newcommand{\Z}{\mathbb{Z}}
\newcommand{\Coker}{\operatorname{Coker}}
\newcommand{\grmKq}{\grm \Kq}
\newcommand{\grmkqn}{\grm\kqn}
\newcommand{\Kq}{K_q}
\newcommand{\KqM}{K_q^M}
\newcommand{\kqn}{k_{q,n}}
\newcommand{\KqMK}{\KqM(K)}
\newcommand{\UmKq}{U^m \Kq}
\newcommand{\UmppKq}{U^{m+1}\Kq}
\newcommand{\Umkqn}{U^m\kqn}
\newcommand{\Umppkqn}{U^{m+1}\kqn}
\def\sn{\smallskip\noindent}
\def\ssm{\smallsetminus}
\title{On the cycle map for products of elliptic curves 
over a $p$-adic field}
\author{Toshiro Hiranouchi and Seiji Hirayama}
\begin{document}
\maketitle
\begin{abstract}
We study the Chow group of $0$-cycles 
on the product of elliptic curves over a $p$-adic field. 
For this abelian variety, 
it is decided that 
the structure of the image  
of the Albanese kernel 
by the cycle class map. 
%
\end{abstract}

\section{Introduction}
Let $X = E\times E'$ be the product 
of elliptic curves $E$ and $E'$ defined 
over 
a finite extension $K$ of the $p$-adic field $\Qp$. 
The main objective of this note is to study 
the Chow group $\CHX$ of $0$-cycles on $X$ 
modulo rational equivalence. 
Let $\AX$ be the kernel of 
the degree map $\CHX \to \Z$ 
and $T(X)$ the kernel of the Albanese map $\AX \to X(K)$ 
so called the Albanese kernel for $X$. 
These maps are surjective, and 
we have $\CHX/T(X) \simeq \Z \oplus X(K)$. 
If we assume $p^n$-torsion points $E[p^n]$ and $E'[p^n]$ 
are $K$-rational,  
Mattuck's theorem \cite{Mat55} on $X(K)$ implies 
$\CHX/p^n \simeq (\Z/p^n)^{\oplus (2[K:\Qp] + 5)} \oplus T(X)/p^n$. 
Raskind-Spie\ss\ \cite{RS00} showed 
the injectivity of  
the cycle map  
$\rho: T(X)/p^n \to H^4(X, \Z/p^n(2))$ 
to the \'etale cohomology group of $X$ 
with coefficients $\Z/p^n(2) = \mu_{p^n} \otimes \mu_{p^n}$ 
when $E$ and $E'$ 
have ordinary or split multiplicative reduction. 
Although it is difficult 
to know the kernel of $\rho$ in general 
(the injectivity fails for certain surfaces, see \cite{PS95}, Sect.\ 8), 
one can calculate the structure of its image. 
This is the main theorem of this note: 

\begin{theorem*}[Thm.\ \ref{thm:ell}]
Let $E$ and $E'$ 
be elliptic curves over $K$ 
with 
good or split multiplicative reduction, 
and $E[p^n]$ and $E'[p^n]$ are $K$-rational. 
The structure of the image of $T(X)/p^n$ for $X = E \times E'$ by 
the cycle map $\rho$ is 

\sn
$\mathrm{(i)}$ $\Z/p^n$ 
if both $E$ and $E'$ have 
ordinary or 
split multiplicative reduction.

\sn
$\mathrm{(ii)}$ $\Z/p^n \oplus \Z/p^n$ 
if $E$ and $E'$ have different reduction types.
\end{theorem*}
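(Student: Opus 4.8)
The plan is to reduce the computation of $\Im(\rho)$ to a purely local cup-product calculation in Galois cohomology, and then to evaluate that cup product through the filtrations coming from the formal groups of $E$ and $E'$. First I would replace $T(X)$ by the Somekawa $K$-group: one has a functorial isomorphism $T(X)\simeq K(K;E,E')$ with the Mackey/Somekawa product, and under it the cycle map $\rho$ agrees, via the Hochschild--Serre edge map and the K\"unneth decomposition of $H^2(\ol X)$, with the Galois symbol
\[
 s\colon K(K;E,E')/p^n \longrightarrow H^2\bigl(K,\,E[p^n]\otimes E'[p^n]\bigr),
\]
the relevant K\"unneth summand being $H^1(\ol E,\mu_{p^n})\otimes H^1(\ol E',\mu_{p^n})\cong E[p^n]\otimes E'[p^n]$ (the summands $H^2(\ol E)(2)$ and $H^2(\ol E')(2)$ feed the non-Albanese part $\Z\oplus X(K)$ and are discarded). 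Since $E[p^n]$ and $E'[p^n]$ are $K$-rational the coefficient module is the trivial module $(\Z/p^n)^4$, and $\mu_{p^n}\subset K$ forces $H^2(K,\Z/p^n)\simeq\Z/p^n$, so the target is abstractly $(\Z/p^n)^4$ and it suffices to determine $\Im(s)$. A symbol $\{x,y\}_{L/K}$ goes to $\mathrm{cor}_{L/K}\bigl(\delta_E(x)\cup\delta_{E'}(y)\bigr)$, where $\delta_E\colon E(K)/p^n\hookrightarrow H^1(K,E[p^n])$ is the Kummer map; a projection-formula argument using the norm relations defining $K(K;E,E')$ should let me descend to $L=K$, so that $\Im(s)$ is generated by the $\delta_E(x)\cup\delta_{E'}(y)$ with $x\in E(K)/p^n$, $y\in E'(K)/p^n$.

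Next I would make the cup product explicit. Fixing bases $e_1,e_2$ of $E[p^n]$ and $e_1',e_2'$ of $E'[p^n]$ adapted to the connected--\'etale filtration, the four components of $\delta_E(x)\cup\delta_{E'}(y)$ in $H^2(K,\Z/p^n)\simeq\Z/p^n$ are Hilbert symbols $h\bigl(a_i(x),b_j'(y)\bigr)$, where $\delta_E(x)=\sum_i a_i(x)e_i$ and $\delta_{E'}(y)=\sum_j b_j'(y)e_j'$ with $a_i(x),b_j'(y)\in \Kt/p^n$. The decisive input is the location of $\delta_E(E(K)/p^n)=H^1_f(K,E[p^n])$ relative to this filtration, and this is exactly what the reduction type controls: for good ordinary or split multiplicative reduction the formal group $\wh E$ has height one and $E[p^n]$ sits in an extension $0\to\mu_{p^n}\to E[p^n]\to\Z/p^n\to0$, so $\delta_E$ carries the formal part $\wh E(\mK)/p^n$ into the $\mu_{p^n}$-line as units and the reduction part into the unramified classes of the $\Z/p^n$-quotient; for supersingular reduction $\wh E$ has height two, $E[p^n]=\wh E[p^n]$ is entirely connected, and no $\mu_{p^n}$-line is singled out.

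The core of the argument is then a graded computation. I would filter $K_2^M(K)/p^n$ by the unit filtration of Bloch--Kato and Kurihara and match it with the formal-group filtration on $E(K)$ and $E'(K)$, computing the induced symbol on each $\grm K_2^M$ via the formal logarithm together with the $d\log$-description of the graded pieces in terms of differentials of the residue field; the essential jump is at the critical level $\grto$ (with $t=v_K(p)$), where the formal exponential becomes available. In case (i) the two height-one filtrations align: the unramified $\otimes$ unramified component vanishes because $\operatorname{cd}(k)=1$, and the mixed components vanish because the unit classes carrying the $\mu_{p^n}$-line are orthogonal under $h$ to the unramified classes carrying the \'etale quotient, leaving only the connected $\otimes$ connected component $\mu_{p^n}\otimes\mu_{p^n}$, on which $h$ is onto by nondegeneracy of the Hilbert symbol; hence $\Im(\rho)\simeq\Z/p^n$. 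In case (ii) the supersingular factor has no one-dimensional connected piece to align against the $\mu_{p^n}$-line of the height-one factor, so a second graded component of $s$ survives and stays independent, giving $\Im(\rho)\simeq\Z/p^n\oplus\Z/p^n$.

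The main obstacle I anticipate is precisely this graded symbol computation at the critical level $\grto$: one must pin down the image of the formal-group filtration under $\delta$, isolate the surviving graded pieces of the Hilbert symbol, and prove the exact (non)vanishing and independence statements that fix the rank at $1$ versus $2$. Controlling the Somekawa norm relations well enough to descend to the base field, and checking that the discarded K\"unneth summands genuinely do not contribute to the Albanese part, are the remaining delicate points; the prime $p=2$, where $h$ is no longer alternating, would have to be handled separately.
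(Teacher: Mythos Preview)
Your overall architecture matches the paper's: reduce $T(X)$ to the Somekawa group, identify $\rho$ with the Galois symbol, decompose into four Hilbert symbols via a choice of basis of $E[p^n]\simeq(\mu_{p^n})^2$, and then evaluate these symbols using the filtration on $\Kt/p^n$ coming from the formal group of each factor. The paper does exactly this, and your case~(i) reasoning (connected $\otimes$ connected surjects, the mixed and unramified $\otimes$ unramified pieces vanish) is essentially what is shown there.

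There are, however, two real gaps. First, you are vague about the central technical point: locating $\Im(\delta_E)$ inside the unit filtration $(U^m_n)_m$ of $\Kt/p^n$. For height one this is classical, but for a height-$n$ isogeny with cyclic kernel (the supersingular case) one needs a genuine comparison theorem: the Kummer map carries $G^m(\phi)$ isomorphically onto $U_n^{c_i-c_i(\phi)+m}$ for $c_{i-1}(\phi)<m\le c_i(\phi)$, where the $c_i(\phi)$ are built from the valuations $t_i=v_K(D(\phi_i))$ of the height-one factors of $\phi$. This is not a single ``critical level $\gr^{t/(p-1)}$'' as you write, but a whole ladder of $n$ jumps, and proving it requires factoring $\phi$ through successive quotients by $p^iF[\phi]$ and tracking the graded pieces at each stage. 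Without this you cannot actually identify which $U^s_n$ receive the two components of $\delta_E$ in the supersingular case, and your appeal to ``the $d\log$-description'' and $K_2^M$ does not supply it.

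Second, your case~(ii) argument is only a heuristic. Saying ``no one-dimensional connected piece aligns, so a second component survives'' is not a proof; in fact both components of $\Im(\delta_E)$ for supersingular $E$ land in \emph{proper} subgroups $U_n^{>0}$, and one must show that each of them still pairs surjectively with the ordinary (or split multiplicative) side. The paper does this by an explicit index-set bookkeeping: writing the supersingular ranges as $S=\bigcup_i(c_i-\tfrac{pt_i-t_{i-1}}{p-1},\,c_i]$ and $\wh S$ for the dual isogeny, and checking for each $i$ that the ``diagonal'' set $R_i=\{(s,c_i-s):p\nmid s\}\cup\{(0,c_i),(c_i,0)\}$ meets $S\times O$ and $\wh S\times O$ (e.g.\ via $(c_i-1,1)$). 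That both components survive and are \emph{independent} is what gives rank two, and it requires these concrete intersections, together with a lemma computing $\#(U^s_n,U^t_n)_n$ in terms of which interval $(c_i,c_{i+1}]$ contains $s+t$. Your proposal does not provide any of this machinery. The descent from arbitrary $L/K$ to $L=K$ and the $p=2$ worry are, by contrast, not where the difficulty lies.
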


The same computation works well in 
the remained case: 
Both of $E$ and $E'$ have supersingular reduction. 
The image may be varied 
according to the $p$-th 
coefficients of multiplication $p$ formula 
of the formal completion of the elliptic curves along 
the origin (\Cf Prop.\ \ref{prop:ss}). 
For an arbitrary elliptic curves $E,E'$ over $K$ 
and $X = E\times E'$, 
the base change   
$X' := X \otimes_K K'$
to some sufficiently large extension field $K'$ over $K$ 
satisfies the assumptions in our main theorem above.  
Since the kernel of the multiplication by $p^n$ 
on $\CHX$ is finite (due to Colliot-Th\'el\`ene, \cite{CT93}), 
we have a surjection $\CH_0(X')/p^n \to \CHX/p^n$ 
with finite kernel 
if we admit Raskind and Spie\ss's conjecture (\cite{RS00}, Conj.\ 3.5.4); 
the finiteness of the kernel of 
the cycle map on $X'$ (\Cf \cite{RS00}, Cor.\ 3.5.2). 
Therefore, we limit our consideration 
as in the above theorem. 
The estimation of 
the difference of the image of $T(X)/p^n$ 
and $T(X')/p^n$ by the cycle maps 
is also a problem. 
Murre and Ramakrishnan (\cite{MR09}, Thm.\ A) gave an answer to this problem 
in the case of $n=1$ 
for the self-product $X = E\times E$ 
of an elliptic curve $E$ over $K$ 
with ordinary good reduction. 
In this case, 
they proved that the structure of 
the image is at most $\Z/p$ 
and is exactly $\Z/p$ if and only if 
the definition field $K(E[p])$ over $K$ 
is unramified with the prime to $p$-part of $[K(E[p]):K]$ 
is $\le 2$ and $K$ has a $p$-th root of unity $\zeta_p$. 

The results in our main theorem 
are known by Takemoto \cite{Tak07}
in the case of ordinary reduction 
or split multiplicative reduction. 
So our main interest is in supersingular elliptic curves. 
In Section 2 
we study the image of the Kummer homomorphism 
associated with isogeny of formal groups. 
The main ingredient is 
the structure of the graded quotients 
of a filtration 
on the formal groups (Prop.\ \ref{prop:str}). 
As a special case, 
we obtain 
the structure of the graded quotients
associated with filtration on the multiplicative group 
modulo $p^n$. 
In Appendix, 
we show that the results work also on the Milnor $K$-groups more generally. 
The proof of the main theorem is given in Section 3.

For a discrete valuation field $K$, 
we denote by
$\OK$    the valuation ring of $K$,
$\mK$    the maximal ideal of  $\OK$, 
$k:=\OK/\mK$  the residue field of $\OK$, 
$v_K$  the normalized valuation of $\OK$, 
$\OKtimes$  the group of units in $\OK$, 
$\Kbar$  a fixed separable closure of  $K$ and 
$G_K := \Gal(\Kbar/K)$ the absolute Galois group of $K$.
For an abelian group $A$ and a non-zero integer $m$, 
let $A[m]$ be the kernel and $A/m$ the cokernel 
of the map $m:A\to A$ defined by multiplication by $m$. 

\medskip\noindent
{\it Acknowledgments.} 
The first author thanks 
Takahiro Tsushima 
for his helpful suggestion on   
the theory of canonical subgroup. 
A part of this note was written 
during a stay of the first author at the Duisburg-Essen university. 
He also thanks the institute for its hospitality.

\section{Formal Groups}
\label{sec:gr}
Let $K$ be a 
complete discrete valuation field of characteristic $0$, 
and $k$ its perfect residue field of characteristic $p>0$. 
In this section, 
we decide the image of the Kummer map 
associated with 
an isogeny of formal groups (Thm.\ \ref{thm:main}).
First we recall some basic notions on formal groups from \cite{Fro68}. 
Throughout this section, 
all formal groups are commutative of dimension one. 
Let $F$ be a formal group over 
the valuation ring $\OK$. 
The elements of the maximal ideal $\m_{\Kbar}$ of $\OKbar$ form 
a $G_K$-module denoted by $F(\Kbar)$ 
under the operation $x + y := F(x,y)$. 
Similarly, for a finite extension $L/K$, 
the maximal ideal $\m_L$ forms a subgroup of $F(\Kbar)$ denoted by $F(L)$. 
For an isogeny $\phi:F\to G$ of formal groups defined over $\OK$, 
we regard it as a power series 
$\phi(T) = a_1 T+ a_2 T^2 +\cdots + a_p T^p + \cdots \in \OKT$. 
The coefficient of $T$ in $\phi(T)$ 
is denoted by $D(\phi) := a_1$. 
The {\it height} of $\phi$ is defined to be a positive integer $n$ 
such that $\phi(T) \equiv \psi(T^{p^n})\mod \mK$ 
for some $\psi\in \OKT$ with $v_K(D(\psi)) = 0$ (\Cf \cite{Kaw02}, 2.1). 
It is known that 
the induced homomorphism $F(\Kbar) \to G(\Kbar)$ from the isogeny 
$\phi:F\to G$ is surjective and 
the kernel of $\phi$ 
(= the kernel of the homomorphism $F(\Kbar) \to G(\Kbar)$ 
induced by $\phi$) is a finite group 
of order $p^{n}$, 
where $n$ is the height of $\phi$. 
For any integer $m\ge 1$, $F^m(K)$ is the subgroup of $F(K)$ 
consisting of the set $\mK^m$.
Fix  a uniformizer $\pi$ of $K$. 
For any $m\ge 1$, we have an isomorphism 
\begin{equation}
 \label{eq:q=1}
  \rho: k \isomto \grmF := \FmK/\FmppK 
\end{equation}
defined by $x\mapsto \wt{x}\pi^m$, 
where $\wt{x} \in \OKtimes$ is a lift of $x\in k \ssm \{0\}$. 
Recall the behavior 
of the operation on the graded quotients  
of raising to 
an isogeny $\phi:F\to G$ with height $1$. 

\begin{lemma}[\cite{Ber74}; \cite{Kaw02}, Lem.\ 2.1.2]
\label{lem:Ber}
Let $\phi(T) := a_1T + a_2T^2 +\cdots$ be an isogeny $F\to G$ 
of formal groups 
defined over $\OK$ with height $1$. 

\sn
$\mathrm{(i)}$ The coefficient $a_p$ is a unit in $\OK$. 

\sn 
$\mathrm{(ii)}$ For $m$ such that $p\nmid m$, we have $a_1 \mid a_m$. 
\end{lemma}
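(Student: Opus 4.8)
The plan is to treat the two parts by different means: part (i) falls out of the definition of height, while part (ii) is best obtained by differentiating the relation that governs the invariant differential. For (i), I would unwind the height-one hypothesis directly. By definition there is $\psi(S) = b_1 S + b_2 S^2 + \cdots \in \OK[\![S]\!]$ with $b_1 = D(\psi) \in \OKtimes$ and $\phi(T) \equiv \psi(T^p) \pmod{\mK}$. Since $\psi(T^p) = b_1 T^p + b_2 T^{2p} + \cdots$ involves only the powers $T^{pj}$, comparing the coefficient of $T^p$ gives $a_p \equiv b_1 \pmod{\mK}$, so $a_p$ is a unit. (The same comparison at $T^m$ with $p \nmid m$ yields only $a_m \in \mK$, which part (ii) sharpens.)

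For (ii) I would bring in the invariant differentials $\omega_F = g_F(T)\,dT$ and $\omega_G = g_G(S)\,dS$ (cf.\ \cite{Fro68}), normalized so that $g_F \in \OKT$ and $g_G \in \OK[\![S]\!]$ are units with constant term $1$. Since $\phi$ is a homomorphism, the pullback $\phi^*\omega_G = g_G(\phi(T))\,\phi'(T)\,dT$ is again an invariant differential on $F$, hence equals $c\,\omega_F$ for some $c \in \OK$; evaluating constant terms (using $g_G(0) = 1$ and $\phi'(0) = a_1$) forces $c = a_1 = D(\phi)$. Because $\phi(0) = 0$ and $g_G(0) = 1$, the series $g_G(\phi(T))$ is a unit in $\OKT$, so the identity $g_G(\phi(T))\,\phi'(T) = a_1\,g_F(T)$ can be solved to give
\[
\phi'(T) = a_1\,V(T), \qquad V(T) = \sum_{j\ge 0} v_j T^j \in \OKT \text{ a unit}.
\]

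It then remains to compare coefficients. From $\phi'(T) = \sum_{m \ge 1} m\,a_m T^{m-1}$ we read off $m\,a_m = a_1 v_{m-1}$. When $p \nmid m$ the integer $m$ is a unit in $\OK$, whence $a_m = a_1(v_{m-1}/m) \in a_1\OK$, which is exactly $a_1 \mid a_m$. Here the divisibility is meaningful because $a_1 \neq 0$: were $a_1 = 0$, the displayed relation would give $\phi' = 0$ and hence $\phi = 0$, contradicting that $\phi$ is an isogeny.

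The only step that is not purely formal is the transformation law $\phi^*\omega_G = a_1\,\omega_F$ together with the unit/integrality property of $g_F$ and $g_G$; these are standard facts about invariant differentials of formal groups over $\OK$ (equivalently, they encode the relation $\log_G \circ \phi = a_1 \log_F$ between the logarithms), and I expect citing them correctly to be the main point rather than a genuine obstacle. I note that this route proves (ii) for any isogeny $\phi$, without invoking the height-one hypothesis, which is used only in part (i).
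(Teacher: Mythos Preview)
Your proof is correct. The paper does not supply its own proof of this lemma; it is stated with attribution to \cite{Ber74} and \cite{Kaw02}, Lem.~2.1.2, and no argument is given in the text. Part~(i) is, as you note, immediate from the definition of height, and your invariant-differential argument for~(ii) is the standard route: the identity $\phi^*\omega_G = D(\phi)\,\omega_F$ together with the fact that the normalized invariant differentials lie in $1 + T\,\OKT$ gives $\phi'(T) \in a_1\,\OKT$, and comparing coefficients finishes. Your observation that~(ii) does not use the height-one hypothesis is also correct.
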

The following lemma is proved essentially 
as same as the case $F = \Gmhat$ the multiplicative group 
(e.g.,\ \cite{FV02}, Chap.\ I, Sect.\ 5). 

\begin{lemma}
\label{lem:grgr}
Let $\phi(T) := a_1T + a_2T^2 +\cdots$ be an isogeny $F\to G$ 
of formal groups 
defined over $\OK$ with height $1$. 
Define $t := v_K(a_1)$ and 
let $a$ be the residue class of $a_1 \pi^{-t}$ 
and $m\ge1$ an integer. 
Then, we have $\phi(\FmK) \subset \GmpK$ for $m \le t/(p-1)$ and 
$\phi(\FmK) \subset \GmtK$ for $m>t/(p-1)$. 
The isogeny $\phi$ induces the following:

\sn
$\mathrm{(i)}$ If $m<t/(p-1)$, the diagram
$$
\xymatrix@C=15mm{
  \grmF \ar[r]^{\phi} & \grmpG \\
  k \ar[u]^{\rho}\ar[r]^{\ol{a}_pC^{-1}} & k\ar[u]_{\rho}
}
$$
is commutative, where $\ol{a}_p \in k$ 
is the residue class of $a_p\in \OKtimes$ 
and $C^{-1}:k\to k$ is ``the inverse Cartier operator''
\footnote{
For the original definition of 
the inverse Cartier operator, 
see (\ref{eq:iCartier}) in Appendix. 
} 
defined by $x\mapsto x^p$. 
The horizontal homomorphisms are bijective. 

\sn
$\mathrm{(ii)}$ If $m = t/(p-1)$ is in $\Z$, the diagram
$$
\xymatrix@C=15mm{
  \grtoF \ar[r]^{\phi} & \grttoG \\
  k \ar[u]^{\rho}\ar[r]^{a + \ol{a}_pC^{-1}} & k\ar[u]_{\rho}
}
$$
is commutative, where 
the bottom map defined by $x \mapsto ax + \ol{a}_p x^p$.

\sn
$\mathrm{(iii)}$ If $m>t/(p-1)$, the diagram
$$
\xymatrix@C=15mm{
  \grmF \ar[r]^{\phi} & \grmtG \\
  k \ar[u]^{\rho}\ar[r]^{a} & k\ar[u]_{\rho}
}
$$
is commutative, where
the bottom map defined by $x \mapsto ax$. 
The horizontal homomorphisms are bijective. 
Furthermore, we have $G^{m+t}(K) \subset \phi F^{m}(K)$.
\end{lemma}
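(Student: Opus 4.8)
The plan is to compute $\phi(x)$ termwise for $x\in\FmK$ and to locate the summand of least valuation using the coefficient bounds of Lemma \ref{lem:Ber}. Writing $\phi(x)=\sum_{j\ge1}a_jx^j$ (ordinary addition and multiplication in $K$), the $j$-th summand has valuation $v_K(a_j)+jv_K(x)\ge v_K(a_j)+jm$ for $x\in\FmK$. By Lemma \ref{lem:Ber} we have $v_K(a_1)=t$, $v_K(a_p)=0$, and $v_K(a_j)\ge t$ whenever $p\nmid j$, while $v_K(a_j)\ge0$ in general. A short comparison then shows that the only summands able to attain the minimal valuation are $j=1$ (valuation $t+m$) and $j=p$ (valuation $pm$), every other summand being strictly larger; the comparison $pm\lessgtr t+m$ is exactly $m\lessgtr t/(p-1)$. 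Since $\min_j\bigl(v_K(a_j)+jm\bigr)$ thus equals $pm$ when $m\le t/(p-1)$ and $m+t$ when $m\ge t/(p-1)$, this yields the inclusions $\phi(\FmK)\subset\GmpK$ and $\phi(\FmK)\subset\GmtK$ at once, and, applied with $m+1$ in place of $m$, the inclusions into the next filtration step that make the induced maps on graded quotients well defined.

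Next I would read off the three bottom maps. As $\phi:\FK\to G(K)$ is a homomorphism respecting the filtrations just established, it descends to the graded quotients, where the formal group law reduces to ordinary addition modulo the next step, so each $\gr^m$ is identified additively with $k$ via $\rho$. Taking $x=u\pi^m$ with $u\in\OKtimes$ of residue $\xi$, and writing $a_1=\pi^t a_1'$ with $a_1'\in\OKtimes$ of residue $a$, I compute the residue of the leading part of $\phi(x)$ divided by the relevant power of $\pi$. In case (i) the leading term is $a_px^p$, giving $\xi\mapsto\ol{a}_p\xi^p$; in case (iii) it is $a_1x$, giving $\xi\mapsto a\xi$; in case (ii), where $pm=m+t$, both terms contribute and their residues add (the sum in $\phi(x)$ being ordinary), giving $\xi\mapsto a\xi+\ol{a}_p\xi^p$. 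Bijectivity in (i) uses that $k$ is perfect, so Frobenius is bijective and $\ol{a}_p\ne0$; in (iii) it is clear since $a\ne0$.

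The one statement needing more than bookkeeping is the final inclusion $\GmtK\subset\phi\FmK$ in (iii), which I would prove by successive approximation. Given $z\in\GmtK$, surjectivity of the induced map $\grmF\to\grmtG$ lets me choose $y_0\in\FmK$ with $\phi(y_0)\equiv z\bmod G^{m+t+1}(K)$; replacing $z$ by $z-_G\phi(y_0)\in G^{m+t+1}(K)$ and using that $m+i>t/(p-1)$ for every $i\ge0$, the same surjectivity at each level produces $y_i\in F^{m+i}(K)$ with $\phi(y_0+_F\cdots+_Fy_N)\to z$. Because $K$ is complete and $v_K(y_i)\ge m+i\to\infty$, the formal sum $y:=y_0+_Fy_1+_F\cdots$ converges in $\FmK$, and continuity of $\phi$ gives $\phi(y)=z$.

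The main obstacle I anticipate is making this last argument clean: one must verify that the successive corrections genuinely lie in the deeper filtration steps $F^{m+i}(K)$, so that the series converges, and that the subtraction is performed in the group $G(K)$ rather than naively in $K$, so that the homomorphism property of $\phi$ is available at each stage. The valuation comparison is routine once Lemma \ref{lem:Ber} is in hand, and the identification of the bottom maps is a direct residue computation; it is only the passage from surjectivity on each $\gr$ to the ungraded containment that genuinely requires the completeness of $K$.
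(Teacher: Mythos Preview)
Your argument is correct and follows essentially the same route as the paper: a valuation comparison of the summands $a_jx^j$ using Lemma~\ref{lem:Ber} to identify the dominant terms $j=1$ and $j=p$, followed by reading off the residues to obtain the bottom maps, and a successive-approximation argument exploiting completeness of $K$ for the final inclusion. The paper records the latter more tersely as the chain $G^{m+t}(K)\subset G^{m+t+1}(K)+\phi F^m(K)\subset\cdots\subset\phi F^m(K)$, but your more explicit version with convergent formal sums is equivalent.
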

\begin{proof} 
  Take any $u\pi^m \in \FmK$ with $u\in \OKtimes$. 
  From Lemma \ref{lem:Ber}, 
  we have $v_K(\phi(u\pi^m)) \ge \min\{t + m, pm\}$ 
  (the equality holds if $m\neq t/(p-1)$). 
  Moreover, we have 
  $$
    \phi(u\pi^m) \equiv
    \begin{cases}
      \ol{a}_pu^p\pi^{mp}  \mod \pi^{mp+1},& \mathrm{if}\ m < t/(p-1), \\
       (a u + \ol{a}_pu^p)\pi^{t + t/(p-1)} \mod \pi^{t + t/(p-1) + 1}, & \mathrm{if}\ m = t/(p-1),\\
       a u\pi^{m+t} \mod \pi^{m+t+1}, &\mathrm{if}\ m > t/(p-1).
    \end{cases}
  $$  
  The assertions except the last one follow from it. 
  Using the completeness of $K$, we obtain
  $G^{m+t}(K) \subset G^{m+t +1}(K) + \phi F^{m}(K) \subset G^{m+t +2}(K) + \phi F^m(K) \subset \cdots \subset \phi F^m(K)$ if $m > t/(p-1)$.
\end{proof}
\begin{corollary}
\label{cor:Kaw}
Let $\phi:F\to G$ be an isogeny of formal groups 
defined over $\OK$ with height $1$. 
Assume $F[\phi]:= \Ker(\phi) \subset \FK$. 
For any non-zero element $x \in F[\phi]$, 
we have $v_K(x) = t/(p-1) \in \Z$. 
The kernel of $\phi:\grtoF \to \grttoG$  
is of order $p$. 
\end{corollary}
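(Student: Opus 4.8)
The plan is to deduce both assertions from the valuation estimate of Lemma~\ref{lem:grgr}, using the height~$1$ hypothesis (so $\Ker(\phi)$ has order exactly $p$) together with the rationality assumption $F[\phi]\subset\FK$. First I would pin down the valuation of a nonzero kernel point. Let $x\in F[\phi]$ be nonzero and set $m:=v_K(x)$; since $x\in\FK=\mK$ this is a positive integer. Writing $x=u\pi^m$ with $u\in\OKtimes$, Lemma~\ref{lem:grgr} gives $v_K(\phi(x))=\min\{t+m,\,pm\}$ whenever $m\neq t/(p-1)$, which is finite and contradicts $\phi(x)=0$. Hence the only possibility is $m=t/(p-1)$, and since $m\in\Z$ this forces $t/(p-1)\in\Z$. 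This proves the first assertion, and in particular every nonzero element of $F[\phi]$ lies in $\FmK$ with $m=t/(p-1)$.

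For the second assertion I would transfer the computation to the residue field. By Lemma~\ref{lem:grgr}(ii) the map $\phi\colon\grtoF\to\grttoG$ corresponds, under the isomorphisms $\rho\colon k\isomto\grtoF$ and $\rho\colon k\isomto\grttoG$, to the additive polynomial $\psi\colon k\to k$, $\psi(x)=ax+\ol{a}_px^p$. Since $\ol{a}_p\neq0$ (as $a_p$ is a unit by Lemma~\ref{lem:Ber}(i)), $\psi$ has degree $p$, hence at most $p$ roots in $k$, so $\#\Ker(\psi)\le p$. It remains to produce $p$ elements in the kernel, and for this I would invoke the hypothesis $F[\phi]\subset\FK$.

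By the first assertion every nonzero kernel point has valuation exactly $m=t/(p-1)$, so $F[\phi]\subset\FmK$, and the quotient map $\FmK\to\grtoF$ restricts to a group homomorphism $F[\phi]\to\grtoF$ whose image lies in $\Ker(\psi)$. This homomorphism is injective: if two kernel points had the same image, their difference would again be a kernel point, now lying in $\FmppK$ and so of valuation $>t/(p-1)$, which by the first assertion can only be $0$. Since $F[\phi]$ has order $p$, we conclude $\#\Ker(\psi)\ge p$, hence exactly $p$, which is the claim.

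The main obstacle is this lower bound in the second assertion. The upper bound $\#\Ker(\psi)\le p$ is automatic from the degree, but over a merely perfect (not algebraically closed) residue field $k$ the equation $\ol{a}_px^{p-1}=-a$ need not have $p-1$ solutions in $k$, so one cannot count roots of $\psi$ directly. The rationality hypothesis $F[\phi]\subset\FK$ is exactly what supplies the missing roots, via the injection $F[\phi]\hookrightarrow\Ker(\psi)$; the crucial input making that injection work is the uniform valuation of all nonzero kernel points established in the first step.
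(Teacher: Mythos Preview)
Your proof is correct and follows essentially the same approach as the paper. For the second assertion the paper is terser, simply writing the kernel of $x\mapsto ax+\ol{a}_p x^p$ as $\sqrt[p-1]{-a/\ol{a}_p}\,\Fp$; the existence of this $(p-1)$-th root in $k$ is left implicit but is supplied by exactly the rationality hypothesis $F[\phi]\subset\FK$, which is the same input you use via your injection $F[\phi]\hookrightarrow\Ker(\psi)$.
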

\begin{proof}
For any non-zero $x\in F[\phi]$, 
we have $\phi(x) = a_1 x + a_2x^2 + \cdots = 0$. 
Hence $t+ v_K(x) = v_K(a_1 x) = v_K(a_p x^p) = p v_K(x)$ 
and $v_K(x) = t/(p-1)$. 
The kernel of $x \mapsto ax + \ol{a}_px^{p}$ is $\sqrt[p-1]{-a/ \ol{a}_p}\Fp$. 
\end{proof}

The filtration $G^m(\phi)$ on $G(\phi) := G(K)/\phi F(K)$ 
is defined by the image of the filtration $\GmK$. 
For an isogeny $\phi:F\to G$ with height $1$, 
its graded quotients $\grmphi := \Gmphi/\Gmppphi$ 
describe the cokernels of $\phi$ in 
Lemma\ \ref{lem:grgr} as follows:

\begin{lemma}
\label{lem:ex <=pt0 ht1}
Let $\phi:F\to G$ be an isogeny over $\OK$ with height $1$ 
and $t:= v_K(D(\phi))$. 

\sn
$\mathrm{(i)}$  
If $m< t + t/(p-1)$, 
the following sequence 
$$
  0 \to \gr^{m/p}(F) \onto{\phi} \grmG \to \grmphi \to 0
$$
is exact, where $\gr^{x}(F) = 0$ if $x \not\in \Z$ by convention.

\sn
$\mathrm{(ii)}$ 
If $m = t+ t/(p-1)$ is in $\Z$, then
$$
   \gr^{t/(p-1)}(F) \onto{\phi} \grttoG \to \grttophi \to 0
$$
is exact. 

\sn
$\mathrm{(iii)}$ 
If $m>t+ t/(p-1)$, then 
$$
  0 \to \gr^{m-t}(F) \onto{\phi} \grmG \to \grmphi \to 0
$$
is exact.
\end{lemma}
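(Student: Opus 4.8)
The plan is to analyze the natural surjection $\operatorname{gr}^m(G) \twoheadrightarrow \operatorname{gr}^m(\phi)$ and to identify its kernel with the image of $\phi$ coming from a single graded piece of $F$. Since $G^m(\phi)$ is by definition the image of $G^m(K)$ in $G(\phi) = G(K)/\phi F(K)$, the inclusion $G^m(K) \hookrightarrow G(K)$ induces a surjection $\operatorname{gr}^m(G) \twoheadrightarrow \operatorname{gr}^m(\phi)$, and the modular law $A \cap (B + C) = B + (A \cap C)$ applied to $A = G^m(K)$, $B = G^{m+1}(K)$, $C = \phi F(K)$ identifies its kernel with the image $I_m$ of $\phi F(K) \cap G^m(K)$ inside $\operatorname{gr}^m(G) = G^m(K)/G^{m+1}(K)$. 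Thus in each case it suffices to show that $I_m$ coincides with the image of the graded map $\phi \colon \operatorname{gr}^{m'}(F) \to \operatorname{gr}^m(G)$ furnished by Lemma~\ref{lem:grgr}, where $m' = m/p$ in (i), $m' = t/(p-1)$ in (ii), and $m' = m-t$ in (iii); the claimed sequences then follow by combining this with the description of that map (the bijection $\ol{a}_p C^{-1}$ in (i), the map $x \mapsto ax + \ol{a}_p x^p$ in (ii), and the bijection $x \mapsto ax$ in (iii)), the bijectivity in (i) and (iii) simultaneously yielding the injectivity asserted on the left of those two sequences.

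The inclusion $\operatorname{Im}\big(\phi \colon \operatorname{gr}^{m'}(F) \to \operatorname{gr}^m(G)\big) \subseteq I_m$ is immediate from $\phi(F^{m'}(K)) \subseteq G^m(K)$. For the reverse inclusion I would introduce the strictly increasing ``valuation transfer'' function $\lambda$ given by $\lambda(\ell) = p\ell$ for $\ell \le t/(p-1)$ and $\lambda(\ell) = \ell + t$ for $\ell \ge t/(p-1)$ (the two formulas agreeing at the boundary); Lemma~\ref{lem:grgr} says precisely that $v_K(\phi(y)) = \lambda(v_K(y))$ for every $y \in F(K)$, with the sole exception that at $v_K(y) = t/(p-1)$ the valuation of $\phi(y)$ may jump higher, namely when the residue of $y$ lies in the kernel of $x \mapsto ax + \ol{a}_p x^p$. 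The decisive point is that $\lambda$ is strictly increasing (as $p \ge 2$), hence injective on integers, so each $\operatorname{gr}^m(G)$ receives contributions from at most one $\operatorname{gr}^{m'}(F)$, namely the prescribed $m'$.

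To prove $I_m \subseteq \operatorname{Im}(\phi)$, take $g \in \phi F(K) \cap G^m(K)$ and choose a preimage $y$ with $\phi(y) = g$ of maximal valuation; this is possible because the preimages of $g$ form a coset of the finite group $F[\phi]$, which has order $p$ and all of whose nonzero elements have valuation $t/(p-1)$ by Corollary~\ref{cor:Kaw}. Maximality excludes the exceptional boundary drop, so $v_K(\phi(y)) = \lambda(v_K(y)) \ge m = \lambda(m')$, whence $v_K(y) \ge m'$ by monotonicity. If $v_K(y) = m'$, the class of $g$ in $\operatorname{gr}^m(G)$ is exactly the image of $\ol{y} \in \operatorname{gr}^{m'}(F)$; if $v_K(y) > m'$, then $\lambda(v_K(y)) \ge m+1$ forces $g \in G^{m+1}(K)$, so the class of $g$ vanishes. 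Either way $[g] \in \operatorname{Im}(\phi)$, giving $I_m = \operatorname{Im}(\phi)$ and hence the three exact sequences.

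I expect the main obstacle to be this last step, and specifically the boundary value $v_K(y) = t/(p-1)$: there the leading coefficient of $\phi(y)$ can cancel, so a carelessly chosen preimage may sit in the wrong step of the filtration. The remedy, replacing $y$ by a preimage of maximal valuation (equivalently, subtracting a suitable element of $F[\phi]$), is the one delicate point; everything else is bookkeeping with the monotone function $\lambda$ and Lemma~\ref{lem:grgr}. A minor case worth isolating is $p \nmid m$ in (i), where no integer $m'$ satisfies $\lambda(m') = m$, so $I_m = 0$ and the convention $\operatorname{gr}^{m/p}(F) = 0$ correctly gives $\operatorname{gr}^m(\phi) = \operatorname{gr}^m(G)$.
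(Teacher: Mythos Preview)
Your overall strategy matches the paper's: both identify $\gr^m(\phi)$ with $\gr^m(G)/I_m$ where $I_m$ is the image of $\phi F(K)\cap G^m(K)$, and both reduce to the valuation computations of Lemma~\ref{lem:grgr}. The paper proceeds by direct case analysis on $r=v_K(x)$ versus $t/(p-1)$ for an \emph{arbitrary} preimage $x$ with $v_K(\phi(x))=m$, whereas you package this into the monotone function $\lambda$ and appeal to a preimage of maximal valuation to rule out the boundary anomaly.

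There is, however, a gap in your boundary handling. Lemma~\ref{lem:ex <=pt0 ht1} does \emph{not} assume $F[\phi]\subset F(K)$, but your justification that ``maximality excludes the exceptional boundary drop'' invokes Corollary~\ref{cor:Kaw}, which carries precisely that hypothesis, and your assertion that the $K$-rational preimages of $g$ form a coset of the full group $F[\phi]$ of order $p$ is only correct when $F[\phi]\subset F(K)$; in general they form a coset of $F[\phi]\cap F(K)$. When $F[\phi]\cap F(K)=0$ there is a unique $K$-rational preimage and the maximality argument has no force. The repair is to observe that in this case the exceptional drop cannot occur for $K$-points at all: if $y\in F(K)$ had $v_K(y)=t/(p-1)$ with $v_K(\phi(y))>pt/(p-1)$, then the last assertion of Lemma~\ref{lem:grgr}(iii) produces $z\in F^{v_K(\phi(y))-t}(K)$ with $\phi(z)=\phi(y)$, and $y-z$ would be a nonzero element of $F[\phi]\cap F(K)$, a contradiction. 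With this dichotomy (either $F[\phi]\subset F(K)$ and your coset trick applies, or the boundary case is vacuous) your argument goes through. The paper's direct case analysis sidesteps the issue by showing that for any $x$ with $v_K(\phi(x))=m$ the possibilities $r<t/(p-1)$, $r=t/(p-1)$, $r>t/(p-1)$ force the desired conclusion without ever varying the preimage.
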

\begin{proof}
  Note that we have 
  $\grmphi \simeq G^m(K)/(\phi F(K) \cap G^m(K) + G^{m+1}(K))$. 
  Consider the case (i), (ii). 
  For any $\phi(x) \in G^m(K)$ with $x\in F(K)$, 
  we have an inequality 
  $v_K(\phi(x)) \ge \min\{t + r, pr\}$, where $r =v_K(x)$
  (the equality holds if $r \neq t/(p-1)$) 
  by Lemma \ref{lem:Ber}. 
  To show the injectivity of $\grmG \to \grmphi$ 
  if $p \nmid m$, 
  it is enough to show 
  $\phi F(K) \cap G^m(K) \subset G^{m+1}(K)$. 
  For any $\phi(x) \in G^m(K) \cap \phi F(K)$, 
  assume $m = v_K(\phi(x))$. 
  By Lemma \ref{lem:Ber} as above, 
  $m = pr$ if $t/(p-1) > r$. 
  Otherwise, $m= pt/(p-1)$. 
  This contradicts to $p\nmid m$.
  Thus $v_K(\phi(x)) > m$ and 
  we obtain $\phi(x) \in G^{m+1}(K)$. 
  In the case of $p \mid m$, 
  Take any $\phi(x) \in G^m(K) \cap \phi F(K)$ 
  with $m = v_K(\phi(x))$. 
  From the above (in)equality, we have $v_K(x)  = m/p$. 
  The rest of the assertions follows from it. 
  Next we consider the case (iii). 
  For any $\phi(x) \in G^m(K) \cap \phi F(K)$ 
  with $m = v_K(\phi(x))$. 
  If $t/(p-1) > r$ then $m = pr < pt/(p-1)$ and 
  this contradicts to $m > pt/(p-1)$.
  Otherwise $m \ge t +r$. 
  Hence $r \le m -t$ and thus $x \in F^{m-t}(K)$. 
\end{proof}

Recall that a perfect field 
is said to be {\it quasi-finite}\/ 
if its absolute Galois group is isomorphic to $\widehat{\Z}$ 
(\Cf \cite{Ser68}, Chap.\ XIII, Sect.\ 2). 

\begin{corollary}[\Cf \cite{Ber74}, Lem.\ 1.1.2; \cite{Kaw02}, Lem.\ 2.1.3]
\label{lem:ht1}
Let $\phi(T) := a_1T + a_2T^2 +\cdots$ be an isogeny $F\to G$ 
of formal groups 
defined over $\OK$ with height $1$. 
Assume $F[\phi] \subset F(K)$. 
Define $t := v_K(a_1)$ and 
let $m\ge1$ be an integer. 

\sn
$\mathrm{(i)}$ 
If $m<t + t/(p-1)$, we have
$$
\grmphi \simeq   \begin{cases}
  k ,& \mathit{if}\ p\nmid m,\\
  0 , & \mathit{if}\ p\mid m.
  \end{cases}
$$

\sn
$\mathrm{(ii)}$ 
If $m = t + t/(p-1)$, we have 
$\grttophi \simeq k/(a +\ol{a}_pC^{-1})k$, 
where  $a$  is the residue class of $a_1 \pi^{-t}$. 
If we further assume that $k$ is separably closed, then 
$\grttophi = 0$. 
If $k$ is quasi-finite, then $\grttophi \simeq  \Z/p\Z$.

\sn
$\mathrm{(iii)}$ 
If $m>t+t/(p-1)$, 
we have $G^m(\phi) = 0$. In particular, $\grmphi = 0$.
\end{corollary}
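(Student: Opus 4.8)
The plan is to read off each case of Corollary~\ref{lem:ht1} directly from the exact sequences of Lemma~\ref{lem:ex <=pt0 ht1}, computing the remaining graded pieces of $F$ using the isomorphisms \eqref{eq:q=1} and the explicit maps of Lemma~\ref{lem:grgr}. First I would treat case~(iii): since $m > t + t/(p-1)$, the exact sequence of Lemma~\ref{lem:ex <=pt0 ht1}(iii) together with the last assertion of Lemma~\ref{lem:grgr}(iii), namely $G^{m+t}(K) \subset \phi F^m(K)$, forces the cokernel to vanish. In fact this surjectivity statement, propagated through the filtration, should give $G^m(\phi) = 0$ for all $m$ beyond the critical index, which is the strongest form of (iii); I would phrase the argument so that the completeness of $K$ used in Lemma~\ref{lem:grgr} does the work of killing the whole filtration $G^m(\phi)$ at once.

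Next I would handle case~(i). Here the exact sequence is $0 \to \gr^{m/p}(F) \to \grmG \to \grmphi \to 0$, and by \eqref{eq:q=1} every nonzero graded quotient of $F$ or $G$ is isomorphic to $k$. When $p \nmid m$, the term $\gr^{m/p}(F)$ vanishes by the convention $\gr^x(F) = 0$ for $x \notin \Z$, so $\grmphi \simeq \grmG \simeq k$. When $p \mid m$, the map $\gr^{m/p}(F) \xrightarrow{\phi} \grmG$ is the inverse-Cartier-twisted map $\ol{a}_p C^{-1}$ of Lemma~\ref{lem:grgr}(i), which is bijective on $k$ (it is $x \mapsto \ol{a}_p x^p$ with $\ol{a}_p$ a unit, hence injective, and surjective since $k$ is perfect), so the cokernel $\grmphi$ is $0$. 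This gives exactly the dichotomy stated.

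The subtle case, and the one I expect to be the main obstacle, is case~(ii) at the critical index $m = t + t/(p-1)$. The exact sequence only asserts right-exactness, $\gr^{t/(p-1)}(F) \xrightarrow{\phi} \grttoG \to \grttophi \to 0$, and by Lemma~\ref{lem:grgr}(ii) the relevant map is the additive polynomial $x \mapsto ax + \ol{a}_p x^p$ on $k$. Thus $\grttophi \simeq k/(a + \ol{a}_p C^{-1})k$, the first claim. The work is to analyze the cokernel of this $\Fp$-linear map $L\colon k \to k$, $L(x) = ax + \ol{a}_p x^p$. If $k$ is separably closed, $L$ is surjective because its image is a subgroup whose complement would give a separable equation $ax + \ol{a}_p x^p = c$ with no root, contradicting separable closedness; hence $\grttophi = 0$. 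If $k$ is quasi-finite, I would invoke the hypothesis $F[\phi] \subset F(K)$: Corollary~\ref{cor:Kaw} shows the kernel of $L$ has order $p$, so $\dim_{\Fp} \Ker L = 1$, and since $L$ is an additive endomorphism of the (as a Galois module) $\wh{\Z}$-type group $k$, the cokernel has the same $\Fp$-dimension as the kernel by the quasi-finite analogue of the rank-nullity balance for Artin--Schreier-type operators; therefore $\grttophi \simeq \Z/p\Z$. The delicate point is justifying this kernel--cokernel duality in the quasi-finite setting rather than the finite one, for which I would reduce to a Herbrand-quotient computation for the $\wh{\Z}$-action, using that the surjectivity part again follows from separable closedness of $\ol{k}$ combined with the Galois descent furnished by $\Gal(\ol{k}/k) \simeq \wh{\Z}$.
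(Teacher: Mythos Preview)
Your treatment of cases (i), (iii), the first claim of (ii), and the separably closed subcase is exactly the paper's argument: everything is read off from Lemmas~\ref{lem:grgr} and~\ref{lem:ex <=pt0 ht1}, and your added detail (e.g.\ checking that $m/p < t/(p-1)$ in case~(i) so that Lemma~\ref{lem:grgr}(i) applies, and deducing $G^m(K) \subset \phi F^{m-t}(K)$ from $m-t > t/(p-1)$ in case~(iii)) only makes explicit what the paper leaves to the reader.

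The one point of divergence is the quasi-finite subcase of~(ii). Your plan---a Herbrand-quotient or kernel--cokernel balance for the additive endomorphism $L(x) = ax + \ol{a}_p x^p$---is pointed in the right direction but is not quite well-posed as stated: Herbrand quotients live on finite cyclic groups, and $\wh{\Z}$-modules require a different formulation. The paper instead argues cohomologically: extend $L$ to an endomorphism of $\ol{k}$, where it is surjective with kernel $\Z/p$ (trivial $G_k$-action, since $F[\phi] \subset F(K)$), obtaining a short exact sequence $0 \to \Z/p \to \ol{k} \to \ol{k} \to 0$ of $G_k$-modules. The long exact sequence in Galois cohomology, together with the additive Hilbert~90 vanishing $H^1(k,\ol{k}) = 0$, gives $k/L(k) \simeq H^1(G_k, \Z/p)$, and since $G_k \simeq \wh{\Z}$ this is $\operatorname{Hom}(\wh{\Z},\Z/p) \simeq \Z/p$. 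This is the precise version of the ``Galois descent'' you were reaching for, and it replaces the somewhat informal kernel--cokernel heuristic with a two-line cohomological computation.
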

\begin{proof}
The proof below is cited from \cite{Kaw02}.  
The assertions follow from 
Lemmas \ref{lem:grgr} and \ref{lem:ex <=pt0 ht1}. 
If $k$ is quasi-finite, then 
the homomorphism 
$\phi:\gr^{t/(p-1)}(F) \to \gr^{t + t/(p-1)}(G)$ 
is extended to $\phi:\ol{k} \to \ol{k}$. 
Since $H^1(k, \ol{k}) = 1$ 
and $\Ker(\phi) \simeq \Z/p\Z$ as $G_k$-modules, 
we have $k/\phi(k) \simeq H^1(k,\Ker(\phi)) \simeq \Z/p\Z$. 
\end{proof}

Let $\phi:F\to G$ be an isogeny with finite height $n > 1$ 
and {\it assume} $F[\phi]$ is cyclic and 
$F[\phi] \subset F(K)$. 
Let $x_0 \in F(K)$ be the generator of the cyclic group $F[\phi]$. 
The subgroup $pF[\phi] \subset F[\phi]$ 
generated by $[p]x_0$ has order $p^{n-1}$, where 
$[p]$ is the multiplication by $p$ map on $F$. 
From the theorem of Lubin (\cite{Fro68}, Chap.\ IV, Thm.\ 4), 
there exists 
a formal group $G_1 := F/pF[\phi]$ defined over $\OK$ and 
the isogeny $\phi$ factors as $\phi = \phi_1 \circ \psi$, 
where $\psi: F\to G_1$ is an isogeny over $\OK$ such that 
$F[\psi] = pF[\phi]$ (thus $\psi$ is an isogeny with height $n-1$ 
and $\phi_1$ has height 1). 
Note that the kernel $G_1[\phi_1]$ is generated by $\psi(x_0)$. 
From the following lemma, the structure of 
$\grmphi$ is obtained from that in the case of height $1$ 
(Cor.\ \ref{lem:ht1}). 

\begin{lemma}
\label{lem:exact}
Put $t_1:= v_K(D(\phi_1))$. 

\sn
$\mathrm{(i)}$ If $m < t_1+ t_1/(p-1)$, 
the sequence
$$
  0 \to \gr^{m/p}(\psi) \onto{\phi_1} \grmphi \to \grm(\phi_1) \to 0
$$
is exact, where $\gr^{x}(\psi) = 0$ if $x\not\in\Z$ by convention.

\sn
$\mathrm{(ii)}$ If $m = t_1+ t_1/(p-1)$, 
the sequence 
$$
  \gr^{t_1/(p-1)}(\psi) \onto{\phi_1} \gr^{t_1+ t_1/(p-1)}(\phi) 
    \to \gr^{t_1+ t_1/(p-1)}\gr(\phi_1) \to 0
$$
is exact. 

\sn
$\mathrm{(iii)}$ If $m>t_1+t_1/(p-1)$, then 
the sequence
$$
  0 \to \gr^{m-t_1}(\psi) \onto{\phi_1} \grmphi \to \grm(\phi_1) \to 0
$$
is exact. 
In particular, we have an isomorphism $\gr^{m-t_1}(\psi) \simeq \grmphi$.
\end{lemma}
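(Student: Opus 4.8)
The plan is to deduce all three sequences from the single factorization $\phi = \phi_1 \circ \psi$, treating $\phi_1 : G_1 \to G$ as a height-$1$ isogeny to which the analysis of Lemmas \ref{lem:grgr} and \ref{lem:ex <=pt0 ht1} applies; note that $G_1[\phi_1] \subset G_1(K)$, being generated by $\psi(x_0)$ with $x_0 \in F(K)$. Since $\phi F(K) = \phi_1(\psi F(K)) \subset \phi_1 G_1(K)$, the quotient map $G(K)/\phi F(K) \twoheadrightarrow G(K)/\phi_1 G_1(K)$ respects the filtrations by the images of $\GmK$ and hence induces a surjection $\grmphi \twoheadrightarrow \gr^m(\phi_1)$ for every $m$. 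Using the identification $\grmphi \simeq G^m(K)/(\phi F(K)\cap G^m(K) + G^{m+1}(K))$ from the proof of Lemma \ref{lem:ex <=pt0 ht1} (and the analogous one for $\phi_1$), the kernel of this surjection is represented by the classes $\phi_1(y)$ with $y \in G_1(K)$ and $\phi_1(y) \in G^m(K)$, taken modulo $\phi F(K)\cap G^m(K) + G^{m+1}(K)$. It then remains to identify this kernel with the appropriate graded piece of $\psi$.

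First I would pin down the filtration level of such a $y$ by invoking the strict valuation behaviour of the height-$1$ isogeny $\phi_1$ recorded in Lemma \ref{lem:grgr}: with $t_1 = v_K(D(\phi_1))$ one has $v_K(\phi_1(y)) = p\,v_K(y)$ when $v_K(y) < t_1/(p-1)$ and $v_K(\phi_1(y)) = v_K(y) + t_1$ when $v_K(y) > t_1/(p-1)$. Forcing $v_K(\phi_1(y)) = m$ therefore singles out $v_K(y) = m/p$ in case (i) (possible only when $p \mid m$, consistent with the convention $\gr^{x}(\psi)=0$ for $x \notin \Z$), $v_K(y) = t_1/(p-1)$ in the boundary case (ii), and $v_K(y) = m - t_1$ in case (iii); any $y$ of larger admissible valuation has $\phi_1(y) \in G^{m+1}(K)$ and so contributes trivially. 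I would then define the map into the kernel by sending the class of $w \in G_1^{j}(K)$ in $\gr^{j}(\psi)$, with $j = m/p,\ t_1/(p-1),\ m-t_1$ respectively, to the class of $\phi_1(w)$, and verify it is well defined and surjective onto the kernel: well-definedness uses that $w \in \psi F(K)\cap G_1^{j}(K) + G_1^{j+1}(K)$ forces $\phi_1(w) \in \phi F(K)\cap G^m(K) + G^{m+1}(K)$, again by the valuation jump of $\phi_1$, while surjectivity is exactly the valuation bookkeeping just described.

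For left-exactness in cases (i) and (iii) I would run the same computation backwards: if $\phi_1(w) \in \phi F(K)\cap G^m(K) + G^{m+1}(K)$, then writing $\phi_1(w) = \phi(z) + (\text{higher})$ with the higher term in $G^{m+1}(K)$ gives $\phi_1(w - \psi z) \in G^{m+1}(K)$, whence $v_K(w - \psi z) > j$ by the strict inequalities of Lemma \ref{lem:grgr}; that is, $w \in \psi F(K) + G_1^{j+1}(K)$ and the class vanishes in $\gr^{j}(\psi)$. In case (iii) I would finally combine the resulting isomorphism with $\gr^m(\phi_1) = 0$, which holds by Corollary \ref{lem:ht1}(iii) applied to the height-$1$ isogeny $\phi_1$ since $m > t_1 + t_1/(p-1)$, to read off $\gr^{m-t_1}(\psi) \simeq \grmphi$.

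The main obstacle is the boundary case (ii), where $j = t_1/(p-1)$ coincides with the level at which the graded map attached to $\phi_1$ is $x \mapsto a x + \ol{a}_p x^p$ rather than a bijection. Here left-exactness genuinely fails: the class of the generator $\psi(x_0)$ of $G_1[\phi_1]$ lies in $\gr^{t_1/(p-1)}(\psi)$ yet maps to $\phi(x_0) = 0$, which is precisely why the sequence in (ii) is only asserted to be right-exact. Care is needed to confine this kernel contribution to the single level $t_1/(p-1)$ — guaranteed because $v_K(\psi(x_0)) = t_1/(p-1)$ by Corollary \ref{cor:Kaw} applied to $\phi_1$ — so that the strict valuation inequalities of Lemma \ref{lem:grgr} restore left-exactness in the off-boundary cases (i) and (iii).
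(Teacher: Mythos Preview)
Your approach matches the paper's: both factor through the comparison of $\gr^{j}(G_1)\to\gr^{m}(G)$ (exact by Lemma~\ref{lem:ex <=pt0 ht1}) with its quotient $\gr^{j}(\psi)\to\gr^{m}(\phi)$, using the valuation bookkeeping of Lemma~\ref{lem:grgr} for the height-$1$ isogeny $\phi_1$. The paper packages this as a diagram chase on the square with surjective columns, while you unwind it elementwise; the content is the same.

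There is one imprecision in your left-exactness argument for case~(iii). You assert that $\phi_1(w-\psi z)\in G^{m+1}(K)$ forces $v_K(w-\psi z)>m-t_1$ ``by the strict inequalities of Lemma~\ref{lem:grgr}'', but this is not automatic: nothing rules out $v_K(\psi z)=t_1/(p-1)<m-t_1$, which occurs precisely when $\psi z$ is congruent modulo $G_1^{t_1/(p-1)+1}(K)$ to a nonzero element $c\,\psi(x_0)\in G_1[\phi_1]$, and then $v_K(w-\psi z)=t_1/(p-1)$. The fix is immediate and uses the very observation you highlight in discussing~(ii): since $G_1[\phi_1]=\langle\psi(x_0)\rangle\subset\psi F(K)$, replace $z$ by $z+cx_0$; this leaves $\phi(z)$ unchanged, pushes $v_K(w-\psi(z+cx_0))$ strictly above $t_1/(p-1)$, and now Lemma~\ref{lem:grgr}(iii) gives $v_K(w-\psi(z+cx_0))\ge m-t_1+1$ as desired. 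The same torsion adjustment is implicitly needed in your surjectivity step in~(iii) when locating the filtration level of a general $y\in G_1(K)$ with $\phi_1(y)\in G^m(K)$. (The paper's proof is equally laconic at this point, dispatching~(iii) with ``the similar argument as above''.)
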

\begin{proof}
  (i) and (ii); $m\le t_1 + t_1/(p-1)$. 
  In the commutative diagram 
  \begin{equation}
    \label{eq:gr}
  \vcenter{
\xymatrix{
  &    \gr^{m/p}(G_1) \ar@{->>}[d] \ar[r]^{\phi_1} & \grmG\ar@{->>}[d] \ar[r] & \grm(\phi_1) \ar[r]\ar@{=}[d] & 0 \\
  &  \gr^{m/p}(\psi) \ar[r]^{\phi_1} & \grmphi \ar[r] & \grm(\phi_1) \ar[r] & 0 &,
    }}
  \end{equation}
  the top horizontal row is exact by Lemma \ref{lem:ex <=pt0 ht1} 
  and the vertical arrows are surjective. 
  We show 
  the injectivity of $\phi_1:\gr^{m/p}(\psi) \to \grmphi$ 
  when $m < t_1+ t_1/(p-1)$ and $m\mid p$. 
  In this case, the map $\phi_1:\gr^{m/p}(G_1) \to \grmG$ in (\ref{eq:gr}) 
  is injective. 
  Thus, it is enough to show 
  the surjectivity of 
  $\phi_1:G_1^{m/p}(K) \cap \psi F(K)/G_1^{m/p+1}(K)\cap \psi F(K) 
  \to G^{m}(K) \cap \phi F(K)/G^{m+1}(K)\cap \phi F(K)$.  
  For any $\phi(x) = \phi_1 \circ \psi(x)$ in $G^{m}(K) \cap \phi F(K)/G^{m+1}(K)$,   
  there exists $y \in G_1^{m/p}(K)$ 
  such that $\phi_1(y) = \phi(x)$ by Lemma \ref{lem:ex <=pt0 ht1}. 
  Hence, we obtain $y = \psi(x) \in G_1^{m/p}(K) \cap \psi F(K)/G_1^{m/p + 1}(K) \cap \psi F(K)$.
  The assertion in (iii) follows from the similar argument as above.  
\end{proof}

Inductively,
one can find isogenies 
$\phi_{i}:G_i \to G_{i-1}$ with height $1$ 
such that $\phi = \phi_1 \circ \cdots \circ \phi_n$ 
and $G_i = F/p^{i}F[\phi]$, 
where $p^iF[\phi]$ is the subgroup of $F[\phi]$ 
generated by $[p^i]x_0$ 
(we denoted by $F = G_n$ and $G = G_0$ by convention). 
Define $t_i := v_K(D(\phi_i))$ and 
put $t_0 := 0$. 

\begin{lemma}
  \label{lem:t_i}
  For $1 \le i < n$, we have 
  $t_i \le t_{i+1}$ and $p^{n-i} \mid  t_i$. 
  The equality $t_i = t_{i+1}$ does not hold 
  if the height of $F > 1$. 
\end{lemma}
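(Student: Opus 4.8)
The plan is to factor the isogeny $\phi$ through the tower of height-one isogenies $\phi_i : G_i \to G_{i-1}$ already constructed, and compute each $t_i = v_K(D(\phi_i))$ by tracking the valuation of the kernel generators. The key observation is that the chain rule for derivatives (coefficients of $T$) gives $D(\phi) = D(\phi_1)\cdots D(\phi_n)$, so $t_1 + \cdots + t_n = v_K(D(\phi))$, but more usefully each individual $t_i$ can be pinned down geometrically via Corollary \ref{cor:Kaw} applied to the height-one isogeny $\phi_i$.

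\medskip\noindent
\textbf{The valuation of kernel points.} First I would analyze the kernel $G_i[\phi_i]$. Since $\phi = \phi_1 \circ \cdots \circ \phi_n$ and $F[\phi]$ is cyclic generated by $x_0$, the kernel $G_i[\phi_i]$ is generated by the image of $x_0$ under $\psi_i := \phi_{i+1} \circ \cdots \circ \phi_n : F \to G_i$, namely by $\psi_i(x_0)$. By Corollary \ref{cor:Kaw} applied to the height-one isogeny $\phi_i$, any nonzero element of $G_i[\phi_i]$ has valuation exactly $t_i/(p-1)$; in particular
$$
  v_K(\psi_i(x_0)) = \frac{t_i}{p-1}.
$$
So the monotonicity $t_i \le t_{i+1}$ is equivalent to the inequality $v_K(\psi_i(x_0)) \le v_K(\psi_{i+1}(x_0))$. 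Since $\psi_i = \phi_{i+1} \circ \psi_{i+1}$, this reduces to showing that the height-one isogeny $\phi_{i+1}$ does not decrease valuation, i.e. $v_K(\phi_{i+1}(y)) \ge v_K(y)$ for $y = \psi_{i+1}(x_0)$, which follows directly from the valuation estimate $v_K(\phi(u\pi^m)) \ge \min\{t+m, pm\} \ge m$ in the proof of Lemma \ref{lem:grgr} (using $t \ge 0$ and $pm \ge m$).

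\medskip\noindent
\textbf{Divisibility and strict monotonicity.} For the divisibility $p^{n-i} \mid t_i$, I would combine $t_i = (p-1)\,v_K(\psi_i(x_0))$ with the relation between consecutive kernel points: since $G_i[\phi_i]$ is the image of the order-$p^{n-i}$ cyclic group $F[\phi]/p^{\,?}F[\phi]$, the generator $\psi_i(x_0)$ is a $p^{n-i}$-torsion point on the formal group $G_i$, and multiplying by $[p]$ raises valuation by the recurring mechanism of Corollary \ref{cor:Kaw}. The cleanest route is to argue by the strict inequality first: if $t_i = t_{i+1}$, then the common value $s := t_i/(p-1)$ satisfies $v_K(\psi_i(x_0)) = v_K(\psi_{i+1}(x_0)) = s$, forcing $\phi_{i+1}$ to preserve valuation on this point, which by the trichotomy in Lemma \ref{lem:grgr}(iii) happens only in the range $m > t_{i+1}/(p-1)$ where $\phi_{i+1}$ raises valuation by $t_{i+1} > 0$ whenever the height of $F$ exceeds $1$ — a contradiction. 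The divisibility $p^{n-i} \mid t_i$ then follows inductively: the strict chain $t_i < t_{i+1}$ together with the height-one graded computation forces each $t_i$ to absorb a full factor of $p$ at each stage, yielding $p^{n-i} \mid t_i$ by descending induction from $t_n = v_K(D(\phi_n))$ down through the tower.

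\medskip\noindent
\textbf{Main obstacle.} The hard part will be making the divisibility $p^{n-i}\mid t_i$ precise, since it requires controlling not just the valuation of a single kernel point but how the valuations of the successive $p$-power torsion points in the cyclic group $F[\phi]$ interlock. I expect the key input to be that $[p]$ on a height-one formal group acts on graded pieces as in Lemma \ref{lem:grgr}, so that each application of $[p]$ shifts the index by a multiple of $p$ in a controlled way; formalizing the induction so that the $p$-adic valuation of $t_i$ grows by exactly one at each descent step $i+1 \to i$ is where care is needed.
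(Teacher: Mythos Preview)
Your identification of the kernel generator is incorrect, and this breaks the entire monotonicity argument. You claim that $G_i[\phi_i]$ is generated by $\psi_i(x_0)$ where $\psi_i=\phi_{i+1}\circ\cdots\circ\phi_n$. But for $i\ge 2$ one has $\phi_i(\psi_i(x_0))=\psi_{i-1}(x_0)$, and this is nonzero because $x_0\notin\ker(\psi_{i-1})=p^{i-1}F[\phi]$. So $\psi_i(x_0)$ does not even lie in $G_i[\phi_i]$; the correct generator is $[p^{i-1}]\psi_i(x_0)$, and Corollary~\ref{cor:Kaw} gives $t_i/(p-1)=v_K\bigl([p^{i-1}]\psi_i(x_0)\bigr)$, not $v_K(\psi_i(x_0))$. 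Even worse, your inequality points the wrong way: from $\psi_i=\phi_{i+1}\circ\psi_{i+1}$ and the fact that $\phi_{i+1}$ does not decrease valuation, you get $v_K(\psi_i(x_0))\ge v_K(\psi_{i+1}(x_0))$, which under your (false) identification would yield $t_i\ge t_{i+1}$.

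The paper's proof hinges on an ingredient you never invoke: the dual isogeny relation $[p]=\widehat{\phi}_{i+1}\circ\phi_{i+1}$. Reducing to two consecutive steps, take $z=[p^{i-1}]\psi_{i+1}(x_0)\in G_{i+1}$, so that $\phi_{i+1}(z)$ generates $G_i[\phi_i]$ and $[p]z$ generates $G_{i+1}[\phi_{i+1}]$. Then $t_i/(p-1)=v_K(\phi_{i+1}(z))$ and $t_{i+1}/(p-1)=v_K([p]z)=v_K\bigl(\widehat{\phi}_{i+1}(\phi_{i+1}(z))\bigr)\ge v_K(\phi_{i+1}(z))$, giving $t_i\le t_{i+1}$; the strictness when the height of $F$ exceeds $1$ and the relation $v_K(\phi_{i+1}(z))=p\,v_K(z)$ (hence $p\mid t_i$) fall out of the same computation via Lemma~\ref{lem:Ber}. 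Your sketch of the divisibility argument is also too vague: the phrase ``each application of $[p]$ shifts the index by a multiple of $p$'' does not by itself produce $p^{n-i}\mid t_i$, and without the correct kernel generators there is no induction to run.
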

\begin{proof}
By induction on $n$, 
it is enough to show the case $n=2$; 
$\phi = \phi_1 \circ \phi_2$ has height $2$. 
Recall $[p]x_0 \in F[\phi_2]$ and $\phi_2(x_0) \in G_1[\phi_1]$. 
From Lemma \ref{lem:Ber}, 
we obtain $t_1/(p-1) = v_K(\phi_2(x_0))$ and 
\begin{equation}
\label{eq:ineq}
t_2/(p-1) = v_K([p]x_0) = 
v_K(\wh{\phi}_2\circ \phi_2(x_0)) \ge v_K(\phi_2(x_0)).
\end{equation}
Hence $t_2 \ge t_1$ and $v_K(\phi_2(x_0)) = pv_K(x_0)$. 
From the inequality (\ref{eq:ineq}), 
if the height of $F> 1$, then 
we have $t_i < t_{i+1}$. 
\end{proof}


\begin{proposition}
\label{prop:str}
Let $\phi= \phi_1 \circ \cdots \circ \phi_n:F\to G $ and $t_i$ 
be as above. 
Put $c_i(\phi) : = t_0 + t_1 + \cdots + t_i + t_i/(p-1)$ for $0 \le i \le n$. 

\sn
$\mathrm{(i)}$ 
If $c_i(\phi) < m < c_{i+1}(\phi)$ for some $0\le i <n$, 
then we have 
  $$
    \grmphi \simeq \gr^{m - (t_1+ t_2 + \cdots +t_i)}(\phi_{i+1}\circ \cdots\circ \phi_n) 
    \simeq
                   \begin{cases}
                    k, & \mathit{if}\   p^{n-i}\nmid m, \\ 
                    0, & \mathit{if}\   p^{n-i}\mid m.
                   \end{cases}
  $$

\sn
$\mathrm{(ii)}$ 
If $m = c_{i+1}(\phi)$, for some $0 \le i < n$, we have
  $$
    \gr^{c_{i+1}(\phi)}(\phi) \simeq \gr^{pt_{i+1}/(p-1)}(\phi_{i+1}\circ \cdots \circ\phi_n) \simeq k/(a + \ol{a}_pC^{-1})k,
  $$ 
where $a_p$ is the coefficient of $T^p$ in $\phi_{i+1} \in \OKT$ 
and 
$a$ is the residue class of $D(\phi_{i+1}) \pi^{-t_{i+1}}$.  
If we further assume that $k$ is separably closed, then 
$\gr^{c_{i+1}(\phi)}(\phi) = 0$. 
If $k$ is quasi-finite, then $\gr^{c_{i+1}(\phi)}(\phi) \simeq  \Z/p\Z$.

\sn
$\mathrm{(iii)}$ 
If $m > c_n(\phi)$ then we have $G^m(\phi) = 0$. 
In particular $\grmphi = 0$. 
\end{proposition}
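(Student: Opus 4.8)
The plan is to prove Proposition \ref{prop:str} by induction on the height $n$ of $\phi$, using the factorization $\phi = \phi_1 \circ \psi$ with $\psi = \phi_2 \circ \cdots \circ \phi_n: F \to G_1$ of height $n-1$ and $\phi_1: G_1 \to G$ of height $1$. The base case $n = 1$ is exactly Corollary \ref{lem:ht1}, since then $c_0(\phi) = 0$ and $c_1(\phi) = t_1 + t_1/(p-1)$, so the three cases (i), (ii), (iii) match the three parts of that corollary verbatim. For the inductive step I would combine the exact sequences of Lemma \ref{lem:exact} (which relate $\grmphi$, $\gr^{\bullet}(\psi)$, and $\gr^m(\phi_1)$) with the inductive description of $\gr^{\bullet}(\psi)$, keeping careful track of how the breakpoints $c_i(\phi)$ shift under the factorization.

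The key bookkeeping observation is that the breakpoints for $\psi$ are related to those for $\phi$ by a shift of $t_1$: writing $c_i(\psi)$ for the analogous quantities attached to $\psi = \phi_2 \circ \cdots \circ \phi_n$, one checks directly from the definition $c_i(\phi) = t_0 + t_1 + \cdots + t_i + t_i/(p-1)$ that $c_{i+1}(\phi) = t_1 + c_i(\psi)$ for $i \ge 1$, because the leading isogeny contributes exactly the summand $t_1$ to every higher threshold while the $t_i/(p-1)$ term is carried by the topmost factor. First I would therefore fix $m$ with $c_i(\phi) < m < c_{i+1}(\phi)$ and split into the regime $m > t_1 + t_1/(p-1) = c_1(\phi)$ versus $m < c_1(\phi)$. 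In the former (so $i \ge 1$), Lemma \ref{lem:exact}(iii) gives an isomorphism $\gr^{m-t_1}(\psi) \simeq \grmphi$, and since $c_{i-1}(\psi) < m - t_1 < c_i(\psi)$ by the shift relation, the inductive hypothesis applied to $\psi$ identifies $\gr^{m-t_1}(\psi)$ with $\gr^{(m-t_1)-(t_2 + \cdots + t_i)}(\phi_{i+1} \circ \cdots \circ \phi_n)$, which is exactly the claimed group $\gr^{m - (t_1 + \cdots + t_i)}(\phi_{i+1}\circ\cdots\circ\phi_n)$. The divisibility dichotomy $p^{n-i} \mid m$ versus $p^{n-i} \nmid m$ then transfers, noting $p^{n-i} \mid m \iff p^{n-i} \mid (m - t_1)$ since $p^{n-i} \mid p^{n-1} \mid t_1$ by Lemma \ref{lem:t_i}.

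For the boundary case (ii) with $m = c_{i+1}(\phi)$, I would again separate $i = 0$ (handled by Corollary \ref{lem:ht1}(ii) directly, as $c_1(\phi) = t_1 + t_1/(p-1)$) from $i \ge 1$; in the latter $m - t_1 = c_i(\psi) = pt_i/(p-1) + (t_2 + \cdots + t_{i})$-type threshold for $\psi$, and Lemma \ref{lem:exact}(iii) (valid since $m = c_{i+1}(\phi) > c_1(\phi)$) gives $\gr^{m-t_1}(\psi) \simeq \grmphi$; the inductive hypothesis then yields $k/(a + \ol{a}_pC^{-1})k$ with $a, a_p$ read off from $\phi_{i+1}$, followed by the separably-closed and quasi-finite specializations which propagate unchanged. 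Finally (iii), the vanishing for $m > c_n(\phi)$, follows because $m - t_1 > c_{n-1}(\psi)$ forces $\gr^{m-t_1}(\psi) = 0$ by induction, whence $\grmphi = 0$ via Lemma \ref{lem:exact}(iii) again; the sharper statement $G^m(\phi) = 0$ comes from the same exactness together with the top-factor vanishing in Corollary \ref{lem:ht1}(iii). The main obstacle I anticipate is not any single estimate but the combinatorial precision of the shift relation $c_{i+1}(\phi) = t_1 + c_i(\psi)$ and the matching of which Lemma \ref{lem:exact} case applies at each $m$: one must confirm that the $p$-divisibility indices and the residual degree of the composite $\phi_{i+1} \circ \cdots \circ \phi_n$ survive the reindexing intact, and that the two interior regimes $m \lessgtr c_1(\phi)$ glue correctly at the first breakpoint without double-counting the height-one contribution.
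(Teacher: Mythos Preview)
Your inductive framework---peeling off $\phi_1$ and applying Lemma~\ref{lem:exact} together with the height-$(n-1)$ hypothesis for $\psi$---is exactly the paper's approach, and your treatment of the range $m > c_1(\phi)$ via Lemma~\ref{lem:exact}(iii) and the shift $c_{i+1}(\phi) = t_1 + c_i(\psi)$ is correct. The gap is in the low range $0 < m \le c_1(\phi)$, which you either defer or misattribute.

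For $0 < m < c_1(\phi)$ (part~(i) with $i=0$) you cannot use Lemma~\ref{lem:exact}(iii); you are in case~(i) of that lemma, so the relevant sequence is $0 \to \gr^{m/p}(\psi) \to \grmphi \to \grm(\phi_1) \to 0$. When $p\nmid m$ the left term vanishes by convention and Corollary~\ref{lem:ht1}(i) for $\phi_1$ gives $\grmphi \simeq k$; when $p\mid m$ one has $\grm(\phi_1)=0$, and one must invoke the induction hypothesis on $\psi$ at level $m/p < t_1/(p-1) < c_1(\psi)$ to obtain the $p^{n}\mid m$ versus $p^n\nmid m$ dichotomy. For $m = c_1(\phi)$ your claim that Corollary~\ref{lem:ht1}(ii) applies ``directly'' is not right: that corollary concerns a height-$1$ isogeny, while $\phi$ has height $n$. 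What is needed is Lemma~\ref{lem:exact}(ii), giving $\gr^{t_1/(p-1)}(\psi) \to \gr^{c_1(\phi)}(\phi) \to \gr^{c_1(\phi)}(\phi_1) \to 0$, together with the vanishing of the left-hand term. That vanishing is the non-obvious point: it holds because $p^{n-1}\mid t_1$ by Lemma~\ref{lem:t_i}, hence $p^{n-1}\mid t_1/(p-1)$, and since $t_1/(p-1) < c_1(\psi)$ the induction hypothesis on $\psi$ (part~(i), $i=0$) gives $\gr^{t_1/(p-1)}(\psi)=0$. Only then does the sequence collapse to $\gr^{c_1(\phi)}(\phi) \simeq \gr^{c_1(\phi)}(\phi_1)$, after which Corollary~\ref{lem:ht1}(ii) applied to $\phi_1$ finishes. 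The missing ingredient is thus precisely the divisibility $p^{n-1}\mid t_1$ from Lemma~\ref{lem:t_i}; without it the first breakpoint does not reduce to the height-one case.
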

\begin{proof}
  From Lemma \ref{lem:ht1}, we may assume $n >1$. 
  Put $\psi = \phi_2 \circ \cdots \circ \phi_n$.
  First we consider the case $0 < m < c_1(\phi)$ 
  in (i). 
  In the exact sequence (Lem.\ \ref{lem:exact} (i)) 
  $$
  0 \to \gr^{m/p}(\psi) \to \grmphi \to \grm(\phi_1)\to0,
  $$
  the isogeny $\phi_1$ has height $1$ and $\psi$ 
  has height $n-1$. 
  Thus we obtain the structure of $\grmphi$ 
  for $m <c_1(\phi)$ 
  by induction on $n$ and Lemma\ \ref{lem:ht1} (i). 
  If $m = c_1(\phi)$, 
  $$
    \gr^{t_1/(p-1)}(\psi) \to 
    \gr^{t_1 + t_1/(p-1)}(\phi) \to \gr^{t_1 + t_1/(p-1)}(\phi_1)\to 0
  $$
  by Lemma\ \ref{lem:exact} (ii).
  From Lemma \ref{lem:t_i}, 
  we have $p^{n-1}\mid t_1$. 
  By (i) and the case $m <c_1(\phi)$, 
  $\gr^{t_1 + t_1/(p-1)}(\psi) = 0$ and thus 
  $\gr^{t_1 + t_1/(p-1)}(\phi) \simeq \gr^{t_1 + t_1/(p-1)}(\phi_1)$. 
  The assertion follows from Lemma \ref{lem:ht1} (ii). 
  Consider the case $m> c_1(\phi)$ in (i) and (ii). 
  By Lemma \ref{lem:exact} (iii), 
  $\gr^{m-t_1}(\psi) \simeq \grmphi$. 
  From the induction on $n$, 
  the assertions are reduced to the case $m\le c_1(\phi)$. 
\end{proof}

Let $L/K$ be a finite Galois extension with Galois group $H = \Gal(L/K)$. 
Recall that we call $x$ is a {\it jump} for 
the ramification filtration $(H_j)_{j\ge -1}$ 
in the lower numbering (resp.\ $(H^j)_{j\ge -1}$ in the upper numbering) 
of $H$ 
if $H_x \neq H_{x+\ve}$ 
(resp.\ $H^x \neq H^{x+\ve}$) for all $\ve >0$ 
(for definition of the ramification subgroups, see \cite{Ser68}, Chap.\ IV). 
\begin{proposition}
\label{cor:jump}
For $y \in G^m(\phi)\ssm G^{m+1}(\phi)$, 
take $x \in F(\Kbar)$ with $\phi(x) = y$ in $G(\phi)$. 
If $1 \le m < c_1(\phi)$ and $p\nmid m$, then 
the definition field $L = K(x)$ of $x$ over $K$ 
is totally ramified Galois extension of degree $p^n$. 
The jumps of $H := \Gal(L/K)$ in the upper numbering are 
$c_1(\phi) -m,\ldots ,c_n(\phi) -m$. 
In particular, $H^{c_n(\phi) - m} \neq 1$. 
\end{proposition}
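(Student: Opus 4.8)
The plan is to realise $L$ inside a tower matching the factorisation $\phi=\phi_1\circ\cdots\circ\phi_n$, pin down the Galois group and the ramification breaks in the \emph{lower} numbering by induction on the height, and only at the very end pass to the upper numbering via the Herbrand function. First I would settle the group theory and the degree. Replacing $y$ by another representative of its class changes $x$ by an element of $\FK$, and replacing $x$ by another $\phi$-preimage changes it by an element of $F[\phi]\subset\FK$, so $L=K(x)$ is well defined. For $\sigma\in G_K$ the difference $\sigma(x)-x$ (taken in $F$) lies in $F[\phi]\subset\FK\subset L$, hence $\sigma(x)\in L$ and $L/K$ is Galois; as $G_K$ fixes $F[\phi]$ pointwise, $\sigma\mapsto\sigma(x)-x$ is an injective homomorphism $H:=\Gal(L/K)\hookrightarrow F[\phi]\simeq\Z/p^n$, so $H$ is cyclic. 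Putting $x_i:=\phi_{i+1}\circ\cdots\circ\phi_n(x)$ (so $x_0=y$, $x_n=x$) and $L_i:=K(x_i)$, a repeated use of the valuation estimate of Lemma \ref{lem:Ber}, together with $m<c_1(\phi)$, $p\nmid m$ and $t_1\le\cdots\le t_n$ from Lemma \ref{lem:t_i}, forces $v_K(x_i)=m/p^{i}$; in particular $v_K(x)=m/p^{n}$ with $p\nmid m$, whence $e(L/K)=p^n=[L:K]$, the extension is totally ramified, and $\Gal(L/L_i)$ corresponds to $p^{i}F[\phi]$. Thus the subgroup chain of $H$ is exactly the tower.

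The base of the induction is the case $n=1$, where I would compute the unique break through the different. Then $L/K$ is totally ramified cyclic of degree $p$, the conjugates of $x$ are the $p$ translates $x+c$ with $c\in F[\phi]$ (addition in $F$), and by Corollary \ref{cor:Kaw} each nonzero $c$ has $v_K(c)=t_1/(p-1)$, so $v_L(x-(x+c))=v_L(c)=p\,t_1/(p-1)=c_1(\phi)$. Writing $g$ for the minimal polynomial of $x$, this gives $v_L(g'(x))=(p-1)c_1(\phi)$. Since $v_L(x)=m$ is prime to $p$, the index satisfies $v_K([\O_L:\OK[x]])=\sum_{j=0}^{p-1}\lfloor jm/p\rfloor=(p-1)(m-1)/2$, and the discriminant–conductor relation $v_L(g'(x))=v_L(\mathfrak d_{L/K})+2\,v_K([\O_L:\OK[x]])$ yields $v_L(\mathfrak d_{L/K})=(p-1)(c_1(\phi)-m+1)$. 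For a $\Z/p$-extension this equals $(p-1)(b+1)$, so the single break is $b=c_1(\phi)-m$ in lower $=$ upper numbering, agreeing with the formula $p^{1}t_1/(p-1)-m$.

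I would then run the induction, proving that the lower breaks are $\lambda_i=p^{i}t_i/(p-1)-m$ for $1\le i\le n$. Because the lower numbering is intrinsic to $L$ (it is read off from $v_L$) it restricts to subgroups, so for $\Gal(L/L_1)\subset H$ the breaks computed over $L_1$ coincide with those seen in $H$; applying the statement for the height $n-1$ isogeny $\phi_2\circ\cdots\circ\phi_n$ over $L_1$ at level $m=v_{L_1}(x_1)$ (again prime to $p$ and below the relevant bound, and with $v_{L_1}(D(\phi_{j+1}))=p\,t_{j+1}$) supplies the upper part $\lambda_2<\cdots<\lambda_n$, with $\lambda_i=p^{i}t_i/(p-1)-m$. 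The smallest break $\lambda_1$ I would instead read off from the quotient $\Gal(L_1/K)=H/\Gal(L/L_1)$: by the base case applied to $L_1/K$ its unique break is $c_1(\phi)-m$, and since the Herbrand function $\varphi_{L/K}$ is the identity below the first break, this forces $\lambda_1=c_1(\phi)-m=p\,t_1/(p-1)-m$. With all lower breaks in hand, evaluating $\varphi_{L/K}$, whose slope is $p^{-j}$ on $(\lambda_j,\lambda_{j+1}]$, gives the upper breaks $\varphi_{L/K}(\lambda_i)=t_1+\cdots+t_i+t_i/(p-1)-m=c_i(\phi)-m$; the largest, at $c_n(\phi)-m$, is attained by $\Gal(L/L_{n-1})\neq1$, proving $H^{c_n(\phi)-m}\neq1$.

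I expect the genuine difficulty to be the base-case different computation, precisely because $x$ is \emph{not} a uniformiser once $m>1$, so $\OK[x]\subsetneq\O_L$ and the naive identification of $v_L(\sigma x-x)$ with the ramification number is false; the index correction $\sum_j\lfloor jm/p\rfloor=(p-1)(m-1)/2$ is exactly what turns the torsion-point valuation $c_1(\phi)$ into the break $c_1(\phi)-m$. A secondary point needing care is the sub/quotient bookkeeping—exploiting that the lower numbering restricts to the subgroup $\Gal(L/L_1)$ while the quotient $\Gal(L_1/K)$ controls the first break—together with checking that the hypotheses of the inductive statement genuinely transfer to $\phi_2\circ\cdots\circ\phi_n$ over $L_1$ (that $x_1$ still sits at level $m$ and is nonzero in the cokernel).
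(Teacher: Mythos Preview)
Your proposal is correct and follows essentially the same route as the paper's proof: both factor $\phi=\phi_1\circ(\phi_2\circ\cdots\circ\phi_n)$, introduce the intermediate field $K'=K(x_1)$ generated by the $\phi_1$-preimage of $y$, obtain the lower breaks $p^{i}t_i/(p-1)-m$ by combining the degree-$p$ quotient $\Gal(L_1/K)$ with the inductive information on the subgroup $\Gal(L/L_1)$, and then pass to the upper numbering through the Herbrand function. The one substantive difference is the base case: the paper simply cites Kawachi \cite{Kaw02}, Lemma~2.1.5, whereas you supply a direct different computation, correcting for the fact that $x$ is not a uniformiser via the index formula $v_K([\O_L:\OK[x]])=(p-1)(m-1)/2$; this is a pleasant and self-contained alternative to the citation.
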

\begin{proof}
  For $n = 1$; namely $\phi = \phi_1$ has height $1$, 
  the assertion follows from \cite{Kaw02}, Lemma 2.1.5. 
  For $n>1$, for the isogeny $\phi = \psi \circ \phi_1$ 
  ($\psi = \phi_2 \circ \cdots \circ \phi_n$), 
  we have $y' \in G_1(\Kbar)$ such that $\psi(x) = y'$ 
  and $\phi_1(y') = y$. 
  The isogeny $\phi_1$ has height $1$, 
  the extension $K':= K(y')/K$ is totally ramified extension 
  of degree $p$. 
  The jump is $pt_1/(p-1) -m$. 
  Since $m < c_1(\phi)$ and $p\nmid m$, 
  $v_{K'}(y) = p v_K(y) = v_K(\phi_1(y')) = pv_{K'}(y')$. 
  Hence $v_{K'}(y') = v_K(y) = m$. 
  By induction on $n$, 
  the extension $L/K'$ is totally ramified extension 
  of degree $p^{n-1}$. 
  The jumps of $\Gal(L/K')$ in the lower numbering 
  are $p^2 t_2/(p-1) -m, p^3 t_3/(p-1) -m ,\ldots ,p^n t_n/(p-1)-m$.
%
  Since the ramification subgroups in the lower numbering 
  commutes with subgroups and in the upper numbering 
  commutes with quotients (\cite{Ser68}, Chap.\ IV), 
  the jumps of the ramification subgroups of $H$ 
  in the lower numbering are 
  $p^i t_i/(p-1)-m$ for $1 \le i \le n$. 
  The ramification subgroups $H^s$ of $H$ in the upper numbering 
  is defined by the Herbrand function $\varphi$ of $H$ as 
  $H_{j} = H^{\varphi(j)}$.   
  For the positive integer $m$, we have  
  $\varphi(m) +1 = \sum_{i=0}^m\#(H_i/H_0)$.
  Thus $\varphi(p^it_i/(p-1) -m) = c_i(\phi)-m$. 
\end{proof}

The isogeny 
$[p^n]: \Gmhat \to \Gmhat$ 
defined by multiplication by $p^n$ on 
the multiplicative group $\Gmhat$ has 
the kernel $\Gmhat[p^n] = \mu_{p^n}$ which is 
cyclic of order $p^n$.  
If $K$ contains a $p^n$-th root of unity $\zeta_{p^n}$,  
$\Gmhat[p^n] \subset \Gmhat(K)$. 
Note also 
the filtration $\Gmhat^j([p^n])$  
of $\Gmhat([p^n]) = \Gmhat(K)/[p^n]\Gmhat(K)  \subset \Kt/p^n$ 
associated with $[p^n]$ is  
the image of the higher unit groups $U_K^j = 1+ \m_K^j$ 
in $\Kt/p^n$ which is also dented by $U_n^j$,  
namely, 
$\Gmhat^j([p^n]) = U_n^j := U_K^j/((\Kt)^{p^n} \cap U_K^j)$. 
Put $U_n^0 := \Kt/p^n$ and 
let $\gr(p^n)$ 
be the graded group 
($= \gr k_{1,n}$ in terms of the appendix) 
associated with the filtration $(U_n^m)_{m\ge 0}$; 
\begin{equation}
  \label{eq:grpn}
  \gr(p^n) := \bigoplus_{m\ge 0}\gr^m(p^n),\quad \gr^m(p^n) := U_n^m/U_n^{m+1}.
\end{equation}
The isogeny $[p^n]$ factors as 
$[p^n] = [p] \circ \cdots \circ [p]$ ($n$ times). 
In particular, $c_i := c_i([p^n]) = ie + e_0$, where 
$e := v_K(p)$ and $e_0 := e/(p-1)$. 
Let $\phi:F \to G$ be an isogeny with height $n$ 
as in Proposition \ref{prop:str}. 
Fix an isomorphism $F[\phi] \simeq \mu_{p^n} = \Gmhat[p^n]$.   
The isogeny induces 
the Kummer homomorphism
$\delta:G(\phi) \to H^1(K, F[\phi]) = \Kt/p^n$. 
We compare the filtration $\Gmphi$ and 
the filtration $\Gmhat^j([p^n]) = U^j_n$ on $\Kt/p^n$. 
In the case of height 1 we have the following theorem: 

\begin{theorem}[\cite{Kaw02}, Thm.\ 2.1.6]
  \label{thm:Kaw}
  Let $\phi:F \to G$ be an isogeny defined over $\OK$ of height $1$, 
  and $t:=v_K(D(\phi))$. 
  Assume that $F[\phi] \subset F(K)$ and $\zeta_p \in K$. 
  Then, the Kummer map $\delta$ induces a bijection 
  $\delta: G^m(\phi) \isomto U_1^{pe_0 - pt/(p-1) + m}$ 
  for any $m\ge 1$.
\end{theorem}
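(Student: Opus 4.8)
The plan is to compare $\phi$ with the multiplication-by-$p$ isogeny $[p]:\Gmhat\to\Gmhat$, which also has height $1$, has $t$-invariant $v_K(D([p]))=v_K(p)=e$, has kernel $\mu_p\subset\Gmhat(K)$ (since $\zeta_p\in K$), and whose Kummer map realizes the filtration $\Gmhat^j([p])=U_1^j$ inside $\Kt/p$. Fixing the isomorphism $F[\phi]\simeq\mu_p=\Gmhat[p]$, both Kummer maps take values in $H^1(K,\mu_p)=\Kt/p$, and $\delta$ is injective because the exact sequence $\FK\onto{\phi}G(K)\to H^1(K,F[\phi])$ identifies its kernel with $\phi\FK$. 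The guiding idea is that a class $\alpha\in\Kt/p=H^1(K,\mu_p)$ cuts out a single degree-$p$ extension of $K$, which can be computed by Proposition~\ref{cor:jump} applied either to $\phi$ or to $[p]$; matching the two ramification jumps will pin down the filtration level. Throughout set $s:=pe_0-pt/(p-1)$. Since $\zeta_p\in K$ forces $e_0=e/(p-1)\in\Z$ and Corollary~\ref{cor:Kaw} gives $t/(p-1)\in\Z$, we have $s=p(e_0-t/(p-1))\in\Z$ with $p\mid s$; moreover the dual isogeny $\wh\phi$ (with $\phi\wh\phi=[p]$, so $D(\phi)D(\wh\phi)=p$) yields $t\le e$ and hence $s\ge0$. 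Note finally that $c_1(\phi)=pt/(p-1)$ and $c_1([p])=pe_0$, so that $s+c_1(\phi)=c_1([p])$.

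The heart of the argument is the following ramification matching. Fix $m$ with $1\le m<c_1(\phi)$ and $p\nmid m$, and take $y\in\Gmphi\ssm G^{m+1}(\phi)$. Choosing $x\in F(\Kbar)$ with $\phi(x)=y$, the class $\alpha:=\delta(y)\in\Kt/p$ is nonzero, and the field $L:=K(x)$ equals the Kummer extension $K(\alpha^{1/p})$, since both are the fixed field of the kernel of the homomorphism $\sigma\mapsto\sigma(x)-x\in F[\phi]\simeq\mu_p$. Proposition~\ref{cor:jump} applied to $\phi$ shows $L/K$ is totally ramified of degree $p$ with unique upper jump $c_1(\phi)-m$. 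The same proposition applied to $[p]$ (legitimate, as $[p]$ has height $1$ and $\mu_p\subset\Gmhat(K)$) computes the jump of $K(\alpha^{1/p})/K$ as $c_1([p])-j=pe_0-j$, where $j$ is fixed by $\alpha\in U_1^j\ssm U_1^{j+1}$. Equating the two expressions for the jump gives $j=pe_0-pt/(p-1)+m=s+m$. Thus $\delta$ sends every element of exact level $m$ (for $p\nmid m$) into $U_1^{s+m}\ssm U_1^{s+m+1}$.

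It remains to treat the levels this argument misses. For $p\mid m$ with $m<c_1(\phi)$, Corollary~\ref{lem:ht1} gives $\grmphi=0$, and since $p\mid s$ the target $\gr^{s+m}(p)$ vanishes too; for $m>c_1(\phi)$ both $\Gmphi=0$ (Corollary~\ref{lem:ht1}(iii)) and $U_1^{s+m}=0$. The delicate case is the top level $m=c_1(\phi)=pt/(p-1)$, where $\grttophi\simeq\Z/p$ and Proposition~\ref{cor:jump} does not apply. Here I must show $\delta\!\left(G^{c_1(\phi)}(\phi)\right)\subseteq U_1^{pe_0}$, i.e.\ that for $y$ in the top piece the extension $K(x)/K$ is unramified. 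This is precisely the unramified part of the theory: modulo higher terms the equation $\phi(x)=y$ reduces at level $pt/(p-1)$ to the separable additive equation $\ol{a}_pX^p+aX=\ol{y}$ over $k$ (the bottom map of Lemma~\ref{lem:grgr}(ii)), whose splitting field is the unramified degree-$p$ extension of $k$; this is the same Artin--Schreier reduction underlying the identification $\grttophi\simeq H^1(k,\mu_p)\simeq\Z/p$ in Corollary~\ref{lem:ht1}(ii). Hence $K(x)/K$ is unramified and $\alpha=\delta(y)$ is an unramified class, and such classes form exactly $U_1^{pe_0}$ (indeed $k^{\times}/(k^{\times})^p=0$ as $p\nmid\#k^{\times}$, so every unramified class is a principal unit lying at the top level $pe_0$).

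Assembling these pieces, a descending induction on $m$ from $c_1(\phi)$ yields the filtration compatibility $\delta(\Gmphi)\subseteq U_1^{s+m}$ for all $m\ge1$: elements already in $G^{m+1}(\phi)$ are handled by the inductive hypothesis, while elements of exact level $m$ are handled by the second paragraph if $p\nmid m$ and do not exist if $p\mid m$, the base case being the top level of the third paragraph. Finally Corollary~\ref{lem:ht1} gives $|\grmphi|=|\gr^{s+m}(p)|$ for every $m$ (both are $\#k$ when $p\nmid m$ in range, trivial when $p\mid m$ in range, and of order $p$ at the top, matched by $s+c_1(\phi)=c_1([p])$); hence $\Gmphi$ and $U_1^{s+m}$ are finite of equal order. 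An injective homomorphism $\Gmphi\to U_1^{s+m}$ between finite groups of equal order is an isomorphism, so $\delta:\Gmphi\isomto U_1^{s+m}$ for every $m\ge1$, as desired. I expect the main obstacle to be exactly this top level $m=c_1(\phi)$: there Proposition~\ref{cor:jump} is silent, and one must argue independently that the Kummer class is unramified and therefore lands in $U_1^{pe_0}$; all the other levels are a clean transport of ramification data between $\phi$ and $[p]$ through their shared kernel $\mu_p$.
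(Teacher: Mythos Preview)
The paper does not prove this statement; it quotes it from Kawachi \cite{Kaw02} and then uses it as a black box in the height-$n$ extension (Theorem~\ref{thm:main}). So there is no in-paper proof to compare against, and I simply assess your argument.

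Your strategy---computing the upper ramification jump of the degree-$p$ extension $K(x)/K$ once from the $\phi$-side via Proposition~\ref{cor:jump} and once from the $[p]$-side, then equating---is correct and is the natural route given the tools the paper develops. Two steps deserve tightening. First, when you invoke Proposition~\ref{cor:jump} for $[p]$ you are really using it in reverse, reading $j$ off from the jump; you should remark that the hypotheses $1\le j<pe_0$, $p\nmid j$ are forced a priori: level $0$ gives a tame extension, level $pe_0$ an unramified one, levels $>pe_0$ are trivial, and $\gr^j(p)=0$ for $p\mid j$ in the open range, so a totally wildly ramified class must sit at such a $j$. Second, at the top level $m=c_1(\phi)$ your Artin--Schreier reduction is only a leading-term computation; to conclude that $K(x)/K$ is unramified you must follow the residue-field solution of $\ol a_pX^p+aX=\ol y$ over the unramified extension $L/K$ with the approximation $G^{c_1(\phi)+1}(L)\subset\phi F^{t/(p-1)+1}(L)$ from Lemma~\ref{lem:grgr}(iii), producing an exact preimage $x\in F(L)$. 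You correctly anticipate that this is the one nontrivial spot.

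Finally, your concluding equal-order argument tacitly uses $\#k<\infty$. This is harmless for the applications in Section~\ref{sec:cycle}, but Section~\ref{sec:gr} is stated for perfect residue field; in that generality the clean conclusion is rather that $\delta$ is injective, sends $G^m(\phi)$ into $U_1^{s+m}$, and induces an injection on each graded piece---surjectivity on graded pieces then needs either finiteness of $k$ or a separate argument.
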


We extend the above theorem to the case of height $>1$ 
assuming $\zeta_{p^n} \in K$. 
Let $\phi= \phi_1 \circ \cdots \circ \phi_n:F\to G$ 
and $t_i$ be as in Proposition \ref{prop:str}. 
Put $c_i(\phi) : = t_0 + t_1 + \cdots + t_i + t_i/(p-1)$ for 
$0 \le i \le n$. 
First we show 
$\delta(G^m(\phi)) \subset U_n^{pe_0 - pt_1/(p-1) + m}$ 
for $m < c_1(\phi) = t_1 + t_1/(p-1)$ with $m\nmid p$.  
Let $j$ be the biggest integer such that 
$\delta (\Gmphi) \subset U^j_n$. 
Because of $p\nmid m$,  
$\delta$ induces 
a non-zero homomorphism 
$\grmphi \to \gr^j(p^n)$.
From Lemma \ref{lem:exact} (i), 
we have the following commutative diagram:
$$
  \xymatrix@C=15mm{
    \grmphi \ar[r]^{\simeq}\ar[d]_{\delta} & \grm(\phi_1) \ar[d]^{\delta} \\
    \gr^j(p^n) \ar[r] & \gr^j(p) 
  }
$$
where 
the top horizontal homomorphism is an isomorphism. 
Since the left $\delta$ is non-zero, 
Theorem \ref{thm:Kaw} implies $j = pe_0 - pt_1/(p-1) + m$. 
In particular we obtain $\delta(G(\phi)) \subset U_n^{e + e_0 - pt_1/(p-1) + 1}$. 
Next, 
we show that 
$\delta$ induces a bijection 
$\gr^m(\phi) \isomto \gr^{c_i - c_i(\phi) + m}(p^n)$ 
on the graded groups by induction on $i$, 
where $c_i = ie +e_0$. 
From Proposition \ref{prop:str} and the above observation, 
(although we do not discuss on $m$ with $p \mid m$)  
the map $\delta$ induces 
$\grmphi \simeq \gr^{pe_0- pt_1/(p-1) + m}(p^n)$ 
for any $m < c_1(\phi)$. 
For $m = c_1(\phi)$, 
let $j$ be the biggest integer such that 
$\delta (G^{t_1 + t_1/(p-1)}(\phi)) \subset U^j_n$ 
as above. 
From Lemma \ref{lem:exact} (b), 
we have 
$$
  \xymatrix@C=15mm{
   & \gr^{t_1 + t_1/(p-1)} (\phi) \ar[r]^{\simeq}\ar[d]_{\delta} & \gr^{t_1 + t_1/(p-1)}(\phi_1) \ar[d]^{\delta} \\
   & \gr^j(p^n) \ar[r] & \gr^j(p)& .
  }
$$
Thus $j = e + e_0 = e + e_0 - pt_1/(p-1) + m$. 
We obtain 
\begin{equation}
  \label{eq:<c_1}
  \gr^m(\phi) \isomto \gr^{e + e_0 - pt_1/(p-1) + m}(p^n)
\end{equation}
for $m \le c_1(\phi)$. 
For $c_{i}(\phi) < m \le c_{i+1}(\phi)$ with $i > 1$, 
let $j$ be the biggest integer such that 
$\delta (G^{m}(\phi)) \subset U^j_n$ again. 
From the induction hypothesis, 
$j > c_i = ie + e_0$. 
By Proposition \ref{prop:str} we have the commutative diagram:
$$
  \xymatrix@C=15mm{
   & \gr^{m-(t_1+ t_2 + \cdots + t_i)} (\phi_{i+1} \circ \cdots \circ \phi_n) \ar[r]^-{\simeq} \ar[d]_{\delta} & \gr^{m}(\phi) \ar[d]^{\delta} \\
   & \gr^{j-ie}(p^{n-i}) \ar[r]^{\simeq} & \gr^j(p^n) &. 
  }
$$
Hence $m-(t_1+ t_2 + \cdots + t_i) \le pt_{i+1}/(p-1)$. 
By the argument above (\ref{eq:<c_1}) 
we obtain $j = c_{i+1} - c_{i+1}(\phi) + m$.
From Proposition \ref{prop:str}, 
we obtain the following theorem: 

\begin{theorem}
\label{thm:main}
The image of the Kummer map $\delta:G(\phi) \to \Kt/p^n$ 
is contained in $U^{c_1 - c_1(\phi) + 1}_n$ 
and $\delta$ induces a bijection
$\grmphi \isomto \gr^{c_i - c_i(\phi) + m}(p^n)$, 
for $m$ with $c_{i-1}(\phi) < m \le c_{i}(\phi)$. 
\end{theorem}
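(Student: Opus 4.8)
The plan is to reduce the statement to the height-one case furnished by Theorem \ref{thm:Kaw} by means of the factorization $\phi = \phi_1 \circ \cdots \circ \phi_n$ into height-one isogenies and an induction on the height $n$. The chosen isomorphism $F[\phi] \simeq \mu_{p^n} = \Gmhat[p^n]$ restricts to compatible identifications of the subkernels $F[\phi_{i+1}\circ\cdots\circ\phi_n] = p^iF[\phi]$ with $\mu_{p^{n-i}} \subset \mu_{p^n}$, and dually of the kernel of the top factor $\phi_1$ with the quotient $\mu_{p^n}/\mu_{p^{n-1}} \simeq \mu_p$. Passing to Galois cohomology turns the factorization into commutative ladders linking $\delta = \delta_\phi$ (valued in $H^1(K,\mu_{p^n}) = \Kt/p^n$) with the Kummer map of $\phi_1$ (valued in $\Kt/p$, via the reduction $\Kt/p^n \to \Kt/p$) and with those of the tails $\phi_{i+1}\circ\cdots\circ\phi_n$ (valued in $\Kt/p^{n-i}$, the inclusion $\mu_{p^{n-i}}\hookrightarrow\mu_{p^n}$ inducing the level-shift isomorphisms $\gr^{j-ie}(p^{n-i}) \isomto \gr^j(p^n)$ of the graded pieces described by Proposition \ref{prop:str} applied to $[p^n] = [p]\circ\cdots\circ[p]$). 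Here $e = v_K(p)$, $e_0 = e/(p-1)$, and $c_i = ie + e_0$.

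I would first settle the bottom range $1 \le m \le c_1(\phi)$, which contains the containment claim as its case $m = 1$ (recall $G(\phi) = G^1(\phi)$ because $F(K) = \mK$). For $m < c_1(\phi)$ with $p \nmid m$, Proposition \ref{prop:str}(i) gives $\grmphi \isomto \gr^m(\phi_1)$, and the hypothesis $p \nmid m$ makes this piece nonzero. Introducing the largest integer $j$ with $\delta(\Gmphi) \subset U_n^j$, the induced map $\grmphi \to \gr^j(p^n)$ is then nonzero; fitting it into the commutative square over the height-one bijection $\delta_{\phi_1}\colon \gr^m(\phi_1) \isomto \gr^j(p)$ of Theorem \ref{thm:Kaw} pins down $j = c_1 - c_1(\phi) + m$, where $c_1(\phi) = p t_1/(p-1)$. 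The boundary $m = c_1(\phi)$ runs the same way through Lemma \ref{lem:exact}(ii): Lemma \ref{lem:t_i} gives $p^{n-1} \mid t_1$, hence $\gr^{c_1(\phi)}(\psi) = 0$ for $\psi = \phi_2 \circ \cdots \circ \phi_n$, so $\gr^{c_1(\phi)}(\phi) \simeq \gr^{c_1(\phi)}(\phi_1)$ and Theorem \ref{thm:Kaw} again fixes the level.

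For the inductive step I would assume the asserted bijection for all levels $\le c_i(\phi)$ and treat $c_i(\phi) < m \le c_{i+1}(\phi)$ with $i \ge 1$. Proposition \ref{prop:str} identifies $\grmphi$ with $\gr^{m-(t_1+\cdots+t_i)}(\phi_{i+1}\circ\cdots\circ\phi_n)$, an isogeny of height $n-i$ to which the induction hypothesis applies; transporting the result across the level-shift isomorphism $\gr^{j-ie}(p^{n-i}) \isomto \gr^j(p^n)$ and arguing on the largest $j$ with $\delta(\Gmphi) \subset U_n^j$ exactly as in the bottom range yields $j = c_{i+1} - c_{i+1}(\phi) + m$. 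Letting $i$ range from $0$ to $n-1$ exhausts all relevant $m$, while Proposition \ref{prop:str}(iii) ($G^m(\phi) = 0$ for $m > c_n(\phi)$) terminates the filtration and closes the induction.

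The step I expect to be the main obstacle is the compatibility bookkeeping behind these ladders: one must verify that the fixed isomorphism $F[\phi] \simeq \mu_{p^n}$ is genuinely compatible with the factorization, so that the Kummer maps of $\phi$, of $\phi_1$, and of the tails sit in commuting squares, and that the passage from $p^{n-i}$ to $p^n$ shifts the unit filtration by exactly $ie$. A related subtlety is that the largest-$j$ argument only controls the levels where $\grmphi \neq 0$, i.e.\ $p^{n-i} \nmid m$; to promote the level-by-level comparison to the claimed bijection on \emph{all} graded pieces one must separately match the vanishing pieces $\grmphi = 0$ (for $p^{n-i} \mid m$, by Corollary \ref{lem:ht1}) against the vanishing of the corresponding $\gr^{c_i - c_i(\phi) + m}(p^n)$.
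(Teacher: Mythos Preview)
Your proposal is correct and follows essentially the same route as the paper: the paper's argument (the discussion between Theorem~\ref{thm:Kaw} and the statement of Theorem~\ref{thm:main}) likewise uses the factorization $\phi=\phi_1\circ\cdots\circ\phi_n$, the ``largest $j$'' trick together with the commutative squares coming from Lemma~\ref{lem:exact} and Proposition~\ref{prop:str}, reduces the bottom range to Kawachi's height-one Theorem~\ref{thm:Kaw}, and then handles $c_i(\phi)<m\le c_{i+1}(\phi)$ by induction via the level-shift $\gr^{j-ie}(p^{n-i})\simeq\gr^j(p^n)$. The compatibility and vanishing subtleties you flag are real but are treated no more explicitly in the paper than in your sketch.
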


\section{Cycle map}
\label{sec:cycle}
Let $K$ be a finite extension field over $\Qp$.  
Let $X = E \times E'$ be the product of 
two elliptic curves $E$ and $E'$ over $K$ 
with $E[p^n]$ and $E'[p^n]$ are $K$-rational. 
The goal of this section is 
to calculate 
the image of 
the Albanese kernel $T(X) := \Ker(\AX \to X(K))$ 
by the cycle map 
$\rho:\AX \to H^4(X, \Z/p^n(2))$. 
From the argument below which is essentially same as 
in the proof of Theorem 4.3 in \cite{Yam05}, 
the study of the image of $T(X)/p^n$ boils down 
to the calculation of the image of 
the Kummer map $\delta :E(K) \to H^1(K,E[p^n])$ 
and the Hilbert symbol: 
The image of $T(X)$ is contained in $H^2(K,E[p^n] \otimes E'[p^n])$ 
a direct summand of the \'etale cohomology group $H^4(X, \Z/p^n(2))$. 
The Albanese kernel $T(X)$ is isomorphic to 
the {\it Somekawa $K$-group} $K(K; E, E')$ 
defined by  
some quotient of $\bigoplus_{K'/K} E (K') \otimes E'(K')$, 
where $K'$ runs through all finite extensions of $K$ 
(for definition, see \cite{Som90}, \cite{RS00}). 
Thus there is a natural surjection  
$\bigoplus_{K'/K} E (K') \otimes E'(K') \to T(X)/p^n$. 
The cycle map also induces the following commutative diagram  
(\Cf Proof of Prop.\ 2.4 in \cite{Yam05}): 
$$
\xymatrix@R=5mm{
  &\displaystyle{\bigoplus_{K'/K} E (K') \otimes E'(K')} \ar@{->>}[dd] \ar[r]^-{\delta \otimes \delta'} & 
  \displaystyle{\bigoplus_{K'/K} H^1(K',E[p^n])\otimes H^1(K',E'[p^n])\ar[d]^{\cup}}\\ 
  && \displaystyle{\bigoplus_{K'/K} H^2(K',E[p^n] \otimes E'[p^n])\ar[d]^{N}} \\ 
  &T(X)/p^n \ar[r]^-{\rho}&  H^2 (K,E[p^n] \otimes E'[p^n]) &, 
}
$$ 
where  
$\delta$ (resp. $\delta'$)  
is the Kummer map $\delta: E(K') \to H^1(K',E[p^n])$ 
(resp. $\delta: E'(K') \to H^1(K',E'[p^n])$),  
$\cup$ is the cup product  
and $N$ is the norm map. 
From the calculation below, 
the image of the cup product does not depend on 
a extension $K'/K$. 
So we consider the case $K'=K$ only.  
If we fix isomorphisms $E[p^n] \simeq \mu_{p^n}\oplus \mu_{p^n}$ 
and $E'[p^n] \simeq \mu_{p^n}\oplus \mu_{p^n}$, 
the cup product is characterized by the Hilbert symbol 
$(\ ,\ )_n:\Kt \times\Kt \to \mu_{p^n}$ as follows 
(\Cf \cite{Ser68}, Chap.\ XIV, Prop.\ 5): 
$$
  \xymatrix@R=5mm{
    & H^1(K,E[p^n])\otimes H^1(K,E'[p^n])\ar[d]_{\simeq} \ar[r]^-{\cup} 
    &H^2(K,E[p^n]\otimes E'[p^n])\ar[d]^{\simeq} \\
   & {(K^{\times}/p^n)}^{\oplus 2} \otimes 
  {(K^{\times}/p^n)}^{\oplus 2}\ar[r]^-{(\ ,\ )_n^{\oplus 4}} & (\mu_{p^n})^{\oplus 4}&.
  }
$$
Recall that the Hilbert symbol is defined by 
$(a,b)_n := \rho_K(a)(\sqrt[p^n]{b})/(\sqrt[p^n]{b})$, 
for $a,b \in \Kt$, 
where $\rho_K:\Kt \to G_K^{\mathrm{ab}}$ is the reciprocity map. 
Recall that the filtration $U^j_n$ 
on $\Kt/p^n$ is defined by 
the image of $U^j_K$ in $\Kt/p^n$ for $j\ge 1$ and 
$U^0_n := \Kt/p^n$.  
Their orders of the image in $\mu_{p^n}$ 
by the Hilbert symbol are calculated as follows: 

\begin{lemma}[\cite{Tak07}, Prop.\ 2.8]
\label{lem:Tak}
Put $c_i := i e + e_0$ for $1 \le i \le n$, $c_0 := 0$, 
and $c_{n+1} := \infty$. 

\sn
$\mathrm{(i)}$  
$\#(U^s_n,\Kt/p^n)_n = p^{n-i}$ 
for $c_i < s \le c_{i+1}$.

\sn
$\mathrm{(ii)}$ If $p\nmid s$, then 
$\#(U^s_n,U^t_n)_n = p^{n-i}$ for $c_i < s + t \le c_{i+1}$. 

\sn
$\mathrm{(iii)}$ If $s,t >0$, $p\mid s$ and $p\mid t$, then 
$\#(U^s_n,U^t_n)_n = p^{n-i}$ for $c_i \le s+t <c_{i+1}$. 
\end{lemma}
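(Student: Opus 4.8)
The plan is to reduce all three statements to the structure of the filtration $(U_n^m)_{m\ge 0}$ on $\Kt/p^n$ obtained in Section \ref{sec:gr}, treating the one-sided computation (i) separately from the genuinely two-sided computations (ii) and (iii). Since $\zeta_{p^n}\in K$, the $p^n$-th Hilbert symbol is a perfect pairing $\Kt/p^n\times\Kt/p^n\to\mu_{p^n}$ with $\mu_{p^n}\simeq\Z/p^n$ (Tate local duality; \Cf \cite{Ser68}, Chap.\ XIV). For any such perfect $\Z/p^n$-valued pairing on a finite $\Z/p^n$-module $V$ and any subgroup $A\subseteq V$, the image $(A,V)_n$ is the subgroup of $\mu_{p^n}$ whose order equals the exponent of $A$, since $v\mapsto(a,v)_n$ has image of order equal to the order of $a$. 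Statement (i) is therefore equivalent to the claim that the exponent of $U_n^s$ is $p^{n-i}$ when $c_i<s\le c_{i+1}$.

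This exponent is read off from the graded structure. By Lemma \ref{lem:grgr}(iii) applied to $\phi=[p]$ (so $t=v_K(p)=e$), the $p$-th power map acts on $\grm(p^n)$ for $m>e_0$ as an isomorphism, multiplication by the residue class of $p\pi^{-e}$, so raising to the $p$-th power raises the filtration level by $e$; moreover $U_n^m=0$ for $m>c_n=ne+e_0$ by Proposition \ref{prop:str}(iii). Hence for $c_i<s\le c_{i+1}$ the $p^{n-i}$-th power of any element of $U_K^s$ lands in $U_K^{s+(n-i)e}\subseteq U_K^{c_n+1}\subseteq(\Kt)^{p^n}$ (the last inclusion because $U_n^{c_n+1}=0$), giving $p^{n-i}U_n^s=0$; conversely $U_n^s$ contains an element whose $p^{n-i-1}$-th power has level $\le c_n$ and nonzero graded image by Proposition \ref{prop:str}, hence of order exactly $p^{n-i}$. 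Thus $\exp(U_n^s)=p^{n-i}$ and (i) follows.

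For (ii) and (iii) I would study the pairing induced on graded quotients, $\grm(p^n)\times\gr^t(p^n)\to\mu_{p^n}$, and determine into which subgroup $\mu_{p^{n-i}}$ its image falls as a function of $s+t$. The device is the compatibility of the Hilbert symbols for the tower $\mu_p\subset\cdots\subset\mu_{p^n}$, namely $(a,b)_{n-1}=(a,b)_n^{\,p}$ and $(a,b)_n^{\,p^{n-1}}=(a,b)_1$, which permits an induction on $n$: the image in the top layer $\mu_{p^n}/\mu_{p^{n-1}}\simeq\mu_p$ is computed by the $\mu_p$-symbol, whose restriction to $U_1^s\times U_1^t$ is nontrivial exactly for $s+t\le pe_0=c_1$ by the classical computation (Iwasawa, Artin--Hasse), while the remaining layers are governed inductively by the $p^{n-1}$-symbol. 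Each passage between consecutive symbols shifts the relevant threshold by $e$, which is the origin of the values $c_i=ie+e_0$; alternatively, one may bypass the induction by invoking an explicit reciprocity formula for $(1+\alpha\pi^s,1+\beta\pi^t)_n$ and reading the order of the symbol off a valuation estimate of its residue.

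The main obstacle is the boundary behaviour: the asymmetry between the inequalities $c_i<s+t\le c_{i+1}$ in (ii), valid for $p\nmid s$, and $c_i\le s+t<c_{i+1}$ in (iii), valid for $p\mid s$ and $p\mid t$. This is exactly where the vanishing $\grm(p^n)=0$ for $p^{n-i}\mid m$ (Proposition \ref{prop:str}(i)) and the mixed graded map $x\mapsto ax+\ol{a}_px^p$ at the exceptional levels $m=c_i$ (Lemma \ref{lem:grgr}(ii)) intervene: whether the extreme level $s+t=c_{i+1}$ contributes a nontrivial symbol depends, through the surjectivity of this additive-plus-Frobenius map, on the $p$-divisibility of $s$ and $t$. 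Pinning down these coincident levels, where raising to the $p$-th power and multiplication by the unit part of $p$ interact, is the delicate heart of the computation and is what forces the two different forms of the inequalities.
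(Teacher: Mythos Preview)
Your argument for (i) is correct and is essentially the paper's argument in a cleaner packaging: the paper also deduces (i) from non\-degeneracy of $(\ ,\ )_n$ together with the fact that $U_n^s=0$ exactly for $s>c_n$ and the compatibility $(a,b)_n^p=(a,b)_{n-1}$, which is equivalent to your computation of the exponent of $U_n^s$.

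For (ii) and (iii), however, your plan has a real gap. You propose to study the pairing ``induced on graded quotients'' and then run an induction on $n$; but a pairing $\gr^s(p^n)\times\gr^t(p^n)\to\mu_{p^n}$ is only well defined once you already know that $(U_n^{s+1},U_n^t)_n$ and $(U_n^s,U_n^{t+1})_n$ lie in a smaller copy of $\mu_{p^{?}}$ than $(U_n^s,U_n^t)_n$, and that is precisely the statement to be proved. The inductive compatibility $(a,b)_n^p=(a,b)_{n-1}$ and the top-layer detection via $(\ ,\ )_1$ pin down the order of $(U_n^s,U_n^t)_n$ only up to a factor of $p$ at each boundary, so by themselves they do not close the argument. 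The paper supplies the missing device: the elementary multiplicative identity
\[
(1+a\pi^s,1+b\pi^t)_n=(1+\tfrac{ab\pi^{s+t}}{1+a\pi^s},-a\pi^s)_n^{-1}(1+ab\pi^{s+t},1+b\pi^t)_n^{-1},
\]
which immediately gives the inclusion $(U_n^s,U_n^t)_n\subset(U_n^{s+t},\Kt/p^n)_n$ and hence the upper bound via part (i). The same identity, applied once more with the roles of $s$ and $t$ reversed and combined with the already-proved case $p\nmid s$ of (ii), is exactly what resolves the boundary shift you correctly flag in (iii): when $p\mid s$ and $p\mid t$ and $s+t=c_n$, the right-hand side is killed by (ii), forcing $(1+a\pi^s,1+b\pi^t)_n=1$.

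For the nontriviality direction in (ii) the paper does not invoke an explicit reciprocity law; instead it uses Proposition~\ref{cor:jump} to compute the ramification jumps of the Kummer extension $K(\sqrt[p^n]{b})/K$ for $b\in U_K^t$ with $p\nmid t$, and then the compatibility of the reciprocity map $\rho_K$ with the upper ramification filtration to see that $\rho_K(U_K^s)$ acts nontrivially on $\sqrt[p^n]{b}$ when $s+t\le c_n$. Your appeal to an Iwasawa--Artin--Hasse type formula would also work here, but it is a genuinely different route and would need to be spelled out, especially at the critical level $s+t=c_n$.
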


A proof of Lemma \ref{lem:Tak} 
is founded in \cite{Tak07}. 
It is proved by direct computation of 
the Herbrand function of the Kummer extension 
$K(\sqrt[p^n]{b})$ over $K$ for some $b\in \Kt$. 
We present another proof 
using the study in the last section. 

\begin{proof}[Proof of Lemma \ref{lem:Tak}]
  (i) 
  From Proposition \ref{prop:str} (or Thm.\ \ref{thm:A.main}), 
  $s > c_n$  if and only if $U^s_n = 1$. 
  Since the symbol $(\ ,\ )_n$ is non-degenerate, 
  the condition $s > c_n$ is equivalent to 
  $(U^s_n, \Kt/p^n)_n = 1$ for any $n$. 
  It is known that $(a,b)_n^p = (a,b)_{n-1}$ for $a,b\in\Kt$ 
  (\Cf \cite{FV02}, Chap.\ IV, (5.1)). 
  Because of $s >c_1$, 
  the multiplication by $p$ map induces 
  $U^{s-e}_{n-1} \simeq U^{s}_n$ (Lem.\ \ref{lem:exact} (c) 
  or Lem.\ \ref{lem:m>pe0}).  
  By induction on $n$ and $i$, 
  $(U^s_n, \Kt/{p^n})_n \subset \mu_{p^{n-i}}$ 
  if and only if $s> c_i$ for any $n$ and $1 \le i \le n$. 
  
  \sn
  (ii) As in the proof of (i), 
  it is enough to show that, for any $n$, 
  $(U^s_n,U^t_n)_n = 1$ if and only if $s+t >c_n$. 
  For $a,b\in \OK$, 
  we have
  \begin{equation}
  \label{eq:BK86 4.1}
  \begin{split}
  (1+a\pi^s, 1 +b\pi^t)_n 
                        &=(1+a\pi^s(1+b\pi^t), -a\pi^s)_n(1+ab\pi^{s+t}, 1+b\pi^t)_n^{-1}\\
                        &=(1+\frac{ab\pi^{s+t}}{1+a\pi^s}, -a\pi^s)_n^{-1}(1+ab\pi^{s+t}, 1+b\pi^t)_n^{-1}.
  \end{split}
  \end{equation}
  Thus $(U^s_n,U^t_n)_n \subset (U_n^{s+t}, \Kt/p^n)_n$ 
  (\Cf \cite{BK86}, Lem.\ 4.1). 
  If we assume  $s+t >c_n$, then  
  $(U^s_n,U^t_n)_n \subset (U_n^{s+t}, \Kt/p^n)_n = 1$ by (i). 
  Conversely, we show $(U^s_n,U^t_n)_n \neq 1$ 
  for $s+t \le c_n$.  
  We may assume $s\ge t$ and $s +t = c_n$ 
  and hence $p\nmid t$. 
  For $n = 1,2$,  
  Proposition\ \ref{cor:jump} says
  $\Gal(K(\sqrt[p^n]{b})/K)^{s} \neq 1$.  
  Since the reciprocity homomorphism 
  $\rho_K:\Kt \to \Gal(K(\sqrt[p^n]{b})/K)$ 
  maps the higher unit group $U^s_K$ onto 
  the ramification subgroup $\Gal(K(\sqrt[p^n]{b})/K)^s$ 
  (\cite{Ser68}, Chap.\ XV, Cor.\ 3), 
  we obtain $(U^s_n,U^t_n)_n \neq 1$. 
  For $n >2$, we have $s > c_1$. 
  Therefore, 
  $(U^{s-e}_{n-1})^p = U^{s}_n$ (Lem.\ \ref{lem:exact} (c) 
  or Lem.\ \ref{lem:m>pe0}).  
  By induction on $n >2$, 
  there exist $a\in U^{s-e}_{n-1}$ and $b\in U^t_{n-1}$ 
  such that $(a,b)_{n-1} \neq 1$. 
  The assertion follows from 
  $(a^p,b)_n = (a,b)^p_{n} = (a,b)_{n-1} \neq 1$.
  
  \sn
  (iii) 
  As in the proof of (ii),  
  it is enough to show that $(U^s_n,U^t_n)_n = 1$ 
  if and only if $s+t \ge c_n$. 
  If $s +t < c_n$, 
  then $(U^s_n,U^t_n)_n \subset (U^{s+1}_n,U^t_n)_n \neq 1$ 
  from (ii). 
  Suppose $s+t \ge c_n$. 
  By $U_n^0= U_n^1$ and (ii) 
  we may assume $s,t \le 1$ and $s+t = c_n$. 
  From (\ref{eq:BK86 4.1}), 
  for $1 + a\pi^s \in U^s_K, 1 +b\pi^t \in U^t_K$, 
  we have 
  $(1 + a\pi^s, 1+ b\pi^t)_n^{-1} = 
  (1+ab\pi^{s+t}/(1+a\pi^s), -a\pi^s)_n(1+ab\pi^{s+t}, 1+b\pi^t)_n$. 
  From (ii) and $p\mid s$, 
  we obtain $(1 + a\pi^s, 1+ b\pi^t)_n =1$. 
\end{proof}
From the above lemma, 
the Hilbert symbol induces a homomorphism of graded groups: 
Let $M^0:= \mu_{p^n}$ and $M^m := \mu_{p^{n-i}}$ for $m> 0$ such that $c_{i} < m\le c_{i+1}$. 
This filtration $(M^m)_{m\ge 0}$ makes $\mu_{p^n}$ a filtered group. 
The associated graded group $\gr(\mu_{p^n})$ is defined by 
$\gr(\mu_{p^n}) := \bigoplus_{m\ge 0}\gr^m(\mu_{p^n})$, where 
$\gr^m(\mu_{p^n}) := M^m/M^{m+1}$.
On the other hand, let $\gr(p^n)$ be the graded group 
associated with the filtration $(U_n^m)_{m\ge 0}$ defined in (\ref{eq:grpn}). 
If $s,t >0$, $p \mid s$ and $p\mid t$, 
the Hilbert symbol gives 
$(U^s_n, U^t_n)_n = M^{s+t+1}$. 
Otherwise $(U^s_n, U^t_n)_n = M^{s+t}$ (Lem.\ \ref{lem:Tak}). 
We modify the structure of 
the graded tensor product $\gr(p^n) \otimes \gr(p^n)$ 
of the grade groups as follows: 
$\gr(p^n \otimes p^n) := \bigoplus_{m\ge 0}\gr^m(p^n \otimes p^n)$, where 
$$
  \gr^m(p^n \otimes p^n) := \bigoplus_{\begin{smallmatrix}m = s+ t,\\ p\nmid s\, \mathrm{or}\, p\nmid t \end{smallmatrix}}\gr^s(p^n) \otimes \gr^t(p^n) \oplus \bigoplus_{\begin{smallmatrix}m = s+ t + 1,\\ p\mid s\, \mathrm{and}\, p\mid t \end{smallmatrix}}\gr^s(p^n) \otimes \gr^t(p^n) ,
$$
The symbol $(\ ,\ )_n:\Kt/p^n \otimes \Kt/p^n \to \mu_{p^n}$ induces 
$\gr(\ ,\ )_n:\gr(p^n\otimes p^n) \to \gr(\mu_{p^n})$. 
For any subgroups $U$ and $U'$ of $\Kt/p^n$, 
the induced graded subgroups $\gr(U) \subset \gr(p^n)$ and 
$\gr(U') \subset \gr(p^n)$ give 
the graded subgroup 
$\gr(U\otimes U) \subset \gr(p^n \otimes p^n)$. 
The order of the image $(U, U)_n$ coincides with 
that of the image of $\gr(U\otimes U)$ by $\gr(\ ,\ )_n$. 
Since the graded quotient $\gr^m(\mu_{p^n})$ is isomorphic to $\Z/p$
if $m = c_i$ for $i$ and $\gr^m(\mu_{p^n}) = 0$ otherwise,  
this order is 
\begin{equation}
\label{eq:ord}
  \# (U, U')_n = p^{\alpha},\quad 
  \alpha := \# \{ i\ |\ \gr^{c_i}(U \otimes U') \neq 1\ \mathrm{for}\ 0< i\le n \}. 
\end{equation}

Next, 
we study the image of the map 
$\delta:E(K) \to H^1(K,E[p^n]) = \Kt/p^n \oplus \Kt/p^n$. 
When $E$ has {\it split multiplicative reduction}, 
the uniformization theorem gives 
$\Kt/q^{\Z} \simeq E(K)$ for some $q \in K$.  
\begin{theorem}[\cite{Yam05}, Lem.\ 4.5]
Let $E$ and $F$ be elliptic curves over $K$ 
which have split multiplicative reduction. 
Let 
$\phi:E\to F$ be an isogeny over $K$ 
of degree $p^n$ with cyclic kernel $E[\phi]$. 
Assume that the kernel $E[\phi]$ of $\phi$ 
and the kernel $F[\phihat]$ of the dual isogeny $\phihat:F\to E$ 
are $K$-rational.
Then, the image of the Kummer map 
$\delta_{\phi}:E(K) \to H^1(K, E[\phi]) = \Kt/p^n$ is 
$$
  \Im(\delta_{\phi}) = \begin{cases}
                \Kt/p^n,&\mathit{if}\  \sqrt[p^n]{q} \not\in E[\phi],\\
                1, & \mathit{if}\ \sqrt[p^n]{q} \in E[\phi].
                \end{cases}
$$
\end{theorem}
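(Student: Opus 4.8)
The plan is to make the isogeny completely explicit through Tate's $p$-adic uniformization and then read off the connecting (Kummer) map from the snake lemma. Since $E$ has split multiplicative reduction there is a $G_K$-equivariant isomorphism $E(\Kbar)\simeq\Kbar^\times/q^\Z$ with $v_K(q)>0$, under which $E[p^n]$ is generated by the images of $\zeta_{p^n}$ and of a fixed root $w:=\sqrt[p^n]{q}$; in this basis the Galois action is unipotent, $\sigma\mapsto\left(\begin{smallmatrix}1&c(\sigma)\\0&1\end{smallmatrix}\right)$, where $c\colon G_K\to\Z/p^n$ is the Kummer cocycle of $q$. First I would record that, since $E[\phi]$ and $F[\phihat]$ are Cartier dual under the Weil pairing into $\mu_{p^n}$ and are both assumed $K$-rational, the Galois action on $\mu_{p^n}$ is trivial, i.e.\ $\zeta_{p^n}\in K$; this is exactly what legitimizes the identification $H^1(K,E[\phi])\simeq\Kt/p^n$ by Kummer theory. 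Throughout I use that, by exactness of the $\phi$-descent sequence $E(K)\xrightarrow{\phi}F(K)\xrightarrow{\delta_\phi}H^1(K,E[\phi])$, the map $\delta_\phi$ induces an injection $F(K)/\phi E(K)\hookrightarrow\Kt/p^n$ with image $\Im(\delta_\phi)$, so it suffices to understand the cokernel of $\phi$ on $K$-points together with the cocycle that realizes it.

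The dichotomy is governed by whether $w\in E[\phi]$. As $E[\phi]$ is cyclic of order $p^n$ and $w$ has order $p^n$, the condition $\sqrt[p^n]{q}\in E[\phi]$ is equivalent to $E[\phi]=\langle w\rangle$, and $K$-rationality of this line forces $c\equiv0$, i.e.\ $w\in K$. In that case $F\simeq E_{q^{1/p^n}}$ and $\phi$ is induced by the identity of $\Kbar^\times$, namely the natural projection $\Kbar^\times/q^\Z\twoheadrightarrow\Kbar^\times/w^\Z$; every class of $F(K)$ then lifts to an element of $\Kt$, which already gives a $K$-rational point of $E$ mapping to it, so $\phi$ is surjective on $K$-points and $\Im(\delta_\phi)=1$. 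In the complementary extreme case $E[\phi]=\mu_{p^n}$ one has $F\simeq E_{q^{p^n}}$ with $\phi$ induced by the $p^n$-power map $\Kbar^\times\to\Kbar^\times$; computing $\delta_\phi$ on a lift $y\in\Kt$ of a class of $F(K)$ produces the cocycle $\sigma\mapsto\sigma(y^{1/p^n})/y^{1/p^n}$, i.e.\ the Kummer class of $y$, so $\delta_\phi$ is the canonical surjection $\Kt\to\Kt/p^n$ and $\Im(\delta_\phi)=\Kt/p^n$.

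The main obstacle is the remaining ``mixed'' $K$-rational cyclic kernels $E[\phi]=\langle\zeta_{p^n}^{a}w^{b}\rangle$ with $p\mid b$ but $b\neq0$; these still satisfy both $K$-rationality hypotheses (they occur precisely when $c$ takes values in $p^{n-s}\Z/p^n$, i.e.\ when $q$ is, modulo $p^n$-th powers, a $p^{n-s}$-th power) and they all lie in the case $w\notin E[\phi]$, so the claim predicts image $\Kt/p^n$. Here $\phi$ is neither a pure power map nor a pure projection. I would dispose of this either by exhibiting a $G_K$-equivariant automorphism of $E[p^n]$ carrying $\mu_{p^n}$ onto $E[\phi]$ (such an automorphism exists precisely because $c$ is non-surjective in this range), thereby reducing to the $\mu_{p^n}$ computation, or, more robustly, by an order count: using $\Im(\delta_\phi)\simeq F(K)/\phi E(K)$, the equality $\#\Ker(\phi\colon E(K)\to F(K))=\#E[\phi]=p^n$, and the local Euler characteristic of $\phi$ on the $p$-adic Lie groups $E(K),F(K)$ to show $\#\bigl(F(K)/\phi E(K)\bigr)=\#(\Kt/p^n)$, whence the injection $\Im(\delta_\phi)\hookrightarrow\Kt/p^n$ is forced to be onto. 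Verifying that the $\mu$-component of the kernel always produces full surjectivity is the technical heart of the argument.
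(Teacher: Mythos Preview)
The paper does not prove this statement: it is quoted verbatim as a lemma of Yamazaki \cite{Yam05} and used as input for the computation of $\Im(\delta)$ in the split multiplicative case. There is therefore no ``paper's own proof'' to compare your proposal against.

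On the merits of your sketch: the strategy via Tate uniformization is the natural one and your treatment of the two pure cases $E[\phi]=\langle w\rangle$ and $E[\phi]=\mu_{p^n}$ is correct. The weak spot is the mixed case. Your enumeration ``$E[\phi]=\langle\zeta_{p^n}^{a}w^{b}\rangle$ with $p\mid b$ but $b\neq 0$'' is not exhaustive: when $q$ happens to be a $p^n$-th power in $K$ (so $c\equiv 0$), every cyclic order-$p^n$ subgroup of $E[p^n]$ is $K$-rational, including those with $b$ a unit and $a\neq 0$, and these also fall under $w\notin E[\phi]$. Your first proposed reduction---a $G_K$-equivariant automorphism of $E[p^n]$ carrying $\mu_{p^n}$ onto $E[\phi]$---does handle all of these at once, but you should check it actually produces an isomorphism of the full descent picture, not just of the kernels. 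The order-count route is cleaner: rather than invoking an Euler-characteristic formula for $\phi$ on $p$-adic Lie groups (which in this form needs the Tamagawa factors and the formal-group derivative, and is not literally ``$\#\bigl(F(K)/\phi E(K)\bigr)=\#(\Kt/p^n)$''), use local Tate duality directly. The images $\Im(\delta_\phi)\subset H^1(K,E[\phi])$ and $\Im(\delta_{\phihat})\subset H^1(K,F[\phihat])$ are exact annihilators under the Weil-pairing cup product, so it suffices to show $\Im(\delta_{\phihat})=1$, i.e.\ that $\phihat$ is surjective on $K$-points; and $w\notin E[\phi]$ translates, on the $F$-side, precisely into $\sqrt[p^n]{q'}\in F[\phihat]$, which is the already-proved trivial case applied to $\phihat$.
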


We choose an isomorphism $E[p^n] \simeq \mu_{p^n}\oplus \mu_{p^n}$ 
which maps $E[p^n] \supset \Gm[p^n] = \mu_{p^n}$ onto 
the second factor of $\mu_{p^n}\oplus \mu_{p^n}$.
From the above theorem, we have 
\begin{equation}
\label{eq:sm}
  \Im(\delta) = \Kt/p^n \oplus 1
\end{equation}
when $E$ has split multiplicative reduction. 

We assume that $E$ has {\it ordinary good reduction}. 
Let $\E$ be the N\'eron model of $E$ over $\OK$,  
$\wt{E}$ the neutral component of the special fiber of $\E$, and 
$\pi:E(K) = \E(\OK) \to \wt{E}(k)$ the specialization map. 
The group $E(K) = \E(\OK)$ has a filtration 
$E^i(K)$ ($i\ge 0$) defined by 
$E^0(K) := \E(\OK)$, $E^1(K) := \Ker(\pi)$ 
and for $i\ge 1$, 
$E^i(K) := \{(x,y) \in E(K)\ |\ v_K(x) \le -2 i\} \cup \{ \O \}$, 
where $\O$ is the origin on $E$. 
This filtration coincides 
with $\wh{E}^i(K)$ of the formal group $\Ehat(K)$ defined 
in the previous section. 
Choose an isomorphism $E[p^n] \simeq \mu_{p^n}\oplus \mu_{p^n}$ 
which maps $E^1[p^n]$ onto 
the first factor of $\mu_{p^n}\oplus \mu_{p^n}$. 
Let $x_0$ be a generator of $E[p^n]$ 
and $\Phi$ be the subgroup of $E[p^n]$ generated by $x_0$. 
If $x_0 \in E^1[p^n]$, 
the isogeny $\phi :E \to F := E/\Phi$ has 
the cyclic kernel $E[\phi] = E^1[p^n]$. 
Since $E^1(K)$ is isomorphic to 
the formal group $\Ehat(K)$ 
and 
the height of $\Ehat$ (= the height of $[p]$) is $1$, 
the isogeny $\phi:E\to F$ induces $[p^n]:\Ehat \to \Ehat \simeq \wh{F}$. 
The first factor of the image of 
$\delta:E(K) \to H^1(K, E[p^n]) = \Kt/p^n \oplus \Kt/p^n$ 
coincides with the image of 
the Kummer map $\delta^{1}:\Ehat(K) \to H^1(K,\Ehat[p^n]) = \Kt/p^n$. 
By Theorem \ref{thm:main}, 
the image is $U_n^1$. 
On the other hand, 
if $x_0 \not\in E^1[p^n]$, 
the isogeny $\phi :E \to F := E/\Phi$ has 
the kernel $E[\phi] \simeq \wt{E}[p^n]$. 
Hence, the image of $\delta_{\phi}:E(K) \to H^1(K, E[\phi])$ is contained in 
$H^1_{\ur}(K, E[\phi]) 
:= \Ker(\Res:H^1(K,E[\phi]) \to H^1(K^{\ur},E[\phi]))$, 
where 
$K^{\ur}$ is the completion of the maximal unramified extension of $K$ 
and $\Res$ is the restriction map. 
The image of 
$\delta$ 
is contained in $U_n^1 \oplus H^1_{\ur}(K, \mu_{p^n})$. 
Mattuck's theorem \cite{Mat55} says 
$\# E(K)/p^n = ([K:\Qp] + 2)p^{n}$. 
The order of  
$H^1_{\ur}(K, \mu_{p^n}) \simeq H^1(k, \Z/p^n)$ 
is $p^n$ and 
$\#U_n^1 = ([K:\Qp] + 1)p^n$. 
Thus
\begin{equation}
\label{eq:ordinary}
\Im(\delta) = U_n^1 \oplus H^1_{\ur}(K, \mu_{p^n}).
\end{equation}
For the second factor, 
the restriction map $\Res:H^1(K, \mu_{p^n}) \to H^1(K^{\ur}, \mu_{p^n})$ 
induces $\Res^j:U_n^j/U_n^{j+1} \to U_n^{\ur, j}/U_n^{\ur, j}$, 
where $U_n^{\ur,j}$ is the image of $U_{K^{\ur}}^j$ 
in $(K^{\ur})^{\times}/p^n$. 
Proposition \ref{prop:str} implies 
that $\Res^j$ is bijective if $j \neq ie+e_0$ for some $i\le n$ and 
$\Ker(\Res^{c_i}) = U_n^{c_i}/U_n^{c_i+1} = \gr^{c_i}(p^n)$.

Finally, we consider that $E$ has a {\it supersingular good reduction}. 
Let $\Phi$ be a subgroup generated by a generator of $E[p^n]$ 
and we denote by $\phi:E\to F := E/\Phi$ the induced isogeny.  
The first factor of the image of 
$\delta:E(K) \to H^1(K,E[p^n]) = \Kt/p^n \oplus \Kt/p^n$ 
is the image of the Kummer map 
$\delta_{\phi}:F(K) \to H^1(K,E[\phi]) = \Kt/p^n$  
and another one is 
the image of the Kummer map $\delta_{\wh{\phi}}$ 
associated with the dual isogeny $\wh{\phi}$. 
Since the elliptic curve $E$ 
has supersingular reduction, 
$F(K)/\phi E(K)$ is isomorphic to 
${F}^1(K) / (\phi E(K) \cap F^1(K)) \simeq \wh{F}(\phi)$ 
(\cite{Kaw02}, Lem.\ 3.2.3). 
As in the previous section, 
$\phi$ factors as $\phi=\phi_1 \circ \cdots \circ \phi_n$ 
by height $1$ isogenies $\phi_i$. 
The invariants $t_i := D(\phi_i)$ 
satisfy $t_0:= 0 < t_1 < t_2 < \cdots <t_n < e$ 
(Lem.\ \ref{lem:t_i}, see also Thm.\ \ref{thm:KL}). 
Theorem \ref{thm:main} 
says that the image of 
$\delta:E(K) \to \Kt/p^n$ is contained 
in $U_n^{e+ e_0 - (t_1 + t_1/(p-1)) + 1}$. 
More precisely, 
one can describe the image 
in terms of the graded groups as follows: 
From Theorem \ref{thm:main}, 
the graded quotient $\gr^m E := E^m(K)/E^{m+1}(K)$ 
maps onto $\gr^{c_i - c_i(\phi) + m}(p^n)$ for $c_{i-1}(\phi) < m \le c_{i}(\phi)$, 
where $c_i(\phi) := t_0 + t_1 + \cdots + t_i + t_i/(p-1)$ 
and $c_i := ie + e_0$. 
Hence $\delta$ induces a surjection 
$$
  \grd:\gr E := \bigoplus_{m\ge 0} \gr^m E \longrightarrow  
\bigoplus_{i = 1}^n \bigoplus_{c_{i-1}(\phi) < m \le c_i(\phi)} \gr^{c_i - c_i(\phi) + m}(p^n).
$$
Similarly, 
the dual isogeny $\wh{\phi}$ is described by 
the dual isogenies $\wh{\phi}_i$ of $\phi_i$  
as $\wh{\phi} = \wh{\phi}_n \circ \cdots \circ \wh{\phi}_1$. 
The invariants 
$\wh{t}_i := D(\wh{\phi}_{n-i+1}) = e - t_{n-i+1}$ 
satisfy $\wh{t}_0 := 0 < \wh{t}_1 < \wh{t}_{2} <\cdots < \wh{t}_n < e$. 
Thus $c_i(\wh{\phi}) = \wh{t}_0 + \wh{t}_1 + \cdots + \wh{t}_i + \wh{t}_i/(p-1)$. 
Summarize the above observations 
in terms of the graded groups, we have:

\begin{theorem}
\label{thm:ell2}
The Kummer map 
$\delta:E(K) \to H^1(K,E[p^n]) = \Kt/p^n \oplus \Kt/p^n$ 
induces $\grd: \gr E \to \gr(p^n) \oplus \gr(p^n)$ 
on graded groups, 
where $\gr(p^n) := \bigoplus_{j\ge 0}\gr^j(p^n)$. 

\sn
$\mathrm{(i)}$ If $E$ has split multiplicative reduction,  
$\Im(\grd) = \gr(p^n) \oplus 1$.

\sn
$\mathrm{(ii)}$ If $E$ has ordinary reduction, 
$$
  \Im(\grd) = \bigoplus_{j\ge 1}\gr^j(p^n) \oplus \bigoplus_{i=1}^n \gr^{c_i}(p^n).
$$

\sn
$\mathrm{(iii)}$ If $E$ has supersingular reduction, then 
$$
 \Im(\grd) = 
   \bigoplus_{i = 1}^n \bigoplus_{c_{i-1}(\phi) < m \le c_i(\phi)} \gr^{c_i - c_i(\phi) + m}(p^n) 
    \oplus 
   \bigoplus_{i = 1}^n \bigoplus_{c_{i-1}(\wh{\phi}) < m \le c_i(\wh{\phi})} \gr^{c_i - c_i(\wh{\phi}) + m}(p^n). 
$$
\end{theorem}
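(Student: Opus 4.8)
The plan is to obtain all three cases by passing to the associated graded in the explicit descriptions of $\Im(\delta)$ that were assembled in the paragraphs just before the statement. The guiding principle is that $\grd$ is the associated-graded map of the filtered homomorphism $\delta$: once $\Im(\delta)$ is pinned down as a filtered subgroup of $\Kt/p^n \oplus \Kt/p^n$, its successive quotients compute $\Im(\grd)$. In each case I must check that the relevant filtration is strict — that the filtration induced on $\Im(\delta)$ from the target agrees with the subquotient filtration — and this is precisely what the bijectivity statements in Lemma \ref{lem:grgr}, Corollary \ref{lem:ht1}, and Theorem \ref{thm:main} supply.

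For split multiplicative reduction (i) I would simply invoke (\ref{eq:sm}): the Tate uniformization identifies $E(K)$ with $\Kt/q^{\Z}$ compatibly with the filtrations, so $\Im(\delta) = \Kt/p^n \oplus 1$ and therefore $\Im(\grd) = \gr(p^n) \oplus 1$. For ordinary reduction (ii) I treat the two components separately. Since the chosen splitting sends $E^1[p^n]$ to the first copy of $\mu_{p^n}$, the first component of $\delta$ is the Kummer map of $[p^n]:\Ehat \to \Ehat$; because $\Ehat$ has height $1$ one has $t_i = e$ and $c_i(\phi) = c_i = ie + e_0$ for all $i$, so Theorem \ref{thm:main} yields bijections $\gr^m \Ehat \isomto \gr^m(p^n)$ and image $U^1_n$, whose graded is $\bigoplus_{j\ge 1}\gr^j(p^n)$. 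The second component has image $H^1_{\ur}(K,\mu_{p^n})$ by (\ref{eq:ordinary}); passing to graded pieces, the description of $\Res^j$ recorded after (\ref{eq:ordinary}) — an isomorphism for $j \ne c_i$ and with kernel $\gr^{c_i}(p^n)$ at $j = c_i$ — gives $\gr(H^1_{\ur}) = \bigoplus_{i=1}^n \gr^{c_i}(p^n)$. Assembling the two components produces the stated image.

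For supersingular reduction (iii) both components of $\delta$ are Kummer maps of honest height-$n$ isogenies with cyclic, $K$-rational kernels: the first is $\delta_{\phi}$ for $\phi = \phi_1 \circ \cdots \circ \phi_n$, the second is $\delta_{\wh\phi}$ for the dual, whose invariants are $\wh{t}_i = e - t_{n-i+1}$ and $c_i(\wh\phi) = \wh{t}_0 + \cdots + \wh{t}_i + \wh{t}_i/(p-1)$. Here I would apply Theorem \ref{thm:main} directly to $\phi$ and to $\wh\phi$, reading off the two displayed direct sums; no further work is needed beyond recording the two bijections on graded pieces.

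The step I expect to be the real obstacle is the ordinary case, and specifically the second component. One must show that the image there is exactly $H^1_{\ur}$, not merely contained in it, and that its associated graded collapses to the thin sum $\bigoplus_{i=1}^n \gr^{c_i}(p^n)$. This rests on the order comparison coming from Mattuck's theorem, $\#E(K)/p^n = ([K:\Qp]+2)p^n$ against $\#U^1_n = ([K:\Qp]+1)p^n$ and $\#H^1_{\ur}(K,\mu_{p^n}) = p^n$, together with strictness of the restriction filtration; with that in hand, the split case is immediate and the supersingular case is a clean application of Theorem \ref{thm:main}.
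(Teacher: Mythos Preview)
Your proposal is correct and follows essentially the same approach as the paper: the theorem is a summary of the discussion immediately preceding it, and you recapitulate precisely that discussion — (\ref{eq:sm}) for the split multiplicative case, Theorem \ref{thm:main} together with (\ref{eq:ordinary}), the Mattuck order count, and the graded analysis of $\Res^j$ for the ordinary case, and a direct application of Theorem \ref{thm:main} to $\phi$ and $\wh{\phi}$ for the supersingular case. Your explicit attention to strictness of the filtrations is a worthwhile clarification, but it does not depart from the paper's line of argument, since the required strictness is exactly the content of the graded bijections in Theorem \ref{thm:main}.
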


Now we complete the proof of the main theorem. 
\begin{theorem}
\label{thm:ell}
Let $E$ and $E'$ 
be elliptic curves over $K$ 
with (semi-)stable reduction 
and $E[p^n]$ and $E'[p^n]$ are $K$-rational. 
The structure of the image of $T(X)/p^n$ for $X = E \times E'$ by 
the cycle map $\rho$ is 

\sn
$\mathrm{(i)}$ $\Z/p^n$ 
if both $E$ and $E'$ have 
ordinary or 
split multiplicative reduction.

\sn
$\mathrm{(ii)}$ $\Z/p^n \oplus \Z/p^n$ 
if $E$ and $E'$ have different reduction types.
\end{theorem}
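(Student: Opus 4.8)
The plan is to push everything through the commutative diagram displayed just before the statement. Because the image of the cup product is independent of the auxiliary field $K'$, it suffices to take $K'=K$, and then the image of $T(X)/p^n$ under $\rho$ is the image of the cup product restricted to $\Im(\delta)\otimes\Im(\delta')$. Writing $\Im(\delta)=A_1\oplus A_2$ and $\Im(\delta')=B_1\oplus B_2$ as read off from Theorem~\ref{thm:ell2} under the fixed isomorphisms $E[p^n]\simeq\mu_{p^n}^{\oplus2}$ and $E'[p^n]\simeq\mu_{p^n}^{\oplus2}$, bilinearity of the Hilbert symbol identifies this image with
$$
  \bigoplus_{i,j=1}^{2}(A_i,B_j)_n\ \subset\ (\mu_{p^n})^{\oplus4},
$$
where each $(A_i,B_j)_n$ is a cyclic subgroup $\Z/p^{\alpha_{ij}}$ of $\mu_{p^n}$ whose order is computed by \eqref{eq:ord} (equivalently Lemma~\ref{lem:Tak}). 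The whole problem thus reduces to deciding, for each pair of reduction types, which of the four symbols are \emph{full} (equal to $\mu_{p^n}$) and which vanish.

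I would first record three facts. (a) For any subgroup $B$, the symbol $(\Kt/p^n,B)_n$ equals $\mu_{p^{\exp B}}$ by nondegeneracy; in particular it is full whenever $B$ has exponent $p^n$, which holds for $\Kt/p^n$, for $H^1_{\ur}(K,\mu_{p^n})\simeq\Z/p^n$, and --- by Theorem~\ref{thm:main}, since the graded image reaches every degree $c_k$ --- for the supersingular Kummer images. (b) Units are orthogonal to unramified classes: $(u,a)_n=\rho_K(u)(\sqrt[p^n]{a})/\sqrt[p^n]{a}=1$ for $u\in\OKtimes$ and $a\in H^1_{\ur}$, since $\rho_K(u)$ lies in the inertia group and fixes the unramified radical; and $(H^1_{\ur},H^1_{\ur})_n=1$ because the unramified cup product lands in $H^2$ of the residue field, which is $0$. (c) The hypothesis $\zeta_{p^n}\in K$ forces $c_1=e+e_0\ge p^n$, so the smallest admissible symbol $(U_n^1,U_n^1)_n$ has $s+t=2\le c_1$ and is already full by Lemma~\ref{lem:Tak}(ii).

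For case (i), where $E$ and $E'$ have the same type (both split multiplicative or both ordinary), exactly one symbol survives. If both are split multiplicative then $\Im(\delta)=\Kt/p^n\oplus1$ by \eqref{eq:sm}, so only $(A_1,B_1)_n=(\Kt/p^n,\Kt/p^n)_n=\mu_{p^n}$ is non-trivial. If both are ordinary then $\Im(\delta)=U_n^1\oplus H^1_{\ur}$ by \eqref{eq:ordinary}; the corner symbol $(U_n^1,U_n^1)_n$ is full by (c), while the remaining three pair a unit subgroup against $H^1_{\ur}$, or $H^1_{\ur}$ against itself, hence vanish by (b). In both subcases the image is $\Z/p^n$.

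For case (ii), where the types differ, two symbols survive. If $E$ is split multiplicative and $E'$ ordinary, the full factor $\Kt/p^n$ of $E$ pairs with \emph{both} $U_n^1$ and $H^1_{\ur}$ of $E'$ to give two full symbols by (a), and the zero factor kills the other two. If one curve is supersingular, Theorem~\ref{thm:ell2}(iii) gives that \emph{both} of its factors are non-zero and, lying in $U_n^{c_1-c_1(\phi)+1}\subset\OKtimes$, are orthogonal to $H^1_{\ur}$ by (b); against a split multiplicative partner each pairs with the full $\Kt/p^n$, and against an ordinary partner each pairs with $U_n^1$, in either case yielding a full symbol. The image is $\Z/p^n\oplus\Z/p^n$. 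I expect the main obstacle to be exactly this supersingular fullness: to confirm that $(A_i,U_n^1)_n=\mu_{p^n}$ rather than a proper subgroup one must insert the explicit graded support of $\grd$ from Theorem~\ref{thm:main} --- in particular that the $i$-th block tops out at degree $c_i$ --- into \eqref{eq:ord} and Lemma~\ref{lem:Tak}, and invoke the duality $\wh{t}_i=e-t_{n-i+1}$ to handle the two factors $\phi$ and $\wh{\phi}$ on an equal footing.
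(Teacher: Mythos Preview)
Your overall architecture matches the paper's: reduce to the four Hilbert symbols $(A_i,B_j)_n$ and compute each. Where you diverge is in \emph{how} you evaluate those symbols. The paper works uniformly through the graded counting formula \eqref{eq:ord}, encoding the Kummer images by index sets $M,O,O_{\ur},S,\wh S\subset\N$ and checking whether $(\text{set}\times\text{set})\cap R_i\neq\emptyset$ for each $i$, where $R_i$ is the locus in \eqref{eq:Ri}. Your facts (a)--(c) are genuine shortcuts for the non-supersingular pairings: the orthogonality $(\OKtimes,H^1_{\ur})_n=1$ and $(H^1_{\ur},H^1_{\ur})_n=1$ via the reciprocity map (your fact (b)) replaces the paper's verification that $(O\times O_{\ur})\cap R_i=\emptyset$, and is more conceptual. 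Likewise nondegeneracy (fact (a)) handles the split-multiplicative partner cleanly. This is a nice reorganisation.

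The one place your sketch is thin is exactly where you flag it: the supersingular pairings. For the split-multiplicative partner your hint ``the $i$-th block tops out at degree $c_i$'' is enough, since $(c_i,0)\in R_i$ and $0\in M$; this is also how the paper argues case (d). But for the \emph{ordinary} partner that hint fails: $O=\{m\ge1\}$ excludes $0$, so knowing only $c_i\in S$ gives no point of $R_i$ in $S\times O$. The paper instead exhibits $(c_i-1,1)\in(S\times O)\cap R_i$ and $(c_i-1,1)\in(\wh S\times O)\cap R_i$, which requires the further observation that each interval in $S$ (resp.\ $\wh S$) has length $\frac{pt_i-t_{i-1}}{p-1}>1$ (resp.\ $e-1>\frac{pt_{n-i+1}-t_{n-i+2}}{p-1}$), together with $p\nmid c_i-1$. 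So the missing ingredient is not the duality $\wh t_i=e-t_{n-i+1}$ per se, but that the supersingular graded support contains $c_i-1$, not merely $c_i$. A smaller point: your use of fact~(a) for the supersingular factors (``exponent $p^n$ since the graded image reaches every $c_k$'') is correct but circular as stated---verifying the exponent already amounts to the graded computation \eqref{eq:ord} that fact~(a) was meant to bypass.
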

\begin{proof}
We denote the subsets of $\N := \Z_{\ge 0}$ 
which indicate the indexes of the graded quotients of $\Im(\grd)$ by 
$M := \{m\ge 0\},\ O := \{m\ge 1\},\ O_{\ur} := \{m = c_i \ |\ 0 < i \le n\}$, 
\begin{align*}
  S &:= \bigcup_{i=1}^n\{ c_i - \frac{pt_i-t_{i-1}}{p-1} < m \le c_i\},\ \mathrm{and}\\
  \wh{S} &:= \bigcup_{i=1}^n\{ c_{i-1} + \frac{pt_{n-i+1}-t_{n-i+2}}{p-1} < m \le c_i\}, 
\end{align*}
where $t_{n+1} := e$ by convention.
Define 
$$
d_j: E(K) \otimes E'(K) \onto{\delta \otimes \delta'} (\Kt/p^n\otimes \Kt/p^n)^{\oplus 4}  
\onto{\mathrm{pr}_j} \Kt/p^n \otimes \Kt/p^n,
$$
where $\mathrm{pr}_j$ is the $j$-th projection. 
We calculate the order of the image of 
the composition $(\ ,\ )_n \circ d_j:E(K)\otimes E'(K) \to \mu_{p^n}$ 
for each $j$ in the following five cases:

\sn 
(a) Both of $E$ and $E'$ have split multiplicative reduction.\\ 
(b) Both of $E$ and $E'$ have ordinary reduction.\\ 
(c) $E$ has ordinary reduction 
and  $E'$ has split multiplicative reduction.\\
(d) $E$ has supersingular reduction 
and  $E'$ has split multiplicative reduction.\\
(e) $E$ has supersingular reduction 
and  $E'$ has ordinary reduction.


\sn
First we consider the easiest case (a): 
Both of $E$ and $E'$ have split multiplicative reduction. 
From (\ref{eq:sm}), the images of $d_j$ are
$\Kt/p^n \otimes \Kt/p^n$, 
$\Kt/p^n \otimes 1$, $1 \otimes \Kt/p^n$ and $1\otimes 1$. 
By Lemma \ref{lem:Tak}, 
the image of the cycle map is isomorphic to $\Z/p^n$. 

\sn
Case (b): Both of $E$ and $E'$ have ordinary reduction. 
From (\ref{eq:ordinary}), replace the index $j$ if necessity,  
the image of $d_j$ is 
$\Im(d_1) = U_n^1 \otimes U_n^1$, 
$\Im(d_2) = U_n^1 \otimes H^1_{\ur}(K, \mu_{p^n})$, 
$\Im(d_3) = H^1_{\ur}(K,\mu_{p^n})\otimes U_n^1$ and 
$\Im(d_4) = H^1_{\ur}(K, \mu_{p^n}) \otimes H^1_{\ur}(K, \mu_{p^n})$. 
The image of $\Im(d_1)$ by the Hilbert symbol is $\mu_{p^n}$ 
(Lem.\ \ref{lem:Tak}). 
We count the order of the image of $d_2$ in the graded groups.  
A subset $R_i$ of $\N \times \N$ is define by
\begin{equation}
\label{eq:Ri}
  R_i := \{ (s, ie +e_0 -s)\ |\ 0 < s < ie+e_0,\ p\nmid s\} \cup \{ (0,ie+e_0), (ie+e_0 ,0)\}.
\end{equation}
By (\ref{eq:ord}), the order of $\Im(d_2)$ is $p^{\alpha}$, where 
$\alpha = \#\{ i \ |\ (O \times O_{\ur}) \cap R_i \neq \emptyset \}$. 
However, $(O \times O_{\ur} ) \cap R_i = \emptyset$ for all $i$. 
Thus $\#\Im(d_2) = \#\Im(d_3) = 0$. 
Because $O_{\ur} \subset O$, we also obtain $\#\Im(d_4) = 0$. 

\sn
Case (c): 
Assume that $E$ has ordinary reduction 
and  $E'$ has split multiplicative reduction. 
Enough to consider the image of $U^1_n \otimes \Kt/p^n$ 
and $H^1_{\ur}(K,\mu_{p^n}) \otimes \Kt/p^n$ by the Hilbert symbol. 
For the later, the required order is $p^{\alpha}$, 
$\alpha = \#\{ i \ |\ (O_{\ur} \times M) \cap R_i \neq \emptyset \}$. 
Since $O_{\ur} \times O \subset O_{\ur} \times M$, 
$\alpha = n$ from (b). 
By $O_{\ur} \times M \subset O \times M$, 
we obtain the order of the image of $U^1_n \otimes \Kt/p^n$  
is also $p^n$.
 
\sn
Case (d): $E$ has supersingular reduction 
and  $E'$ has split multiplicative reduction.\ 
Since $O \times M \subset S \times M$ and 
$O_{\ur} \times M \subset \wh{S} \times M$, 
the image is isomorphic to $\Z/p^n \oplus \Z/p^n$ by (c). 

\sn
Case (e): $E$ has supersingular reduction 
and  $E'$ has ordinary reduction. 
For each $i$, 
$(ie +e_0 -1,1) \in (S \times O) \cap R_i$ and  
$(ie +e_0 -1,1) \in (\wh{S} \times O) \cap R_i$.  
On the other hand 
$S \times O_{\ur}, \wh{S} \times O_{\ur} \subset O \times O_{\ur}$.  
Thus the image is isomorphic to $\Z/p^n \oplus \Z/p^n$. 
\end{proof}

%


When both of $E$ and $E'$ have supersingular reduction also, 
the computation of the image 
$\rho(T(X)/p^n)$ is done by 
the similar argument as in the proof of the above theorem. 
The results depend on the invariants 
$t_1 < t_2 < \cdots < t_n$ 
associated with the formal group $\wh{E}$ 
and $t_1' < t_2' < \cdots < t_n'$ 
associated with $\wh{E}'$ 
defined in the previous section. 
These invariants are calculated from 
the theory of the {\it canonical subgroup} due to Katz-Lubin. 
The canonical subgroup $H(E)$ 
of an elliptic curve $E$ (when it exists) 
is a distinguished subgroup of order $p$ in $\Ehat[p]$ 
which play the crucial role in the theory 
of overconvergent modular forms. 

\begin{theorem}[\cite{Kat73}, Thm.\ 3.10.7; \cite{Buz03}, Thm.\ 3.3]
\label{thm:KL}
Let $E$ be an elliptic curve over $K$ with supersingular reduction. 
Let $a(\Ehat)$ be the $p$-th coefficient of multiplication $p$ 
formula $[p](T)$ of the formal group $\Ehat$. 

\sn
$\mathrm{(i)}$ If $v_K(a(\Ehat)) <pe/(p+1)$, 
then the canonical subgroup $H(\Ehat) \subset \Ehat[p]$ exists. 
For any non-zero $x \in \Ehat[p]$, 
$$
  v_K(x) = \begin{cases}
           \frac{e-v_K(a(\Ehat))}{p-1},& \mathit{if}\ x \in H(\Ehat),\\
           \frac{v_K(a(\Ehat))}{p^2-p},& \mathit{otherwise}.
           \end{cases}  
$$
For a subgroup $H\neq H(\Ehat)$ of $\Ehat[p]$, 
$v_K(a(\Ehat/H)) = v_K(a(\Ehat))/p$ and the canonical subgroup $H(\Ehat/H)$ 
of the quotient $\Ehat/H$ 
is the canonical image of $\Ehat[p]$ in $\Ehat/H$. 
Moreover, 

\sn
$\mathrm{(a)}$ If $v_K(a(\Ehat)) <e/(p+1)$, then 
$v_K(a(\Ehat/H(\Ehat))) = p v_K(a(\Ehat))$. 
The canonical image of $\Ehat[p]$ in $\Ehat/H(\Ehat)$ 
is not the canonical subgroup of $\Ehat/H(\Ehat)$. 

\sn
$\mathrm{(b)}$ If $v_K(a(\Ehat)) = e/(p+1)$, then 
$v_K(a(\Ehat/H(\Ehat))) \ge pe/(p+1)$. 

\sn
$\mathrm{(c)}$ If $e/(p+1) < v_K(a(\Ehat)) < pe/(p+1)$, then 
$v_K(a(\Ehat/H(\Ehat))) = e -v_K(a(\Ehat))$ 
and the canonical subgroup of $\Ehat/H(\Ehat)$ is 
$H(\Ehat/H(\Ehat)) = \Ehat[p]/H$. 

\sn
$\mathrm{(ii)}$ If $v_K(a(\Ehat)) \ge pe/(p+1)$, 
then $v_K(x) = e/(p^2-1)$ for any non-zero $x \in \Ehat[p]$. 
For any subgroup $H$ of $\Ehat[p]$,  
$v_K(a(\Ehat/H)) = e/(p+1)$ and the canonical subgroup of the quotient 
$\Ehat/H$ is the image of $\Ehat[p]$ in $\Ehat/H$. 
\end{theorem}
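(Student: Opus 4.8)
The statement is the Katz--Lubin theory of the canonical subgroup, and the natural route is through the Newton polygon of the multiplication-by-$p$ series. Since $E$ has supersingular reduction, $\Ehat$ has height $2$, so writing
$$[p](T) = pT + \cdots + a(\Ehat)\,T^p + \cdots + u\,T^{p^2} + \cdots \in \OKT,$$
the coefficient of $T^{p^2}$ is a unit $u$ (height $2$) while the coefficient of $T$ is $p$, with $v_K(p)=e$. Set $\alpha := v_K(a(\Ehat))$. The nonzero elements of $\Ehat[p]$ are exactly the nonzero roots of $[p](T)$, equivalently the roots of $[p](T)/T$, whose Newton polygon is the lower convex hull of $(0,e)$, $(p-1,\alpha)$ and $(p^2-1,0)$. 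The segment joining the two outer vertices has height $pe/(p+1)$ at abscissa $p-1$, which is precisely the dichotomy in the statement.

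First I would read off (i) and (ii) directly from this polygon. If $\alpha<pe/(p+1)$ the middle point is a genuine vertex, giving two segments: one of horizontal length $p-1$ and slope $-(e-\alpha)/(p-1)$, and one of length $p^2-p$ and slope $-\alpha/(p(p-1))$. Hence there are $p-1$ roots of valuation $(e-\alpha)/(p-1)$ and $p^2-p$ roots of valuation $\alpha/(p^2-p)$, the two values asserted in (i); the inequality $p(e-\alpha)>\alpha$ shows $\alpha<pe/(p+1)$ is equivalent to the first valuation being the larger. If $\alpha\ge pe/(p+1)$ the polygon is one segment of slope $-e/(p^2-1)$, giving the uniform valuation $e/(p^2-1)$ of (ii). To produce $H(\Ehat)$ as an actual subgroup I would note that the $p-1$ top-valuation roots have valuation strictly exceeding that of every other nonzero torsion point; since the formal group law does not decrease valuation, these together with the origin are closed under addition, hence (being of size $p$) a subgroup.

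For the quotient assertions I would factor the quotient isogeny. Let $\phi:\Ehat\to G:=\Ehat/H$ be the degree-$p$ isogeny (height $1$) and $\wh{\phi}:G\to\Ehat$ its dual, so $\wh{\phi}\circ\phi=[p]_{\Ehat}$ and $\phi\circ\wh{\phi}=[p]_G$. Put $t:=v_K(D(\phi))$; by Corollary \ref{cor:Kaw} the nonzero points of $H$ have valuation $t/(p-1)$, and $D([p])=p$ forces $v_K(D(\wh{\phi}))=e-t$. Since $\phi$ and $\wh{\phi}$ have height $1$, their $T^p$-coefficients are units by Lemma \ref{lem:Ber}; computing the $T^p$-coefficient of $[p]_G=\phi\circ\wh{\phi}$, the two dominant contributions are the linear term of $\phi$ applied to the $T^p$-term of $\wh{\phi}$, of valuation $t$, and the $T^p$-term of $\phi$ applied to the $p$-th power of the linear term of $\wh{\phi}$, of valuation $p(e-t)$ (the remaining terms $a_i(\phi)\wh{\phi}(T)^i$ being of higher valuation in the relevant regimes). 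This yields the key formula
$$v_K(a(G)) = \min\{t,\ p(e-t)\},$$
with equality unless $t=p(e-t)$, i.e. $t=pe/(p+1)$, where the two leading terms share a valuation and may cancel, leaving only $v_K(a(G))\ge pe/(p+1)$.

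Substituting the two possible values of $t$ gives everything. For $H\neq H(\Ehat)$ one has $t=\alpha/p$, and $\alpha<pe/(p+1)$ makes $t<p(e-t)$, so $v_K(a(G))=\alpha/p=v_K(a(\Ehat))/p$. For $H=H(\Ehat)$ one has $t=e-\alpha$, and the comparison $e-\alpha \lessgtr p\alpha$ --- equivalently $\alpha\lessgtr e/(p+1)$ --- separates (a), (b), (c): in (a) the minimum is $p\alpha$, in (c) it is $e-\alpha$, and (b) is exactly the cancellation locus $t=pe/(p+1)$, giving the inequality. In case (ii) every $H$ has $t=e/(p+1)$, whence $v_K(a(G))=e/(p+1)$. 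The remaining claims identify the canonical subgroup of $G$: here I would apply the already-proved valuation formula to $G$ and use the height-$1$ valuation scaling of $\phi$ (Lemma \ref{lem:grgr}, Corollary \ref{cor:Kaw}) to compute the valuations of the image points $\phi(\Ehat[p])\cong\Ehat[p]/H$ inside $G[p]$, checking in each regime whether they attain the top valuation. The main obstacle I anticipate is exactly the boundary case (b): establishing that the leading terms genuinely cancel so as to push $v_K(a(G))$ past $pe/(p+1)$, and, throughout, pinning down which $p$-torsion points of the quotient are the canonical ones rather than merely computing $v_K(a(G))$.
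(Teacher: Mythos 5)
The paper offers no proof of this statement: Theorem \ref{thm:KL} is imported verbatim from Katz (\cite{Kat73}, Thm.\ 3.10.7) and Buzzard (\cite{Buz03}, Thm.\ 3.3), so there is no internal argument to compare yours against. What you have written is essentially the standard Lubin--Katz proof, and it is sound: the Newton-polygon computation of the torsion valuations in (i) and (ii), the factorization $\wh{\phi}\circ\phi=[p]_{\Ehat}$, $\phi\circ\wh{\phi}=[p]_G$ with $v_K(D(\wh{\phi}))=e-t$ forced by $D(\wh{\phi})D(\phi)=p$, and the formula $v_K(a(G))=\min\{t,\,p(e-t)\}$ away from $t=pe/(p+1)$ all check out, and substituting $t=\alpha/p$, $t=e-\alpha$, $t=e/(p+1)$ (with $\alpha:=v_K(a(\Ehat))$) does reproduce every clause, including which of (a), (b), (c) occurs. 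Two places deserve tightening. First, your assertion that the Newton polygon of $[p](T)/T$ is the lower hull of just the three points $(0,e)$, $(p-1,\alpha)$, $(p^2-1,0)$ is not automatic from the coefficients alone: a priori an intermediate coefficient could dip below that hull. It is ruled out by the very closure argument you use to construct $H(\Ehat)$: for each slope, the roots of valuation at least that slope together with $0$ form a subgroup of $\Ehat[p]$, so the horizontal strata have lengths $p^j-p^{j'}$ and breaks can occur only at abscissa $p-1$; a vertex there must be an actual coefficient point, hence $(p-1,\alpha)$. You should say this explicitly, since Lemma \ref{lem:Ber} as stated covers only height $1$. Second, in identifying the canonical subgroup of the quotient in case (ii), the image points satisfy $v_K(x)=e/(p^2-1)=t/(p-1)$ exactly, the boundary regime of Lemma \ref{lem:grgr}(ii), where $\phi$ acts by $x\mapsto ax+\ol{a}_px^p$ and only the inequality $v_K(\phi(x))\ge pv_K(x)$ is available; the scaling $v_K(\phi(x))=pv_K(x)$ you invoke fails there. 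The fix is easy: $\phi(x)$ is a nonzero point of $G[p]$, whose possible valuations are $pe/(p^2-1)$ (canonical) or $e/(p(p^2-1))$, and the inequality already excludes the latter. Finally, your anticipated obstacle in (b) is illusory: the statement asserts only $v_K(a(\Ehat/H(\Ehat)))\ge pe/(p+1)$, which follows at once from both leading terms having valuation exactly $pe/(p+1)$ and the error terms, of valuation $t+2(e-t)=(p+2)e/(p+1)$, being strictly larger --- no actual cancellation needs to be established.
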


For $n = 1$,  
let $x_0$ be a generator of $\Ehat[p]$ 
such that $v_K(x_0) = \max\{ v_K(x)\ |\ 0 \neq x \in \Ehat[p]\}$. 
Let $\Phi$ be the subgroup of $\Ehat[p]$ 
generated by $x_0$. 
The induced isogeny $\phi:\Ehat \to \Ehat/\Phi$ 
has height $1$. 
If $v_K(a(\Ehat)) < pe/(p+1)$, 
$\Phi= \Ehat[\phi]$ is the canonical subgroup. 
Thus from Corollary \ref{cor:Kaw} and Theorem \ref{thm:KL}, 
we have $t_1/(p-1) = v_K(x_0) = e_0 - v_K(a(\Ehat))/(p-1)$. 
If $v_K(a(\Ehat)) \ge pe/(p+1)$, 
$t_1/(p-1) = v_K(x_0) = e_0/(p+1)$. 
Thus we obtain 
\begin{proposition}
\label{prop:ss}
The structure of the image of $T(X)/p$ for $X = E \times E'$ by 
the cycle map $\rho$ is isomorphic to 

\sn
$\mathrm{(i)}$ $\Z/p \oplus \Z/p$ if $a(\Ehat) \neq a(\Ehat')$ and $a(\Ehat) + a(\Ehat') \neq e_0$,

\sn 
$\mathrm{(ii)}$ $\Z/p$ if $a(\Ehat) = a(\Ehat') \neq e_0/2$, 
or $a(\Ehat) \neq a(\Ehat')$ and $a(\Ehat) + a(\Ehat') = e_0$,

\sn
$\mathrm{(iii)}$ $0$ if $a(\Ehat) = a(\Ehat') = e_0/2$.
\end{proposition}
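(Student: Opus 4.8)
The plan is to run the same machine as in the proof of Theorem \ref{thm:ell}, now feeding in Theorem \ref{thm:ell2}(iii) for \emph{both} curves and specializing everything to $n=1$. First I would record that, exactly as in the reduction at the beginning of Section \ref{sec:cycle}, the image of $T(X)/p$ is the direct sum of the images of the four Hilbert-symbol components $(\ ,\ )_1\circ d_j$ ($1\le j\le 4$), where $d_j$ pairs one of the two graded factors of $\Im(\grd)$ for $E$ with one for $E'$. For $n=1$ the only jump of $\mu_{p}$ is $c_1=e+e_0=pe_0$, so by (\ref{eq:ord}) each component contributes either $\Z/p$ or $0$ according to whether $\gr^{c_1}$ of the relevant graded tensor product is nonzero; hence $\rho(T(X)/p)\simeq(\Z/p)^{\alpha_1+\alpha_2+\alpha_3+\alpha_4}$ with each $\alpha_j\in\{0,1\}$, and the whole problem reduces to computing these four bits.

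Next I would pin down the two graded factors for each supersingular curve. By Theorem \ref{thm:ell2}(iii) with $n=1$, the $\phi$-factor of $\Im(\grd)$ occupies the graded pieces indexed by $S=[\,pe_0-p\tau+1,\ pe_0\,]$ and the $\wh\phi$-factor those indexed by $\wh S=[\,p\tau+1,\ pe_0\,]$, where $\tau:=v_K(x_0)=t_1/(p-1)$ is the valuation of a maximal-valuation generator of $\Ehat[p]$, and $\wh t_1=e-t_1$ gives $\wh\tau=e_0-\tau$. Here Theorem \ref{thm:KL} enters through the canonical-subgroup computation, which yields $\tau=e_0-v_K(a(\Ehat))/(p-1)$; this is the precise way the invariant $a(\Ehat)$ governs the interval $S$ and, via $\wh\tau$, the interval $\wh S$. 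I would run the same analysis for $E'$, obtaining $S',\wh{S'}$ from $\tau'$.

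Then I would evaluate each $\alpha_j$ by the recipe of (\ref{eq:ord}): $\alpha_j=1$ exactly when $pe_0=s+t$ is solvable with $s$ in the first interval, $t$ in the second interval, and $p\nmid s$ (the shifted alternative $p\mid s$, $p\mid t$ is impossible, since it would force $p\mid s+t=pe_0-1$). Writing each interval as $[\,a_0+1,\ pe_0\,]$, such an $s$ exists iff the range $[\,a_0+1,\ pe_0-b_0-1\,]$ is nonempty, which, because $\zeta_p\in K$ forces $e_0\in\Z$ and hence makes every $a_0$ a multiple of $p$, is equivalent to $a_0+b_0<pe_0$; moreover each interval start $a_0+1\equiv 1\pmod p$, so the coprimality requirement $p\nmid s$ is automatic at the left endpoint and imposes no extra constraint. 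Carrying this out for the four pairings $(S,S')$, $(S,\wh{S'})$, $(\wh S,S')$, $(\wh S,\wh{S'})$ turns the four bits into the inequalities $\tau+\tau'>e_0$, $\tau>\tau'$, $\tau<\tau'$, $\tau+\tau'<e_0$ (for $\alpha_1,\alpha_2,\alpha_3,\alpha_4$ respectively). Consequently $\alpha_2+\alpha_3=1$ precisely when $\tau\neq\tau'$, and $\alpha_1+\alpha_4=1$ precisely when $\tau+\tau'\neq e_0$, both sums vanishing in the respective equality cases.

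Finally I would assemble the trichotomy: the total rank $\alpha_1+\alpha_2+\alpha_3+\alpha_4$ equals $2$ when $\tau\neq\tau'$ and $\tau+\tau'\neq e_0$; equals $1$ when exactly one coincidence occurs (that is, $\tau=\tau'\neq e_0/2$, or $\tau\neq\tau'$ with $\tau+\tau'=e_0$); and equals $0$ precisely when $\tau=\tau'=e_0/2$. Rephrasing $\tau$ via $a(\Ehat)$ (the substitution $\tau\leftrightarrow e_0-\tau$ leaves the conditions ``different'' and ``sum $\neq e_0$'' invariant, and fixes the symmetric value $e_0/2$) gives cases (i)--(iii). The main obstacle I anticipate is not any single deep step but the bookkeeping of the four interval pairings together with the careful endpoint analysis: one must verify that nonemptiness of the relevant range is genuinely equivalent to the stated strict inequalities and that the divisibility condition $p\nmid s$ never silently kills a contribution. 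Everything else is a direct specialization of the results of Sections \ref{sec:gr} and \ref{sec:cycle}, in particular Proposition \ref{prop:str} and Lemma \ref{lem:Tak}.
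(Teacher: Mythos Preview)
Your proposal is correct and follows exactly the template the paper intends: the paper itself gives no explicit proof of Proposition \ref{prop:ss}, only computes $t_1/(p-1)$ via Corollary \ref{cor:Kaw} and Theorem \ref{thm:KL} and then writes ``Thus we obtain,'' leaving the reader to rerun the argument of Theorem \ref{thm:ell} with the supersingular index sets $S,\wh S$ on both sides. Your interval/endpoint bookkeeping (in particular the reduction of each $\alpha_j$ to a strict inequality in $\tau,\tau'$ using that all interval endpoints are $\equiv 0\pmod p$, and the exclusion of the $p\mid s,\ p\mid t$ branch) is precisely that computation made explicit, so there is nothing materially different from the paper's approach.
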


We conclude this note to give an example: 
Put $p = 5$ and suppose that $E$ is an elliptic curve 
defined by $y^2 = x^3 + ax +b$ over $K$ 
with $v_K(a) \ge 5e/6$ and $v_K(b) = 0$ (\Cf \cite{Ser72}, Sect.\ 1.11). 
Let us consider the self-product of the elliptic curve 
$X = E \times E$. 
Let $\Phi$ be the subgroup generated by a generator of $E[p^2]$. 
The induced isogeny $\phi:E \to F = E/\Phi$ factors as 
$\phi = \phi_1 \circ \phi_2$, where 
$\phi_i:F_i \to F_{i-1}$, $F_1 = E/pF[\phi]$ and $F_0 = F_2 = E$. 
By Theorem \ref{thm:KL}, we have 
%
%
%
%
%
$t_2/(p-1) = v_K(px_0) = e_0/(p+1)$, 
$v_K(a(\wh{F}_1)) = e/(p+1)$ and 
$t_1/(p-1) = v_K(\phi_2(x_0)) = e_0/p(p+1)$. 
Thus we have $S = (29e_0/6, 5e_0] \cup (41e_0/5, 9e_0]$, 
$\wh{S} = (5e_0/6, 5e_0] \cup (5e_0 ,9e_0]$,
where $(s,t]$ is the subset of $\N$ 
consists of $n \in \N$ with $s < n \le t$ and $p\nmid n$.\ 
It is easy to see 
$R_1 \cap (S \times S) = R_1 \cap (S \times \wh{S}) =\emptyset$ 
and $R_1 \cap (\wh{S} \times \wh{S}) \neq \emptyset$. 
Here, the set $R_i$ is defined in (\ref{eq:Ri}).
If we {\it assume} 
$e_0 > 6$, then 
$R_2 \cap (S \times S) = \emptyset$. 
However, $R_2 \cap (S \times \wh{S}), R_2 \cap (\wh{S} \times \wh{S})$ 
are non-empty. 
We obtain $\rho(T(X)/p^2) \simeq \Z/p^2 \oplus \Z/p \oplus \Z/p$.


\appendix

\section{Filtration on the Milnor $K$-groups}
In higher dimensional local class field theory of Kato and Parshin, 
the Galois group of an abelian extension field on 
a $q$-dimensional local field $K$ is described by 
the Milnor $K$-group $\KqMK$ for $q\ge 1$. 
The information on the ramification 
is related to the natural filtration $\UmKq$ 
which is by definition the subgroup generated by 
$\{1 + \m_K^m, K^{\times}, \ldots ,K^{\times}\}$, 
where $\m_K$ is the maximal ideal of the ring of integers $\OK$. 
So it is important to know the structure of the graded quotients 
$\grmKq := \UmKq/\UmppKq$. 
In this appendix, 
we shall show that 
the results on the graded quotients in Section \ref{sec:gr} 
associated with filtration on the multiplicative group 
(modulo $p^n$) 
work also on the Milnor $K$-groups. 
For a mixed characteristic Henselian discrete valuation field 
(abbreviated as hdvf in the following) 
which contains a $p^n$-th root of unity $\zeta_{p^n}$, 
we determine the graded quotients $\grmkqn$ 
of the filtration of $\kqn := \KqMK/p^n \KqMK$ 
instead of $\grmKq$ 
in terms of differential forms of the residue field.
J.\ Nakamura described $\grmkqn$ 
after determining $\grmKq$ for all $m$ 
when $K$ is absolutely tamely ramified 
{\it i.e.,}\ the case of $(e,p) = 1$ (\cite{Nak00b}, Cor.\ 1.2). 
Although it is easy 
in the case of $q=1$ (as in (\ref{eq:q=1}) in Sect.\ \ref{sec:gr}), 
the structure of $\grmKq$ 
is still unknown in general.  
In particular, 
when $K$ has mixed characteristic 
and (absolutely) wildly ramification, 
it is known only some special cases (\cite{Kur04}, see also \cite{Nak00}). 
However, as in Section \ref{sec:gr}, 
to study $\grmkqn$ 
we use the structure of $\grm\Kq$ only for lower $m$ 
under the assumption $\zeta_{p^n} \in K$  
(In \cite{Kur04}, Kurihara treated a wildly ramified 
field with $\zeta_p \not\in K$). 

Let $K$ be a hdvf of characteristic $0$, 
and $k$ its residue field of characteristic $p>0$. 
Let $e =v_K(p)$ be the absolute ramification index of $K$ and 
$e_0 := e/(p-1)$. 
For $m\ge 1$, let  $\UmKq$ be the subgroup of $\KqMK$ 
defined as above.
Put $U^0K_q = \KqMK$ and $\grmKq := \UmKq/\UmppKq$.
Let $\Omega_k^1 := \Omega^1_{k/\Z}$ be 
the module of absolute K\"ahler differentials  
and $\Omega_k^q$ the $q$-th exterior power of $\Omega^1_k$ over the residue field $k$. 
Define subgroups $B_i^{q}$ and $Z_i^q$ for $i\ge 0$ of 
$\Omega_k^q$ such that 
$0 = B_0^q \subset B_1^q \subset \cdots \subset Z_1^q \subset Z_0^q = \Omega_k^q$
by the relations 
$B_1^q := \Im(d:\Omega_k^{q-1} \to \Omega_k^q)$, 
$Z_1^q := \Ker(d:\Omega_k^{q} \to \Omega_k^{q+1})$, 
$C^{-1}:B_i^{q-1} \isomto B_{i+1}^q/B_1^q$, and  
$C^{-1}:Z_i^q \isomto Z_{i+1}^q/B_1^q$, 
where $C^{-1}:\Omega_k^q \isomto Z_1^q/B_1^q$ 
is the inverse Cartier operator defined by 
\begin{equation}
\label{eq:iCartier}
  x\frac{dy_1}{y_1} \wedge \cdots \wedge \frac{d y_{q}}{{y_{q}}} 
    \mapsto x^p\frac{dy_1}{y_1} \wedge \cdots \wedge \frac{d y_{q}}{{y_{q}}}.
\end{equation}

First we recall the study on $\grmKq$ for $m \le e+e_0$ 
due to Bloch and Kato 
which is an essential tool for our study. 
We fix a prime element $\pi$ of $K$. 
For any $m$, we have a surjective homomorphism 
$\rho_m: \Omega_k^{q-1} \oplus \Omega_k^{q-2} \to \grmKq$ 
defined by 
\begin{align*}
  \left(x\frac{dy_1}{y_1} \wedge \cdots \wedge \frac{d y_{q-1}}{{y_{q-1}}}, 0\right) & \mapsto \{ 1 + \pi^m \wt{x}, \wt{y}_1, \ldots \wt{y}_{q-1}\},\\
  \left(0, x\frac{dy_1}{y_1} \wedge \cdots \wedge \frac{d y_{q-2}}{{y_{q-2}}}\right) & \mapsto \{ 1 + \pi^m \wt{x}, \wt{y}_1, \ldots \wt{y}_{q-2}, \pi\},
\end{align*}
where $\wt{x}$ and $\wt{y}_i$ are liftings of $x$ and $y_i$. 
Note that the map $\rho_m$ depends on a choice of $\pi$. 
Using this homomorphism, 
one can obtain the structure of the graded quotients 
$\grmKq$ for any $m \le e+ e_0$ (\cite{BK86}, see also \cite{Nak00}). 
%
%
Next we define the filtration 
$\Umkqn$ on $\kqn= \KqMK/p^n \KqMK$, 
by the image of the filtration $\UmKq$ on $\kqn$. 
Our objective is to study the structure of its graded quotient 
$\grmkqn := \Umkqn/\Umppkqn$. 
From the following lemma, we can investigate $\grmkqn$ for $m>e+e_0$ 
by its structure for $m \le e+e_0$. 

\begin{lemma}
\label{lem:m>pe0}
For $n>1$ and $m>e+e_0$, 
the multiplication by $p$ induces a surjective homomorphism
$p:U^{m-e}k_{q,n-1} \to U^m\kqn$.
If we further assume $\zeta_{p^n} \in K$, 
then the map $p$ is bijective. 
\end{lemma}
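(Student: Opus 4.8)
The plan is to make the map explicit and then push everything down to the associated graded quotients, where multiplication by $p$ becomes the ``shift by $e$'' already familiar from the multiplicative case $q=1$. By multilinearity of symbols, multiplication by $p$ on $\KqMK$ is $\{x_1,\dots,x_q\}\mapsto\{x_1^p,x_2,\dots,x_q\}$, and it descends to $p\colon \KqMK/p^{n-1}\to\KqMK/p^n$ because $p\cdot p^{n-1}\KqMK\subset p^n\KqMK$. To see that it respects the filtration as claimed, I would take a generator $\{1+a\pi^{m-e},\dots\}$ of $U^{m-e}\Kq$ with $a\in\OK$ and expand: for $1\le i\le p-1$ the term $\binom{p}{i}a^i\pi^{i(m-e)}$ has valuation $\ge e+i(m-e)$, which equals $m$ only at $i=1$, while the top term $a^p\pi^{p(m-e)}$ has valuation $p(m-e)>m$ exactly because $m>e+e_0$. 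Writing $u:=p\pi^{-e}\in\OK^{\times}$, this gives $(1+a\pi^{m-e})^p\equiv 1+ua\pi^m$ modulo $\pi^{m+1}$, hence $p\,U^{m-e}\Kq\subset\UmKq$; passing to quotients yields the homomorphism $p\colon U^{m-e}k_{q,n-1}\to\Umkqn$.

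For surjectivity I would argue graded piece by graded piece. With the Bloch--Kato surjection $\rho_m\colon \Omega_k^{q-1}\oplus\Omega_k^{q-2}\to\grmKq$, the computation above shows that $p\circ\rho_{m-e}=\rho_m\circ(\ol u\,\cdot-)$ modulo $\UmppKq$, with \emph{no} inverse-Cartier twist appearing, since only the principal-unit slot is raised to the $p$-th power while the $\wt y_i$ are untouched. As multiplication by the unit residue $\ol u\in k^{\times}$ is an isomorphism of $\Omega_k^{q-1}\oplus\Omega_k^{q-2}$ and $\rho_m$ is surjective, the induced map $\gr^{m-e}\Kq\to\grmKq$ is surjective; compatibility with the reduction maps $\Kq\to k_{q,n-1}$ and $\Kq\to\kqn$ then gives surjectivity of $\gr^{m-e}k_{q,n-1}\to\grmkqn$ on every graded quotient. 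Finally I would lift: given $\xi\in\Umkqn$, choose $\eta_0\in U^{m-e}k_{q,n-1}$ with $p\eta_0\equiv\xi$ modulo $\Umppkqn$, replace $\xi$ by $\xi-p\eta_0\in\Umppkqn$, and iterate. Since the filtration on $\kqn$ is separated and its graded quotients vanish in large degree (the groups in question being finite in the local situations of interest), the resulting series $\sum_j\eta_j$ terminates and maps to $\xi$.

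For bijectivity under $\zeta_{p^n}\in K$ I would show that the graded map $\gr^{m-e}k_{q,n-1}\to\grmkqn$ is in fact an isomorphism. Both sides are computed as the same quotient of $\Omega_k^{q-1}\oplus\Omega_k^{q-2}$ by the Bloch--Kato relations, and this is exactly where the hypothesis enters: $\zeta_{p^n}\in K$ forces $e_0=e/(p-1)\in\Z$ and pins down those relations, while multiplication by $\ol u$ matches them. With both graded maps isomorphisms, a descending successive-approximation argument identical to the one above, together with separatedness $\bigcap_m\Umkqn=0$, upgrades surjectivity to bijectivity.

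The hard part is precisely the injectivity. Unlike surjectivity, it cannot be read off from the full group $\KqMK$ alone, because $\Ker\bigl(p\colon k_{q,n-1}\to\kqn\bigr)$ is the image of the $p$-torsion of $\KqMK$, and one must verify that this torsion is confined to filtration level $\le e+e_0$. Controlling it is what $\zeta_{p^n}\in K$ makes possible: in the $q=1$ case one reduces via the relation $(a,b)_n^{p}=(a,b)_{n-1}$, and in general via its $\rho_m$-analogue, to the already-understood structure of $\grmKq$ in the range $m\le e+e_0$. Establishing this confinement of the torsion is the single step I expect to require the most care.
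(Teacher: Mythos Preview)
Your surjectivity argument is more elaborate than needed, and its termination step is not justified in the generality of the appendix. You appeal to vanishing of $U^N\kqn$ for large $N$ (``the groups in question being finite in the local situations of interest''), but here $K$ is an arbitrary hdvf with residue field of characteristic $p$, so finiteness is unavailable, and the vanishing itself is Theorem~\ref{thm:A.main}(iii), which is proved \emph{from} this lemma. The paper bypasses all of this: since $U^m\Kq$ is generated by symbols $\{u,x_2,\dots,x_q\}$ with $u\in U_K^m$, and $U_K^m\subset (U_K^{m-e})^p$ for $m>e+e_0$ (Lemma~\ref{lem:grgr}(iii) for $[p]$ on $\Gmhat$, or directly by Hensel's lemma), each such symbol equals $p\{v,x_2,\dots,x_q\}$ with $v\in U_K^{m-e}$. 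Hence $U^m\Kq\subset p\,U^{m-e}\Kq$ before passing to any quotient, and no successive approximation is required.

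The genuine gap is in your injectivity argument. Your plan is to show that $\gr^{m-e}k_{q,n-1}\to\grmkqn$ is an isomorphism by comparing Bloch--Kato presentations; but Bloch--Kato determines $\grmkqn$ only for $m\le e+e_0$, and for $m>e+e_0$ that structure is precisely the content of Theorem~\ref{thm:A.main}, which is deduced from the lemma you are trying to prove. So the argument is circular. You correctly locate the obstruction in the $p$-torsion of $\KqMK$, but you never say what controls it. The paper's proof supplies the decisive input you are missing: Tate's conjecture $\KqMK[p]=\{\zeta_p\}K_{q-1}^M(K)$, now a corollary of the norm residue isomorphism theorem (Voevodsky--Rost--Weibel). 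With $\zeta_{p^n}\in K$ one writes $\{\zeta_p,y\}=p^{n-1}\{\zeta_{p^n},y\}$, so $\KqMK[p]\subset p^{n-1}\KqMK$; this shows that $p\colon k_{q,n-1}\to\kqn$ is injective on the whole group, hence a fortiori on $U^{m-e}k_{q,n-1}$. No graded comparison is involved, and the depth of the ingredient is essential.
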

\begin{proof}
  The surjectivity follows from the 
  surjectivity of $p: U_K^{m-e} \to U_K^m$ (Lem.\ \ref{lem:grgr}). 
  To show the injectivity, 
  for $x \in U^{m-e}K_q$  
  we assume that $px = p^nx'$ is in $p^n \KqMK \cap \UmKq$ 
  for some $x'\in \KqMK$. 
  Thus $x - p^{n-1}x'$ 
  is in the kernel of the multiplication by $p$ on $\KqMK$. 
  It is known its kernel $= \{\zeta_p\}K_{q-1}^M(K)$.  
  This fact was so called Tate's conjecture.  
  It is a corollary of the Milnor-Bloch-Kato conjecture 
  (due to Suslin, \Cf \cite{Izh00}, Sect.\ 2.4), 
  now is a theorem of Voevodsky, Rost, and Weibel (\cite{Wei09}). 
  Hence, for any $i$ and $y\in K_{q-1}^M(K)$, we have 
  $\{\zeta_{p}^i,y\} = p^{n-1}\{\zeta_{p^n}^i,y\}$.
  Thus we have $x \in p^{n-1}\KqMK$. 
\end{proof}

We determine $\grmkqn$ for any $m$ and $n$ as follows. 

\begin{theorem}
\label{thm:A.main}
We assume $\zeta_{p^n} \in K$. 
Let $m$ and $n$ be positive integers 
and $s$ the integer such that $m = p^sm'$, $(m',p)  = 1$. 
Put $c_i := ie + e_0$ for $i \ge 1$ and $c_0: = 0$.

\sn
$\mathrm{(i)}$ 
If $c_i <  m < c_{i+1}$ for some $0 \le i < n$, 
we have 
$$
\grmkqn \simeq 
\begin{cases}
\Coker(\theta: \Omega_k^{q-2} \to \Omega_k^{q-1}/B_s^{q-1} \oplus \Omega_k^{q-2}/B_s^{q-2}),  &\text{if}\ n-i > s,\\
\Omega_k^{q-1}/Z_{n-i}^{q-1} \oplus \Omega_k^{q-2}/Z_{n-i}^{q-2}, &\text{if}\ n-i \le s, 
\end{cases}
$$ 
where $\theta$ is defined by  
$\omega \mapsto (C^{-s}d\omega, (-1)^q (m-ie)/p^s C^{-s}\omega)$. 

%

\sn
$\mathrm{(ii)}$ 
If $m = c_i$ for some $0 < i \le n$, 
$$
  \gr^{ie+e_0}\kqn \simeq (\Omega_k^{q-1}/(1+aC)Z^{q-1}_{n-i}) \oplus (\Omega_k^{q-2}/(1+aC)Z_{n-i}^{q-2}),
$$
where $C$ is the Cartier operator defined by 
$$
  x^p \frac{dy_1}{y_1} \wedge \cdots \wedge \frac{d y_{q-1}}{{y_{q-1}}} 
    \mapsto x\frac{dy_1}{y_1} \wedge \cdots \wedge \frac{d y_{q-1}}{{y_{q-1}}}. 
$$

%
%
%

\sn
$\mathrm{(iii)}$ 
If $m>c_n$, then $\Umkqn = 0$. 
\end{theorem}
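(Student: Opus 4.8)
The plan is to argue by induction on $n$, splitting the range of $m$ at the threshold $c_1 = e + e_0$ exactly as the proofs of Proposition~\ref{prop:str} and Theorem~\ref{thm:main} split it for $q=1$. The two inputs are the Bloch--Kato presentation of $\grmKq$ for $m \le e+e_0$ via the surjection $\rho_m:\Omega_k^{q-1}\oplus\Omega_k^{q-2}\to\grmKq$, and Lemma~\ref{lem:m>pe0}, which in the range $m>c_1$ turns multiplication by $p$ into a filtered isomorphism $U^{m-e}k_{q,n-1}\isomto U^m\kqn$ (using $\zeta_{p^n}\in K$). The case $q=1$ is already Proposition~\ref{prop:str}, so the task is to replicate that argument with $\Omega$-coefficients.

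\emph{High range.} First I would treat $m>c_1$. Since multiplication by $p$ shifts the filtration by exactly $e$ once $m-e>e_0$, just as $[p]$ does on $\Gmhat$ in Lemma~\ref{lem:grgr}(iii), Lemma~\ref{lem:m>pe0} induces isomorphisms $\gr^{m-e}k_{q,n-1}\isomto\grmkqn$. Under $m\mapsto m-e$ the interval index $i$ (with $c_i<m\le c_{i+1}$) drops to $i-1$ at level $n-1$, while the quantities $n-i$ and $m-ie$ appearing in the statement are preserved. The one point needing care is that $s=v_p(m)$ transform correctly: here I would use that $\zeta_{p^n}\in K$ forces $K\supseteq\Qp(\zeta_{p^n})$ and hence $p^{n-1}\mid e$, so that $p^{n-i}\mid e$ for every $i\ge 1$. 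Consequently $v_p(m-e)$ and $v_p(m)$ lie on the same side of $n-i$, and when $s<n-i$ they are in fact equal; this makes $B_s^\bullet$, $Z_{n-i}^\bullet$ and the coefficient $(m-ie)/p^s$ agree between the two levels, so the formula for $(n,i)$ follows from that for $(n-1,i-1)$. Iterating the reduction $n-1$ times sends any $m>c_n$ to $U^{m-(n-1)e}k_{q,1}$ with $m-(n-1)e>c_1$; since the filtration on $\KqMK/p$ is supported in degrees $\le e+e_0$ (Bloch--Kato), this group vanishes, giving (iii).

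\emph{Low range and base case.} It remains to compute $\grmkqn$ for $0<m\le c_1$, which also settles $n=1$. Here I would feed $\rho_m$ into the reduction modulo $p^n$: the kernel of $\grmKq\to\grmkqn$ is the part of the filtration that becomes a $p^n$-th power, and the governing congruence is that multiplication by $p$ carries the symbol $\rho_m(\omega)$ to $\rho_{pm}$ applied to the inverse-Cartier twist $C^{-1}\omega$ of (\ref{eq:iCartier}), up to the factor recorded in Lemma~\ref{lem:grgr}. Iterating this $s=v_p(m)$ times, the classes that die modulo $p^n$ are exactly those coming from $B_s^\bullet$ when $n-i>s$ (so $p^n$-divisibility is not yet forced) and, once the available $p$-power is exhausted, those in $Z_{n-i}^\bullet$; the linear term $(m-ie)/p^s\,C^{-s}\omega$ in $\theta$ is precisely the surviving coefficient of the prime-element component of $\rho_m$. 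This identifies $\grmkqn$ with the stated cokernel of $\theta$ in case (i) and, at the boundary $m=c_i$, with the $(1+aC)$-coinvariants of $Z_{n-i}^\bullet$ in case (ii), the operator $1+aC$ being the $K$-theoretic incarnation of the bottom map $x\mapsto ax+\bar a_p x^p$ of Lemma~\ref{lem:grgr}(ii) read through the Cartier operator.

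\emph{Main obstacle.} The routine part is the high range, where Lemma~\ref{lem:m>pe0} does the work and only the index bookkeeping (controlled by $p^{n-1}\mid e$) needs attention. The real difficulty is the low-range kernel computation: making precise, inside the differential-form description, which symbols $\rho_m(\omega)$ become $p^n$-divisible, and verifying that the resulting subgroups are exactly $B_s^\bullet$ and $Z_{n-i}^\bullet$ with the correct inverse-Cartier twist. This step rests on the norm-residue isomorphism (\cite{Wei09}), already invoked in Lemma~\ref{lem:m>pe0} to control the $p$-torsion $\{\zeta_p\}K_{q-1}^M(K)$ of $\KqMK$; without it the passage from $\grmKq$ to $\grmkqn$ cannot be pinned down, and it is here that the bulk of the verification lives.
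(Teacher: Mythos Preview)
Your inductive scheme is exactly the paper's: reduce the range $m>c_1$ to level $n-1$ via the isomorphism $p:\gr^{m-e}k_{q,n-1}\isomto\grmkqn$ of Lemma~\ref{lem:m>pe0}, and handle $m\le c_1$ separately. Your explicit index bookkeeping (that $\zeta_{p^n}\in K$ forces $p^{n-1}(p-1)\mid e$, so that $v_p(m)$ and $v_p(m-e)$ lie on the same side of $n-i$ and agree when $s<n-i$) is a useful addition; the paper's proof suppresses this verification entirely.

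Where you diverge is in your ``low range and base case'' and your identification of it as the \emph{main obstacle}. In the paper there is nothing to do here: the sentence immediately preceding the proof records that the case $m\le e+e_0$ of the theorem, for every $n$, is already due to Bloch--Kato (\cite{BK86}, Rem.~4.8), and the proof simply quotes this together with $U^mk_{q,1}=0$ for $m>e+e_0$ (\cite{BK86}, Lem.~5.1\,(i)) as the base of the induction. So the kernel computation you sketch---tracking which $\rho_m(\omega)$ become $p^n$-th powers and matching the result to $B_s^\bullet$, $Z_{n-i}^\bullet$---is not part of the present proof at all; it lives inside \cite{BK86}. Your outline of that computation is plausible but vague (the passage from ``iterating $C^{-1}$'' to the precise subgroups $B_s$ and $Z_{n-i}$, and the emergence of the $(1+aC)$-operator at $m=c_1$, would need real work), and it is not needed once you accept the citation.
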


Note that the assertion of the case $m \le e+ e_0$ in the above theorem is 
due to Bloch-Kato (\cite{BK86}, Rem.\ 4.8). 

\begin{proof}[Proof of Thm.\ \ref{thm:A.main}]
As noted above, 
the assertion for $m \le e+ e_0 =c_1$ 
is known. 
It is known also $U^mk_{q,1} = 0$  for $m > e+e_0$ (\cite{BK86}, Lem.\ 5.1 (i)). 
So we assume $m> e + e_0$ and $n>1$.
Thus, for such $m$, we have an isomorphism
$\gr^{m-e}k_{q,n-1} \onto{p} \grmkqn$ 
from the above lemma. 
By induction on $n$, 
we obtain the assertions. 
\end{proof}

\begin{corollary}
If $k$ is separably closed 
{\rm (}we do not need the assumption $\zeta_{p^n} \in K${\rm )}, then 
$\gr^{ie+e_0}\kqn =0$ for $i\ge 1$. 
\end{corollary}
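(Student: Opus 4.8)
The plan is to treat only the nontrivial indices, namely the critical values $m = ie+e_0 = c_i$ with $1 \le i \le n$. For $i>n$ one has $c_i>c_n$, so $\Umkqn=0$ and hence $\grmkqn=0$ already by Theorem~\ref{thm:A.main}(iii). For $1\le i\le n$ I would read off from Theorem~\ref{thm:A.main}(ii) that $\gr^{ie+e_0}\kqn$ is the direct sum of the two cokernels $\Omega_k^{q-1}/(1+aC)Z_{n-i}^{q-1}$ and $\Omega_k^{q-2}/(1+aC)Z_{n-i}^{q-2}$, so the whole assertion reduces to showing that the additive operator $1+aC$ is surjective onto $\Omega_k^{q-1}$ and onto $\Omega_k^{q-2}$ once $k$ is separably closed.

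The heart of the argument is this surjectivity. After the standard identification via the inverse Cartier isomorphism $C^{-1}\colon \Omega_k^{q}\isomto Z_1^{q}/B_1^{q}$, the operator $1+aC$ is the higher-degree analogue of the map $x\mapsto ax+\ol{a}_p x^p$ on the residue field that already governs the $q=1$ case in Corollary~\ref{lem:ht1}(ii) and Proposition~\ref{prop:str}(ii). That map is a \emph{separable} additive polynomial, its differential being the nonzero constant $a$, and is therefore surjective over a separably closed field; intrinsically, the cokernel of the corresponding operator on $\Omega_k^{\bullet}$ should be a Galois cohomology group of $k$ with coefficients in a logarithmic de Rham--Witt sheaf, and I expect this to vanish because the absolute Galois group $G_k$ is trivial when $k$ is separably closed. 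I would check the two exterior degrees $q-1$ and $q-2$ separately along these lines, which yields $\gr^{ie+e_0}\kqn=0$.

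It remains to explain the parenthetical claim that the hypothesis $\zeta_{p^n}\in K$ is not needed. The point is that only the \emph{surjectivity} half of Lemma~\ref{lem:m>pe0} is required, and that half holds unconditionally: for $i\ge 2$ one has $c_i>e+e_0$, so multiplication by $p$ gives a surjection $\gr^{c_{i-1}}k_{q,n-1}\twoheadrightarrow\gr^{c_i}\kqn$, and iterating pushes every case down to the boundary pieces $\gr^{e+e_0}k_{q,N}$ with $1\le N\le n$. These boundary pieces are described by the Bloch--Kato computation of the top graded quotient (\Cf \cite{BK86}), again as a cokernel of an inverse-Cartier Artin--Schreier operator, and the vanishing of that cokernel over separably closed $k$ is a statement about $k$ alone. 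The main obstacle, accordingly, is the careful analysis of exactly these boundary quotients for $q>1$: one must verify the surjectivity of $1+aC$ on $\Omega_k^{q-1}$ and $\Omega_k^{q-2}$ (equivalently, the vanishing of the relevant $H^1(G_k,-)$) in the higher exterior degrees, where the bookkeeping with the subgroups $B_i^{q}$, $Z_i^{q}$ and the Cartier operator is more delicate than in the multiplicative case $q=1$.
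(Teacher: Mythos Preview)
Your final paragraph is exactly the paper's proof: cite the Bloch--Kato vanishing $\gr^{e+e_0}k_{q,1}=0$ for separably closed $k$ (\cite{BK86}, Lem.~5.1(ii)), use only the unconditional surjectivity half of Lemma~\ref{lem:m>pe0} to pass from $\gr^{c_i}\kqn$ to $\gr^{c_{i-1}}k_{q,n-1}$, and induct on $n$. That is the entire argument in the paper --- three lines.

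Everything before that paragraph in your proposal is a detour the paper does not take. You first invoke Theorem~\ref{thm:A.main}(ii) to write $\gr^{c_i}\kqn$ as the cokernel of $1+aC$ on $Z_{n-i}^{q-1}\oplus Z_{n-i}^{q-2}$ and then set out to prove this operator is surjective over separably closed $k$, linking it to vanishing of Galois cohomology. This is not wrong, but it duplicates work already packaged inside the Bloch--Kato reference: the paper treats $\gr^{e+e_0}k_{q,1}=0$ as a black box and never unpacks the differential-forms description. Note also a small imprecision in your reduction: the quotient in Theorem~\ref{thm:A.main}(ii) is $\Omega^{q-1}/(1+aC)Z_{n-i}^{q-1}$, so what you must show is that $(1+aC)$ maps $Z_{n-i}^{q-1}$ \emph{onto} $\Omega_k^{q-1}$, not merely that $1+aC$ is surjective as an endomorphism of $\Omega_k^{q-1}$. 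Your route buys an explicit explanation of \emph{why} the boundary piece vanishes; the paper's route buys brevity by deferring that to \cite{BK86}.
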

\begin{proof}
The assertion follows from the fact $\gr^{e+e_0}k_{q,1} = 0$ (\cite{BK86}, Lem.\ 5.1 (ii)), 
Lemma \ref{lem:m>pe0}, 
and the induction on $n$. 
\end{proof}



\providecommand{\bysame}{\leavevmode\hbox to3em{\hrulefill}\thinspace}
\providecommand{\href}[2]{#2}

\noindent
Toshiro Hiranouchi \\
Department of Mathematics, Graduate School of Science, Hiroshima University\\
1-3-1 Kagamiyama, Higashi-Hiroshima, 739-8526 Japan\\
Email address: {\tt hira@hiroshima-u.ac.jp}

\medskip\noindent
Seiji Hirayama \\
Faculty of Mathematics, Kyushu University \\
744, Motooka, Nishi-ku, Fukuoka, 819-0395 Japan\\
Email address: {\tt s-hirayama@math.kyushu-u.ac.jp}

\end{document}